\documentclass[review,onefignum,onetabnum]{siamart171218}


\usepackage{lipsum}
\usepackage{float}
\usepackage{mathrsfs}
\usepackage{amsfonts}
\usepackage{graphicx}
\usepackage{epstopdf}
\usepackage{algorithmic}
\usepackage{amssymb}
\usepackage{amsmath}


\ifpdf
  \DeclareGraphicsExtensions{.eps,.pdf,.png,.jpg}
\else
  \DeclareGraphicsExtensions{.eps}
\fi


\newsiamremark{hypothesis}{Hypothesis}
\crefname{hypothesis}{Hypothesis}{Hypotheses}
\newsiamthm{claim}{Claim}
\newtheorem{remark}{Remark}
\nolinenumbers

\headers{Legendre and Conical functions}{T. M. Dunster}


\title{Simplified uniform asymptotic expansions for associated Legendre and conical functions}


\author{T. M. Dunster\thanks{Department of Mathematics and Statistics, San Diego State University, 5500 Campanile Drive, San Diego, CA 92182-7720, USA. 
  (\email{mdunster@sdsu.edu}, \url{https://tmdunster.sdsu.edu}).}}

\usepackage{amsopn}

\makeatletter
\newcommand*{\addFileDependency}[1]{
  \typeout{(#1)}
  \@addtofilelist{#1}
  \IfFileExists{#1}{}{\typeout{No file #1.}}
}
\makeatother


\nolinenumbers

\ifpdf
\hypersetup{
  pdftitle={Simplified uniform asymptotic expansions for associated Legendre and conical functions},
  pdfauthor={T. M. Dunster}
}

\begin{document}

\maketitle

\begin{abstract}
Asymptotic expansions are derived for associated Legendre functions of degree $\nu$ and order $\mu$, where one or the other of the parameters is large. The expansions are uniformly valid for unbounded real and complex values of the argument $z$, including the singularity $z=1$. The cases where $\nu+\frac12$ and $\mu$ are real or purely imaginary are included, which covers conical functions. The approximations involve either exponential or modified Bessel functions, along with slowly varying coefficient functions. The coefficients of the new asymptotic expansions are simple and readily obtained explicitly, allowing for computation to a high degree of accuracy. The results are constructed and rigorously established by employing certain Liouville-Green type expansions where the coefficients appear in the exponent of an exponential function.
\end{abstract}

\begin{keywords}
  {Legendre functions, WKB methods, Simple poles, Asymptotic expansions}
\end{keywords}

\begin{AMS}
  33C05, 34E20, 34E05, 34M30
\end{AMS}

\section{Introduction} 
\label{sec1}

The associated Legendre functions of the first and second kinds have the explicit representation (see, for example, \cite[Eqs. 14.3.6, 14.3.7, 14.3.10]{NIST:DLMF})
\begin{equation}
\label{eq1.1}
P^{-\mu}_{\nu}(z)=
\left(\frac{z-1}{z+1}\right)^{\mu/2}
\mathbf{F}\left(\nu+1,-\nu;1+\mu;
\tfrac{1}{2}-\tfrac{1}{2}z\right),
\end{equation}
and
\begin{equation}
\label{eq1.1a}
\boldsymbol{Q}^{\mu}_{\nu}(z)
=\frac{\sqrt{\pi}
\left(z^{2}-1\right)^{\mu/2}}
{2^{\nu+1}z^{\nu+\mu+1}}
\mathbf{F}\left(
\tfrac{1}{2}\nu+\tfrac{1}{2}\mu+1,
\tfrac{1}{2}\nu+\tfrac{1}{2}\mu+\tfrac{1}{2}
;\nu+\tfrac{3}{2};\frac{1}{z^{2}}\right),
\end{equation}
where $\mathbf{F}$ is Olver's scaled hypergeometric function defined by
\begin{equation}
\label{eq1.12}
\textbf{F}(a,b;c;z)=
\frac{F(a,b;c;z)}{\Gamma(c)}
=\sum_{s=0}^{\infty}\frac{(a)_{s}(b)_{s}}
{\Gamma(c+s)}\frac{z^{s}}{s!}.
\end{equation}

Principal branches are taken for $z>1$, and the functions are defined by continuity elsewhere in the plane having a cut along $(-\infty,1]$, and these functions are generally complex-valued on the cut. Ferrers functions are real-valued for $z=x \in (-1,1)$, and are given by (see \cite[Eqs. 14.23.4 and 14.23.5]{NIST:DLMF})
\begin{equation}
\label{eq5.1}
\mathsf{P}_{\nu}^{-\mu}(x)
=e^{\mp \mu \pi i/2}
P_{\nu}^{-\mu}(x \pm i0),
\end{equation}
and
\begin{equation}
\label{eq5.2}
\mathsf{Q}_{\nu}^{-\mu}(x)
=\tfrac12 \Gamma(\nu-\mu+1)\left\{e^{\mu \pi i/2}
\boldsymbol{Q}_{\nu}^{\mu}(x +i0)
+e^{-\mu \pi i/2}\boldsymbol{Q}_{\nu}^{\mu}(x -i0)\right\},
\end{equation}
on using the standard notation $f(x \pm i0)=\lim_{\epsilon \to 0+}f(x \pm i\epsilon)$.

All these functions are solutions of the associated Legendre differential equation
\begin{equation}
\label{eq1.2x}
(z^2-1)\frac{d^2 y}{d z^2} +2z\frac{dy}{dz} 
-\left\{\nu(\nu + 1)+\frac{\mu^2}{z^2-1}\right\}y=0,
\end{equation}
which has regular singularities at $z=\pm 1$ and $z=\infty$.

Legendre functions arise in various problems involving spherical harmonics and potential theory (see \cite[Sec. 14.30]{NIST:DLMF}), and as such the study of these functions, particularly in the context of large parameters, has garnered significant attention.  As the parameters \( \nu \) and \( \mu \) become large, understanding their asymptotic behavior is important for both theoretical insight and computation.

Previous research in this field has resulted in numerous asymptotic approximations and expansions for various parameter regimes. For example, uniform asymptotic approximations for large $\nu$ and fixed $\mu$ were derived in \cite{Boyd:1990:CSP}, \cite{Frenzen:1990:EBU}, \cite{Jones:2001:AHF}, \cite[Chap. 12, Secs. 12 and 13]{Olver:1997:ASF}, \cite{Shivakumar:1988:EBU}, \cite[Chap. 29]{Temme:2015:AMF}, \cite{Ursell:1984:ILP}, and \cite[Chap. VII, Sec. 8]{Wong:1989:AAI}, with convergent series expansions given in \cite{Dunster:2004:CSP}. For large $\mu$ and fixed $\nu$ see \cite{Dunster:2003:ALF}, \cite{Gil:2000:CTF} and \cite[Chap. 29]{Temme:2015:AMF}. More recently, simple (inverse) factorial expansions (with error bounds) are constructed in \cite{Nemes:2020:LPA} for one of the degree or order large, valid for unbounded $z$ but not at or near the pole $z=1$ of (\ref{eq1.2x}).

Conical functions are associated Legendre functions with the complex-valued degree $\nu=-\frac12+i\tau$ ($\tau \in \mathbb{R}$), and have a number of important applications, most notably in solving Laplace's equation expressed in toroidal coordinates \cite[Chap. 7]{Sneddon:1972:UIT}. They also appear in the kernels of the Mehler-Fock transforms \cite[Chap. 7]{Sneddon:1972:UIT} and solutions of the Helmholtz equation in geodesic polar coordinates \cite{Cohl:2018:FSG}. Asymptotic expansions for these functions have been given by \cite{Cohl:2018:FSG}, \cite{Dunster:2013:CFP}, and \cite[pp. 473–474]{Olver:1997:ASF}. Further, in \cite{Dunster:1991:CFW} expansions are given which are valid for one or both of the parameters being allowed to be large, these being uniformly valid in unbounded domains that contain $z=1$.

The case where both parameters are permitted to be large is generally more complicated as it involves coalescing turning points and poles. Asymptotic approximations and expansions for associated Legendre and Ferrers functions were constructed in such cases by \cite{Boyd:1986:TPS}, \cite{Dunster:2003:ALF} and \cite{Olver:1975:LFW}, for large $\mu$ with $\nu/\mu \in [0,1-\delta]$, large $\mu$ with $\nu/\mu \in [0,1-\delta]$, and large $\nu$ with $\mu/\nu \in [\delta,\delta^{-1}]$, respectively. \emph{Here and throughout this paper $\delta$ represents a generic arbitrarily small positive constant.}

Existing asymptotic expansions typically have coefficients that are not easy to evaluate, or are valid for a restricted range of the argument, such as $z$ being bounded away from $1$ or lying in a bounded domain. An exception is the case where $\mu \to \infty$ and $z=x \in [0,1)$, in which Liouville-Green (LG) expansions are valid for small to large $|\nu|$ (see \cite[Sec. 5]{Dunster:1991:CFW} and \cite[Sec. 4]{Dunster:2003:ALF}) and have coefficients which are straightforward to evaluate. The reason in this case is that the differential equation in question is free from turning points (where LG expansions break down) and the singularities of the approximating differential equation involves a double pole at the end points $x=\pm 1$, as opposed to a simple pole which is the case for the other parameter regimes: under appropriate conditions LG expansions are valid at the former but generally are not so at the latter. This will be expanded upon further below.

In this paper we provide new asymptotic expansions for associated Legendre, conical and Ferrers functions with one of the parameters large and the other bounded, with the exceptional case as described above where both can be large. We consider real and complex values of the argument $z$, in both cases our expansions are uniformly valid at the singularity $z=1$, and in the complex case also for $z$ unbounded. The significance is that our coefficients appear in the arguments of certain exponential, hyperbolic or trigonometric functions, which results in simple expressions for them which only involve recursively defined polynomials and the LG variable. Another advantage, as shown in \cite[Sec. 5]{Dunster:2024:AEG}, is that construction of error bounds only requires the use of the relatively simple ones for LG type expansions constructed in \cite{Dunster:2020:LGE}. In particular error bounds using this method does not require estimates involving the higher functions (here modified Bessel functions) that appear in the expansions.

\emph{In this paper we assume $\Re(\mu) \geq 0$ and $\Re(\nu) \geq -\frac12$}. Extensions to other parameter ranges are easily obtained via appropriate connection formulas (see for example \cite[Sec. 14.9]{NIST:DLMF}). Also, we consider the half plane $|\arg(z)|\leq \frac12 \pi$, or in the real variable case $z=x \in (-1,1)$ (in conjunction with connection formulas for negative $x$ in the large $\nu$ case). For complex $z$ extension to the left half-plane and other sheets follows readily from \cite[Eqs. 14.24.1	and 14.24.2]{NIST:DLMF}  
\begin{equation}
\label{eq2.22b}
P^{-\mu}_{\nu}\left(ze^{s\pi i}\right)
=e^{s\nu\pi i}P^{-\mu}_{\nu}(z)
+\frac{2i\sin\left(\left(\nu+\frac{1}{2}\right)
s\pi\right)e^{-s\pi i/2}
}{\cos(\nu\pi)\Gamma(\mu-\nu)}
\boldsymbol{Q}^{\mu}_{\nu}(z),
\end{equation}
and
\begin{equation}
\label{eq2.22x}
\boldsymbol{Q}^{\mu}_{\nu}
\left(ze^{s\pi i}\right)
=(-1)^{s}e^{-s\nu\pi i}
\boldsymbol{Q}^{\mu}_{\nu}(z).
\end{equation}

In addition to the associated Legendre and Ferrers functions defined above, we also will approximate analytic continuations of $\boldsymbol{Q}^{\mu}_{\nu}(z)$ denoted by
\begin{equation}
\label{eq2.22y}
\boldsymbol{Q}^{\mu}_{\nu,\pm 1}(z)
=\boldsymbol{Q}^{\mu}_{\nu}
\left(1+(z-1)e^{\pm 2\pi i}\right),
\end{equation}
i.e. the branches of $\boldsymbol{Q}^{\mu}_{\nu}(z)$ obtained from the principal branch of the function by encircling the branch point $z=1$ (but not the branch point $z=-1$) in the positive (respectively negatively) sense. From \cite[Eq. 14.24.4]{NIST:DLMF} these can be expressed in terms of the other functions by
\begin{equation}
\label{eq2.22e}
\boldsymbol{Q}^{\mu}_{\nu,\pm 1}(z)
=e^{\mp \mu\pi i}\boldsymbol{Q}^{\mu}_{\nu}(z)
\mp \frac{\pi i}{\Gamma(\nu-\mu+1)}
P^{-\mu}_{\nu}(z),
\end{equation}
which in turn implies
\begin{equation}
\label{eq2.22ef}
\cos(\mu \pi)P^{-\mu}_{\nu}(z)
=\tfrac12 \Gamma(\nu-\mu+1)
\left\{e^{(\mu+\frac12 )\pi i}
\boldsymbol{Q}^{\mu}_{\nu,1}(z)
-e^{-(\mu+\frac12 ) \pi i}
\boldsymbol{Q}^{\mu}_{\nu,-1}(z)\right\}.
\end{equation}

We also have the relations
\begin{equation}
\label{eq2.22m}
\frac{2\sin(\mu\pi)}{\pi}\boldsymbol{Q}^{\mu}_{\nu}(z)
=\frac{P^{\mu}_{\nu}(z)}
{\Gamma\left(\nu+\mu+1\right)}
-\frac{P^{-\mu}_{\nu}(z)}
{\Gamma\left(\nu-\mu+1\right)},
\end{equation}
\begin{equation}
\label{eq2.22f}
\cos(\mu \pi)\boldsymbol{Q}^{\mu}_{\nu}(z)
=\tfrac12\left\{\boldsymbol{Q}^{\mu}_{\nu,1}(z)
+\boldsymbol{Q}^{\mu}_{\nu,-1}(z)\right\},
\end{equation}
and
\begin{equation}
\label{eq2.22g}
\frac{2\pi i \cos(\mu \pi)}{\Gamma(\nu+\mu+1)}
P^{\mu}_{\nu}(z)
=e^{\mu \pi i}\boldsymbol{Q}^{\mu}_{\nu,-1}(z)
-e^{-\mu \pi i}\boldsymbol{Q}^{\mu}_{\nu,1}(z).
\end{equation}

The fundamental behaviour of these functions at the singularities is given by (see for example \cite[Eqs. 14.8.7 and 14.8.15]{NIST:DLMF})
\begin{equation}
\label{eq1.1b}
P^{-\mu}_{\nu}(z)=
\frac{1}{\Gamma(\mu+1)}
\left(\frac{z-1}{2}\right)^{\mu/2}
\left\{1+\mathcal{O}(z-1)\right\}
\quad (z \rightarrow 1),
\end{equation}
and
\begin{equation}
\label{eq1.1c}
\boldsymbol{Q}^{\mu}_{\nu}(z)
=\frac{\sqrt{\pi}}
{\Gamma\left(\nu+\frac{3}{2}\right)
(2z)^{\nu+1}}
\left\{1+\mathcal{O}
\left(\frac{1}{z}\right)\right\}
\quad \left(z \rightarrow \infty, \,
\nu \neq -\tfrac{3}{2}, -\tfrac{5}{2}, -\tfrac{7}{2}, \ldots \right);
\end{equation}
for the exceptional cases see \cite[Eq. 14.8.16]{NIST:DLMF}. These limiting forms show that the two functions are recessive at the respective singularities, and they form a numerically satisfactory pair of solutions of (\ref{eq1.2x}) in the half-plane $|\arg(z)|\leq \frac12 \pi$ when $\Re(\mu) \geq 0$ and $\Re(\nu) \geq -\frac12$ (see \cite[Chap. 5, Thm. 12.1]{Olver:1997:ASF}).

Consider now the asymptotic behaviour of solutions of (\ref{eq1.2x}) as $\nu \to \infty$. To do so we follow \cite[Chap. 12, Secs. 12 and 13]{Olver:1997:ASF} and let
\begin{equation}
\label{eq2.1}
w(z)=\left(z^2-1\right)^{1/2}y(z),
\end{equation}
and
\begin{equation}
\label{eq2.2}
u=\nu+\tfrac{1}{2}.
\end{equation}
Then (\ref{eq1.2x}) becomes
\begin{equation}
\label{eq2.3}
w''(z)=\left\{u^2f(z)+g(\mu,z)\right\}w(z),
\end{equation}
where
\begin{equation}
\label{eq2.4}
f(z)=\frac{1}{z^2-1}, \;
g(\mu,z)=-\frac{1}{4 \left ( z^2-1 \right )}
+\frac{\mu^2 -1}
{\left ( z^2-1 \right )^2}.
\end{equation}
Here the choice of large parameter $u=\nu+\frac12$, as opposed to just simply $\nu$, results in the correct form of the accompanying function $g(\mu,z)$, which then ensures that the subsequent asymptotic approximations are valid at $z= \infty$; see \cite[Chap. 10, Ex. 4.1]{Olver:1997:ASF}.

The function $f(z)$, which is the principal term in approximations when $u$ is large, has simple poles at $z= \pm 1$, and typically LG expansions break down at such points (see \cite[Chap. 10, Sec. 4]{Olver:1997:ASF}). Instead, expansions for Legendre functions when $u$ is large that are valid at the pole are provided by \cite[Chap. 12, Secs. 12 and 13]{Olver:1997:ASF}, and these involve modified Bessel functions. The disadvantage is that the coefficients in the expansions are hard to compute since they are not explicitly given, and instead require nested integrations in their evaluation.

Instead we use the method and some results of \cite{Dunster:2024:AEG} in which the closely related Gegenbauer  and companion functions were studied. In that paper, similar to \cite{Boyd:1990:CSP} and \cite{Dunster:2004:CSP}, exact solutions involving modified Bessel functions and two slowly-varying coefficient functions were defined. The difference in \cite{Dunster:2024:AEG} as compared to \cite{Boyd:1990:CSP} and \cite{Dunster:2004:CSP} is that LG expansions of exponential form were used to construct asymptotic expansions for the coefficient functions, and these in turn involve coefficients in their asymptotic expansions that are straightforward to evaluate explicitly. Indeed a similar method was first employed for turning point expansions involving Airy functions in \cite{Dunster:2017:COA}, with error bounds given in \cite{Dunster:2021:SEB}. In those papers Cauchy's integral formula was used to evaluate the approximations in a full neighborhood of a turning point, as well as bound the error terms there, and similarly in \cite{Dunster:2024:AEG} in a neighborhood of a simple pole.

For brevity explicit error bounds are not presented here, although they can readily be constructed using those given in \cite[Sec. 5]{Dunster:2024:AEG} which utilised the LG error bounds given in \cite{Dunster:2020:LGE}. Indeed these bounds rigorously establish the uniform validity of all the expansions in this paper, without having to resort to the more complicated error analysis involving so-called weight, modulus and phase functions for Bessel functions (see \cite[Chap. 12, Secs. 1 and 8]{Olver:1997:ASF}).

The plan of this paper is as follows. In \cref{sec.LargeNu} we obtain expansions for $\nu$ large and positive, with $\mu$ bounded and real or complex. These are obtained from identifying associated Legendre functions with the Gegenbauer and related functions given in \cite{Dunster:2024:AEG}. The extension to the conical case $\nu =-\frac12+i \tau$ with $\tau \to \infty$ is considered in \cref{sec.LargeTau}, with again with $\mu$ bounded and real or complex, and expansions for $\mu$ large and positive, with $\nu$ bounded and real or complex (including conical functions) are derived in \cref{sec.LargeMu}. In all these cases the expansions are uniformly valid for $|\arg(z)|\leq \frac12 \pi$, $|z-1| \geq \delta>0$, with extension to this excluded neighbourhood of the pole at $z=1$ achieved by Cauchy's integral formula as described above.

The case $z=x \in (-1,1)$ is tackled in \cref{sec.Ferrers}. Firstly, expansions for Ferrers and conical functions are derived that are uniformly valid in the interval $[0,1)$ for $|\nu|$ large, with $\mu \geq 0$ bounded; extension to $(-1,0)$ follows from appropriate well-known connection formulas. Then we consider $\mu \to \infty$, both for $\nu \in \mathbb{R}$ such that $0 \leq \mu/\nu \leq 1-\delta$, and $\nu = -\frac12 + i \tau$ with $\tau >0$ and $0 \leq |\mu/\nu| \leq \alpha_{0}$ ($0<\alpha_{0}<\infty$). 

Finally, some numerical results to illustrate the accuracy of our expansions (of real and complex arguments) are presented in \cref{sec.numerics}.

\section{Large positive \texorpdfstring{$\nu$}{} with \texorpdfstring{$\mu$}{} bounded}
\label{sec.LargeNu}

From \cite[Eq. 14.3.22]{NIST:DLMF}
\begin{equation}
\label{eq1.1e}
P^{-\mu}_{\nu}(z)=\frac{\Gamma\left(2\mu+1\right)
\Gamma\left(\nu-\mu+1\right)\left(z^{2}-1\right)^{\mu/2}}
{2^{\mu}\Gamma\left(\mu+1\right)
\Gamma\left(\nu+\mu+1\right)}
C^{(\mu+\frac{1}{2})}_{\nu-\mu}(z),
\end{equation}
where $C_{n}^{(\lambda)}(z)$ is the Gegenbauer function which can be expressed in terms of the hypergeometric function \cite[Eqs. 15.9.2, 18.5.10]{NIST:DLMF}. When $n$ is a non-negative integer it reduces to the Gegenbauer (or ultraspherical) polynomial 
\begin{equation*}
\label{eq2.1a}
C^{(\lambda)}_{n}(z)=
\sum_{k=0}^{\left \lfloor n/2\right \rfloor}(-1)^{k}
\frac{\Gamma (n-k+\lambda )}
{\Gamma (\lambda )k!(n-2k)!}(2z)^{n-2k}
\quad (n=0,1,2,\ldots).
\end{equation*}
On account of (\ref{eq1.1e}) and \cite[Eqs. (1.1), (1.9) and (2.2)]{Dunster:2024:AEG} we replace in that paper $\nu \mapsto \mu$, $\lambda \mapsto \mu+\frac12$, $n \mapsto \nu - \mu$ and $u \mapsto \nu +\frac12$.

In terms of a companion function $D_{n}^{(\lambda)}(z)$ given by \cite[Eqs. (1.12) and (1.13)]{Dunster:2024:AEG} we also have from \cite[Eq. 14.3.19]{NIST:DLMF} the relationship
\begin{equation}
\label{eq1.1f}
\boldsymbol{Q}^{\mu}_{\nu}(z)
=\frac{2^{\mu}\Gamma\left(\mu+\tfrac12\right)
\left(z^{2}-1\right)^{\mu/2}}
{\sqrt{\pi}\Gamma\left(\nu+\mu+1\right)}
D^{(\mu+\frac{1}{2})}_{\nu-\mu}(z).
\end{equation}

From \cite[Eqs. (2.4) and (2.13)]{Dunster:2024:AEG} (see also (\ref{eq2.4})) we next define the Liouville variable by
\begin{equation}
\label{eq2.6}
\xi =\int_{1}^{z} f^{1/2}(t) dt
=\ln\left\{z+(z^2-1)^{1/2}\right\}
=\mathrm{arccosh}(z),
\end{equation}
which plays a central role in the subsequent asymptotic expansions. The branches are such that $\xi>0$ for $z>0$ with $\arg(z)=0$, and by continuity elsewhere in the $z$ plane having a cut along $(-\infty,1]$. Now it is straightforward to show that
\begin{equation}
\label{eq2.6aa}
\xi =\sqrt{2} (z - 1)^{1/2}- \tfrac{1}{12}\sqrt{2}(z - 1)^{3/2}
+\mathcal{O}\left\{(z - 1)^{5/2}\right\}
\quad (z \to 1),
\end{equation}
and
\begin{equation}
\label{eq2.6a}
\xi =\ln(2z)-\tfrac{1}{4}z^{-2}
+\mathcal{O}\left(z^{-4}\right)
\quad (z \to \infty).
\end{equation}

\begin{figure}[h!]
 \centering
 \includegraphics[
 width=0.7\textwidth,keepaspectratio]{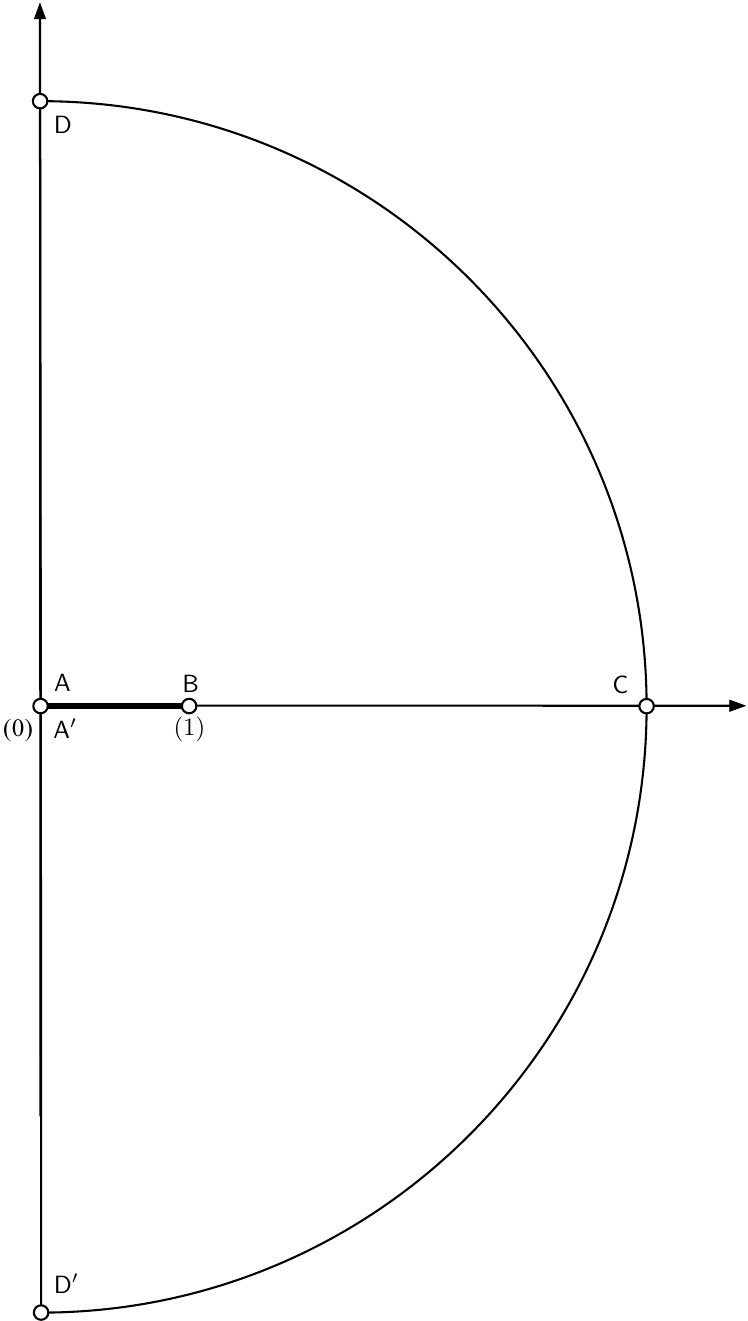}
 \caption{$z$ plane.}
 \label{fig:Figzplane}
\end{figure}

\begin{figure}[h!]
 \centering
 \includegraphics[
 width=\textwidth,keepaspectratio]{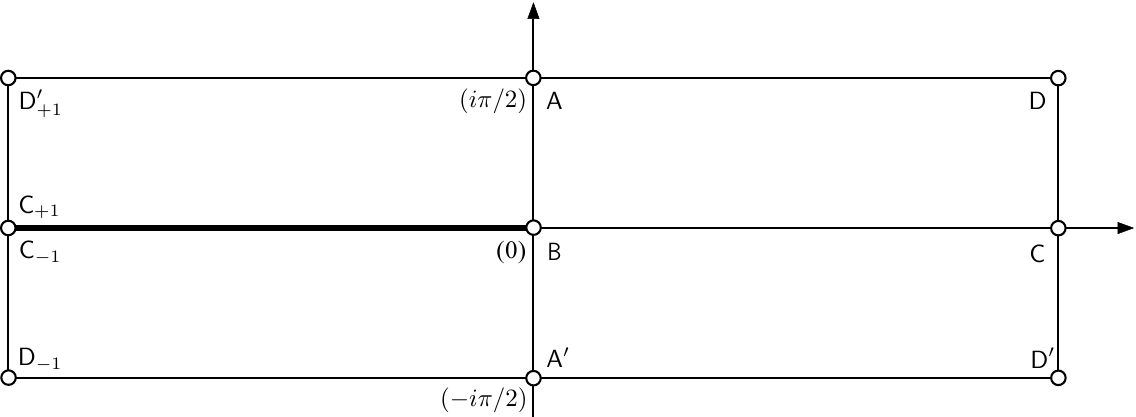}
 \caption{$\xi$ plane.}
 \label{fig:Figxiplane}
\end{figure}

In due course we shall need a more detailed description of the $z-\xi$ map, which is provided as follows. The half-plane $|\arg(z)|\leq \frac12 \pi$ with a cut along $[0,1]$ is depicted in \cref{fig:Figzplane}, with corresponding points in the $\xi$ plane shown in \cref{fig:Figxiplane}. In the former, the points $\mathsf{C}$, $\mathsf{D}$ and $\mathsf{D}'$ lie on a semi-circle of arbitrary large radius, so that the corresponding points in the $\xi$ plane have arbitrary large and positive real part, with $|\Im(\xi)| \leq \frac12 \pi$; see (\ref{eq2.6a}).

Also in \cref{fig:Figxiplane} the point $\mathsf{C}_{+1}$ corresponds to the point $\mathsf{C}$ on the sheet in the $z$ plane when accessed across the cut from above (i.e. from encircling the branch point $z=1$ in a positive sense). Likewise $\mathsf{C}_{-1}$ in the $\xi$ plane corresponds to the point $\mathsf{C}$ on the sheet in the $z$ plane when accessed from below the cut, and similarly for $\mathsf{D}_{+1}'$ and $\mathsf{D}_{-1}$. Due to the corresponding points in the $z$ plane being arbitrarily large these are considered to have real parts that are arbitrarily large in absolute value.

In summary, the half-plane $|\arg(z)|\leq \frac12 \pi$ is mapped to the strip $0 \leq \Re(\xi) < \infty$, $|\Im(\xi)| \leq \frac12 \pi$. Moreover, the first quadrant on the sheet in $z$ plane accessed by crossing the cut $[0,1]$ from below is mapped to the the strip $-\infty < \Re(\xi) \leq 0$, $-\frac12 \pi \leq \Im(\xi) \leq 0$, and the fourth quadrant on the sheet in $z$ plane accessed by crossing the cut $[0,1]$ from above is mapped to the the strip $-\infty < \Re(\xi) \leq 0$, $0 \leq \Im(\xi) \leq \frac12 \pi$.

Next we define the coefficients that appear in our expansions. From \cite[Eqs. (2.18) - (2.24)]{Dunster:2024:AEG} let
\begin{equation}
\label{eq2.12}
\beta=z \left(z^2 - 1\right)^{-1/2},
\end{equation}
and then define $F_{\mu,s}(\beta)$ ($s=1,2,3,\ldots$) as polynomials in $\beta$ given recursively by
\begin{equation}
\label{eq2.14}
F_{\mu,1}(\beta )
=\tfrac{1}{8}\left (4\mu^2-1 \right)\left (\beta^2-1 \right),
\end{equation}
\begin{equation}
\label{eq2.15}
F_{\mu,2}(\beta)=\tfrac{1}{8}
\left( 4\mu^2-1\right)\beta\left(\beta^2-1\right),
\end{equation}
and
\begin{equation}
\label{eq2.16}
F_{\mu,s+1}(\beta)
=\frac{1}{2}\left(\beta^2-1\right)
\frac{dF_{\mu,s}(\beta)}{d\beta}-{\frac{1}{2}}
\sum\limits_{j=1}^{s-1}
F_{\mu,j}(\beta)F_{\mu,s-j}(\beta)
\quad (s=2,3,4,\ldots).
\end{equation}
Each $F_{\mu,s}(\beta)$ is divisible by $\beta^2-1$, as can be seen from (\ref{eq2.14}) - (\ref{eq2.16}) and using induction. 

Note from (\ref{eq2.4}), (\ref{eq2.6}) and (\ref{eq2.12}) that
\begin{equation}
\label{eq2.13a}
\xi=\frac12 \ln\left(
\frac{\beta+1}{\beta-1}\right),
\end{equation}
and
\begin{equation}
\label{eq2.13}
\frac{d\xi}{d\beta}
=\frac{d\xi}{d z}\left(\frac{d \beta}{d z}\right)^{-1}
=-\frac{1}{\beta^2-1}.
\end{equation}
Then from (\ref{eq2.13}) and \cite[Eq. (1.10)]{Dunster:2020:LGE} the required LG coefficients $E_{\mu,s}(\beta)$ ($s=1,2,3,\ldots$), which are also polynomials in $\beta$, are given by (c.f. \cite[Eq. (2.24)]{Dunster:2024:AEG})
\begin{equation}
\label{eq2.17}
E_{\mu,s}(\beta) =-\int_{0}^{\beta}
\frac{F_{\mu,s}(b)}{b^2-1} d b
\quad (s=1,2,3,\ldots).
\end{equation}
Here we have chosen the arbitrary integration constants so that $E_{s}(0)=0$. Observe that $E_{\mu,2s}(\beta)$ are even and $E_{\mu,2s+1}(\beta)$ are odd functions of $\beta$. The latter property means that, regarded as functions of $z$ via (\ref{eq2.12}), each $(z-1)^{1/2}E_{\mu,2s-1}(\beta)$ ($s=1,2,3,\ldots$) is a meromorphic function at $z=1$, and this is a requirement for our expansions to be valid in a punctured neighbourhood of $z=1$.

The first two coefficients are given by
\begin{equation}
\label{eq2.17a}
E_{\mu,1}(\beta) =-\tfrac18
\left(4\mu^2-1\right) \beta, \;
E_{\mu,2}(\beta) =-\tfrac{1}{16}
\left(4\mu^2-1\right) \beta^2.
\end{equation}

We are now in a position to state the main result of this section.

\begin{theorem}
\label{thm:nularge}

Let $u=\nu+\frac12$, $\xi$ be given by (\ref{eq2.6}), and
\begin{equation}
\label{eq3.24L}
L_{\nu}^{\mu}=
\frac{\sqrt{(2\nu+1)\pi}}
{2^{\nu+1}\Gamma \left(\frac12 \nu + \frac12 \mu +1\right)
\Gamma \left(\frac12 \nu - \frac12 \mu +1\right)}.
\end{equation}
Next, in terms of the modified Bessel functions \cite[Sec. 10.25]{NIST:DLMF}, let
\begin{multline}
\label{eq3.24e}
P_{\nu}^{-\mu}(z)
=L_{\nu}^{\mu} \, \Gamma(\nu-\mu+1)
 \left(z^2-1\right)^{-1/4}
\\ \times
 \xi^{1/2} \left\{I_{\mu}(u \xi)A_{\mu}(u,z)
+\xi I_{\mu+1}(u \xi)B_{\mu}(u,z)\right\},
\end{multline}
\begin{equation}
\label{eq3.24ee}
\boldsymbol{Q}^{\mu}_{\nu}(z)
=L_{\nu}^{\mu} \, 
\left(z^2-1\right)^{-1/4} \xi^{1/2}
\left\{K_{\mu}(u \xi)A_{\mu}(u,z)
-\xi K_{\mu+1}(u \xi)B_{\mu}(u,z)\right\},
\end{equation}
\begin{multline}
\label{eq3.25}
P_{\nu}^{\mu}(z)
=L_{\nu}^{\mu} \, \Gamma(\nu+\mu+1)
 \left(z^2-1\right)^{-1/4}
\\ \times
 \xi^{1/2} \left\{I_{-\mu}(u \xi)A_{\mu}(u,z)
+ \xi I_{-\mu-1}(u \xi)B_{\mu}(u,z)\right\},
\end{multline}
and
\begin{multline}
\label{eq9.1a}
\boldsymbol{Q}^{\mu}_{\nu,\pm 1}(z)
=L_{\nu}^{\mu} \, 
\left(z^2-1\right)^{-1/4} 
\\ \times
\xi^{1/2}
\left\{K_{\mu}(u \xi e^{\pm \pi i})A_{\mu}(u,z)
+\xi K_{\mu+1}(u \xi e^{\pm \pi i})B_{\mu}(u,z)\right\},
\end{multline}
where the fractional powers take positive values when $z>1$ ($\xi >0$) and are defined by continuity elsewhere. Then $A_{\mu}(u,z)$ and $B_{\mu}(u,z)$ are analytic for $|\arg(z)| \leq \frac12 \pi$, are real for $\nu,\mu \in \mathbb{R}$ and $z \in (-1,\infty)$, and for $\mu$ bounded with $\Re(\mu) \geq 0$ possess the asymptotic expansions 
\begin{equation}
\label{eq3.24f}
A_{\mu}(u,z) \sim 
\exp \left\{
\sum\limits_{s=1}^{\infty}
\frac{\tilde{\mathcal{E}}_{\mu,2s}(z)}{u^{2s}}
\right\}
\cosh \left\{ \sum\limits_{s=0}^{\infty }
\frac{\tilde{\mathcal{E}}_{\mu,2s+1}(z)}{u^{2s+1}}
\right\},
\end{equation}
and
\begin{equation}
\label{eq3.24g}
B_{\mu}(u,z) \sim 
\frac{1}{\xi}\exp \left\{
\sum\limits_{s=1}^{\infty}
\frac{\mathcal{E}_{\mu,2s}(z)}{u^{2s}}
\right\}
\sinh \left\{ \sum\limits_{s=0}^{\infty }
\frac{\mathcal{E}_{\mu,2s+1}(z)}{u^{2s+1}}
\right\},
\end{equation}
as $u \rightarrow \infty$, uniformly for $|\arg(z)| \leq \frac12 \pi$ such that $|z-1| \geq \delta>0$. Here, for $s=1,2,3,\ldots$,
\begin{equation}
\label{eq3.34}
\mathcal{E}_{\mu,s}(z)
=E_{\mu,s}(\beta)+(-1)^{s+1}\frac{a_{\mu,s}}
{s\xi^{s}},
\end{equation}
\begin{equation}
\label{eq3.34a}
\tilde{\mathcal{E}}_{\mu,s}(z)
=E_{\mu,s}(\beta)+(-1)^{s+1}\frac{a_{\mu+1,s}}
{s\xi^{s}},
\end{equation}
in which $E_{\mu,s}(\beta)$ are polynomials in $\beta$ defined by (\ref{eq2.12}) - (\ref{eq2.16}) and (\ref{eq2.17}),
\begin{equation} 
\label{eq2.31}
a_{\mu,1} =  a_{\mu,2}
=\tfrac{1}{8}
\left(4\mu^{2}-1\right),
\end{equation}
and
\begin{equation} 
\label{eq2.32}
a_{\mu,s+1}=\frac{1}{2}(s+1)a_{\mu,s}
-\frac{1}{2}
\sum_{j=1}^{s-1}
a_{\mu,j}a_{\mu,s-j}.
\end{equation}
\end{theorem}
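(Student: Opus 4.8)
The plan is to reduce Theorem~\ref{thm:nularge} to the Gegenbauer-function results of \cite{Dunster:2024:AEG} via the identifications already set up in the excerpt, and then to carry out the corresponding identifications for the Bessel-type representations. First I would invoke (\ref{eq1.1e}) and (\ref{eq1.1f}), which express $P^{-\mu}_{\nu}(z)$ and $\boldsymbol{Q}^{\mu}_{\nu}(z)$ in terms of the Gegenbauer function $C^{(\mu+1/2)}_{\nu-\mu}(z)$ and its companion $D^{(\mu+1/2)}_{\nu-\mu}(z)$, and apply the substitution $\lambda \mapsto \mu+\tfrac12$, $n \mapsto \nu-\mu$, $u \mapsto \nu+\tfrac12$ dictated just before (\ref{eq1.1f}). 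Under this substitution the Liouville variable (\ref{eq2.6}) and the quantity $\beta$ in (\ref{eq2.12}) coincide with those of \cite{Dunster:2024:AEG}, as do the polynomials $F_{\mu,s}(\beta)$ via (\ref{eq2.14})--(\ref{eq2.16}) and the LG coefficients $E_{\mu,s}(\beta)$ via (\ref{eq2.17}); this is exactly the content of the parenthetical cross-references in the text. So the coefficient functions $A_{\mu}(u,z)$, $B_{\mu}(u,z)$ are, up to the elementary prefactors, precisely the slowly-varying coefficient functions of \cite{Dunster:2024:AEG}, and their asymptotic expansions (\ref{eq3.24f})--(\ref{eq3.24g}), together with the recursions (\ref{eq2.31})--(\ref{eq2.32}) for $a_{\mu,s}$ and the formulas (\ref{eq3.34})--(\ref{eq3.34a}), are inherited verbatim from that paper with $\mu\mapsto\mu+\tfrac12$ inserted in the appropriate place (note the shift $a_{\mu,s}\to a_{\mu+1,s}$ in $\tilde{\mathcal E}_{\mu,s}$ reflects the order $\mu+1$ appearing on the $A_\mu$ side of the Bessel pair).

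Next I would verify the normalizing constant and the exact-solution structure. The representations (\ref{eq3.24e}) and (\ref{eq3.24ee}) must be matched at the recessive singularities: as $z\to1$ one uses $\xi\sim\sqrt2\,(z-1)^{1/2}$ from (\ref{eq2.6aa}), the small-argument behaviour $I_{\mu}(u\xi)\sim (u\xi/2)^{\mu}/\Gamma(\mu+1)$, and the limiting form (\ref{eq1.1b}) of $P^{-\mu}_{\nu}(z)$; comparing coefficients (with $A_{\mu}(u,z)\to$ its value at $z=1$, which the expansion (\ref{eq3.24f}) shows is compatible with a finite nonzero limit after the Cauchy-integral continuation) pins down $L^{\mu}_{\nu}$ in (\ref{eq3.24L}). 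The duplication formula for the Gamma function, applied to $\Gamma(\tfrac12\nu+\tfrac12\mu+1)\Gamma(\tfrac12\nu-\tfrac12\mu+1)$ together with $\Gamma(\nu-\mu+1)$, reproduces the Gegenbauer normalization of \cite{Dunster:2024:AEG}; I would check this is consistent with the ratio of Gamma functions in (\ref{eq1.1e}). For (\ref{eq3.24ee}) the check is at $z\to\infty$ using (\ref{eq2.6a}), the large-argument behaviour $K_{\mu}(u\xi)\sim(\pi/2u\xi)^{1/2}e^{-u\xi}$, and (\ref{eq1.1c}); the factor $(z^2-1)^{-1/4}\xi^{1/2}$ combined with $e^{-u\xi}\sim(2z)^{-u}$ gives the required $(2z)^{-\nu-1}$ decay. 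Formulas (\ref{eq3.25}) and (\ref{eq9.1a}) then follow from (\ref{eq3.24e})--(\ref{eq3.24ee}) by the connection relations (\ref{eq2.22m}) and (\ref{eq2.22y})--(\ref{eq2.22e}) respectively, using the Bessel connection formulas $I_{-\mu}(\zeta)-I_{\mu}(\zeta)=(2/\pi)\sin(\mu\pi)K_{\mu}(\zeta)$ and the rotation law $K_{\mu}(\zeta e^{\pm\pi i})=e^{\mp\mu\pi i}K_{\mu}(\zeta)\mp\pi i\,I_{\mu}(\zeta)$; these are purely algebraic rearrangements once the first two representations are established.

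Finally, the uniform validity of (\ref{eq3.24f})--(\ref{eq3.24g}) for $|\arg(z)|\le\tfrac12\pi$, $|z-1|\ge\delta$, is exactly Theorem-level output of \cite{Dunster:2024:AEG}: there the LG expansions of exponential form are shown to converge asymptotically with explicit error bounds drawn from \cite{Dunster:2020:LGE}, and one transcribes those bounds under the present substitution, checking only that the $\xi$-plane geometry described after \cref{fig:Figxiplane} (the strip $0\le\Re\xi<\infty$, $|\Im\xi|\le\tfrac12\pi$, plus the two reflected strips for the $\boldsymbol{Q}^{\mu}_{\nu,\pm1}$ branches) lies within the region covered by that paper's hypotheses; the reality statement for $\nu,\mu\in\mathbb R$, $z\in(-1,\infty)$ follows from the reality of $\xi$, $\beta$, the polynomials, and the $a_{\mu,s}$ there. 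I expect the only genuine obstacle to be bookkeeping rather than mathematics: keeping the two orders $\mu$ and $\mu+1$ straight throughout (which coefficient carries $a_{\mu,s}$ and which carries $a_{\mu+1,s}$, and correspondingly which Bessel index sits with $A_\mu$ versus $B_\mu$), and confirming that the explicitly stated $L^{\mu}_{\nu}$, $\mathcal E_{\mu,1},\mathcal E_{\mu,2}$, $E_{\mu,1},E_{\mu,2}$ agree — after the Gamma-duplication and the $\beta$-substitution — with the normalization and first coefficients of \cite{Dunster:2024:AEG}. No new turning-point or pole analysis is needed beyond what that reference already supplies, since the simple pole of $f(z)$ at $z=1$ is handled there by the same Cauchy-integral device, which is what permits the extension from $|z-1|\ge\delta$ down to a punctured neighbourhood of $z=1$.
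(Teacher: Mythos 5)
Your overall strategy---reducing the theorem to the Gegenbauer results of \cite{Dunster:2024:AEG} through (\ref{eq1.1e}) and (\ref{eq1.1f}) with the stated parameter identifications, and then obtaining (\ref{eq3.25}) and (\ref{eq9.1a}) by connection formulas---is indeed the paper's route (the paper happens to use (\ref{eq2.22g}) together with (\ref{eq2.50c}) where you use (\ref{eq2.22m}); either is fine). However, two assertions of the theorem are not actually proved by your outline. The first is the analyticity of $A_{\mu}(u,z)$ and $B_{\mu}(u,z)$ at $z=1$ and their reality on $(-1,\infty)$. The expansion (\ref{eq3.24f}) cannot show that $A_{\mu}$ has a finite limit at $z=1$: its coefficients $\tilde{\mathcal{E}}_{\mu,s}(z)$ blow up as $\xi\to 0$ and the expansion is only claimed for $|z-1|\geq\delta$; and the Cauchy-integral continuation you appeal to \emph{presupposes} analyticity rather than establishing it. The paper proves analyticity by solving (\ref{eq3.24e}) and (\ref{eq3.25}) for the coefficient functions via the modified-Bessel Wronskian, arriving at (\ref{eq3.37})--(\ref{eq3.38}), and Taylor-expanding these about $z=1$; the same representations give reality on $(-1,\infty)$. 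Your reality argument ("reality of $\xi$, $\beta$, the polynomials and the $a_{\mu,s}$") fails on $(-1,1)$, where $\xi$ and $\beta$ are purely imaginary, and in any case reality of the terms of an asymptotic expansion does not yield reality of the function itself.

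The second gap is the normalization bookkeeping. You propose to pin down $L_{\nu}^{\mu}$ by matching at the singularities and then to inherit (\ref{eq3.24f})--(\ref{eq3.24g}) ``verbatim,'' but the rescaling that relates $A_{\mu},B_{\mu}$ to the coefficient functions $A,B$ of \cite{Dunster:2024:AEG} is $\sqrt{u}/\{\Gamma(u+1)L_{\nu}^{\mu}\}$ (with an extra $\xi^{-1}$ for $B_{\mu}$), and this factor is \emph{not} asymptotically $1$. The essential missing ingredient is the exponential-form expansion (\ref{eq2.27})/(\ref{eq2.27b}) of the gamma-function ratio, whose constants $E_{\mu,s}(\pm 1)$ must combine with the expansions of $A$ and $B$ from \cite{Dunster:2024:AEG} to produce the clean forms (\ref{eq3.24f})--(\ref{eq3.24g}) with no residual constant factor; this is precisely what justifies the specific choice (\ref{eq3.24L}). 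Your duplication-formula consistency check gestures at this but does not replace it, and the singularity-matching route risks circularity unless you carry out that gamma-ratio asymptotics explicitly.
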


\begin{remark}
The coefficients $a_{\mu,s}$ appear in the expansion
\begin{equation} 
\label{eq2.30}
K_{\mu}(z)\sim
\left(\frac{\pi}{2z}\right)^{1/2}
\exp\left\{-z-\sum_{s=1}^{\infty} (-1)^s
\frac{a_{\mu,s}}{s z^{s}}
\right\}
\quad   \left(z \rightarrow \infty, \,
|\arg(z)| \leq \tfrac{3}{2} \pi - \delta\right),
\end{equation}
which was derived in \cite[Sect. 2.1]{Dunster:2024:AEG} (we use a slightly different notation here).
\end{remark}

\begin{proof}
We start by identifying $\nu \mapsto \mu$, $\lambda \mapsto \mu+\frac12$, $n \mapsto \nu - \mu$, $u \mapsto \nu +\frac12$, $a_{s}(\nu) \mapsto a_{\mu,s}$ and $\tilde{E}_{s}(\beta) \mapsto E_{\mu,s}(\beta)$ from \cite{Dunster:2024:AEG}. Next, noting that the parity of $E_{\mu,s}(\beta)$ matches that of $s$, we have from \cite[Eq. (2.35)]{Dunster:2024:AEG}
\begin{multline}
\label{eq2.27}
\frac{2^{u}\Gamma \left(\frac {1}{2}u
+\frac {1}{2}\mu+\frac34\right)
\Gamma \left(\frac {1}{2}u
-\frac {1}{2}\mu+\frac34\right)}
{\sqrt{\pi}\,\Gamma (u+1)}
\sim 
\exp \left\{
\sum\limits_{s=1}^{\infty}(-1)^{s}
\frac{E_{\mu,s}(1)}{u^{s}}
\right\} 
\\ 
=\exp \left\{
\sum\limits_{s=1}^{\infty}
\frac{E_{\mu,s}(-1)}{u^{s}}
\right\} 
\quad (u \rightarrow \infty,
\; |\arg(u)|\leq \pi-\delta<\pi).
\end{multline}
In \cite{Dunster:2024:AEG} this was proven for $u>0$, but our extension here to $|\arg(u)|\leq \pi-\delta$ follows from referring to Stirling's series \cite[Chap. 8, Eq. (4.04)]{Olver:1997:ASF} for the asymptotics of $\Gamma(z)$ as $z \to \infty$ holds for $|\arg(z)|\leq \pi-\delta$, along with uniqueness of asymptotic expansions \cite[Chap. 1, Sect. 7.2]{Olver:1997:ASF}. Consequently, from (\ref{eq2.2}), (\ref{eq3.24L})  and (\ref{eq2.27}) 
\begin{multline}
\label{eq2.27b}
L_{\nu}^{\mu} \sim 
\frac{\sqrt{u}}{\Gamma(u+1)} 
\exp \left\{\sum_{s=1}^{\infty}(-1)^{s+1} \frac{E_{\mu,s}(1)}{u^s}
\right\}
\\
\left( u=\nu +\tfrac{1}{2} \to \infty, \, 
|\arg(u)| \leq \pi - \delta < \pi \right).
\end{multline}

Now replace the corresponding coefficients $A(u,z)$ and $B(u,z)$ of \cite{Dunster:2024:AEG} by
\begin{equation}
\label{eq3.24p}
A_{\mu}(u,z) 
=\frac{\sqrt{u}\,A(u,z)}{\Gamma(u+1)L_{\nu}^{\mu}},
\end{equation}
and
\begin{equation}
\label{eq3.24q}
B_{\mu}(u,z)
=\frac{\sqrt{u}\,B(u,z)}{\Gamma(u+1)L_{\nu}^{\mu} \,\xi}.
\end{equation}
Then (\ref{eq3.24e}) and (\ref{eq3.24ee}) follow from (\ref{eq1.1e}), (\ref{eq1.1f}), (\ref{eq3.24p}), (\ref{eq3.24q}) and \cite[Eqs. (3.12) and (3.14)]{Dunster:2024:AEG}, and (\ref{eq9.1a}) is obtained on using (\ref{eq2.22e}), (\ref{eq3.24e}), (\ref{eq3.24ee}) and \cite[Eq. 10.34.2]{NIST:DLMF}. Furthermore, (\ref{eq3.25}) follows from (\ref{eq2.22g}), (\ref{eq9.1a}) and (\ref{eq2.50c}) and the Bessel connection formula (see \cite[Eqs. 10.27.3 and 10.34.3]{NIST:DLMF})
\begin{equation}
\label{eq2.50c}
2 \pi i \cos(\mu \pi) I_{-\mu}(z)
=e^{\mu \pi i}K_{\mu}(z e^{-\pi i})
-e^{-\mu \pi i}K_{\mu}(z e^{\pi i}).
\end{equation}
The expansions (\ref{eq3.24f}) and (\ref{eq3.24g}) follow from (\ref{eq2.27b}) - (\ref{eq3.24q}),  and \cite[Eqs. (3.27) and (3.28)]{Dunster:2024:AEG}.

Next, to prove analyticity of the coefficient functions, we have from (\ref{eq3.24e}), (\ref{eq3.25}) and the Wronskian of modified Bessel functions \cite[Eq. 10.28.1]{NIST:DLMF}
\begin{multline} 
\label{eq3.37}
\sin(\mu \pi) A_{\mu}(u,z) = 
\frac{ \pi u \left(z^2-1\right)^{1/4}  \xi^{1/2}}
{2 L_{\nu}^{\mu}}
\left\{ \frac{1}{\Gamma(\nu + \mu +1)}
P^{\mu}_{\nu}(z) I_{\mu+1}(u\xi)
\right.
\\
\left.
- \frac{1}{\Gamma(\nu - \mu +1)}
P^{-\mu}_{\nu}(z) I_{-\mu-1}(u\xi)
\right\},
\end{multline}
and
\begin{multline} 
\label{eq3.38}
\sin(\mu \pi) B_{\mu}(u,z) = 
\frac{ \pi u \left(z^2-1\right)^{1/4}}
{2 L_{\nu}^{\mu} \, \xi^{1/2}}
\left\{ \frac{1}{\Gamma(\nu - \mu +1)}
P^{-\mu}_{\nu}(z) I_{-\mu}(u\xi)
\right.
\\
\left.
- \frac{1}{\Gamma(\nu + \mu +1)}
P^{\mu}_{\nu}(z) I_{\mu}(u\xi)
\right\}.
\end{multline}
We can now expand the RHS of both (\ref{eq3.37}) and (\ref{eq3.38}) as a Taylor series about $z=1$, with the aid of (\ref{eq2.6}) and \cite[Eqs. 10.25.2, 14.3.9 and 15.2.2]{NIST:DLMF} (see also (\ref{eq2.6aa})). These expansions show that the functions $A_{\mu}(u,z)$ and $B_{\mu}(u,z)$ are analytic at $z=1$ ($\xi =0$) for $\sin(\mu \pi) \neq 0$, and by extension for $|\arg(z)| \leq \frac12 \pi$ (including $z=1$). By a limiting argument the restriction $\sin(\mu \pi) \neq 0$ can be relaxed. These expansions also verify that both coefficient functions are real for $\nu,\mu \in \mathbb{R}$ and $z \in (-1,\infty)$.
\end{proof}

\begin{remark}
In \cite{Dunster:2024:AEG} it is stated that the corresponding function $B(u,z)$ is analytic at $z=1$, but it should read that $\xi^{-1}B(u,z)$ (alternatively $(z-1)^{-1/2}B(u,z)$) is analytic at $z=1$ and real for $z \in (-1, \infty)$; apart from the parameters' notation, the only difference in the function $B_{\mu}(u,z)$ here is our scaling factor of (\ref{eq3.24q}), which includes the term $\xi^{-1}$. All the asymptotic expansions in that paper remain valid as stated, and analyticity at $z=1$ is not required but merely chosen here for convenience. The only exception is that \cite[Eq. (5.30)]{Dunster:2024:AEG} should be modified so that $B(u,z)$ and $B_{N}(u,t)$ are replaced by $(z-1)^{-1/2}B(u,z)$ and $(t-1)^{-1/2}B_{N}(u,t)$, respectively, since analyticity is required to apply Cauchy's integral formula.
\end{remark}

To illustrate analyticity we find from (\ref{eq1.1b}), (\ref{eq2.6aa}), (\ref{eq3.24L}), (\ref{eq3.37}), (\ref{eq3.38}) and \cite[Eq. 10.25.2]{NIST:DLMF}
\begin{equation*}
\label{eq3.24l}
A_{\mu}(u,z) = 
M_{\nu}^{\mu}+\mathcal{O}(z-1),
\end{equation*}
and
\begin{equation*}
\label{eq3.24m}
B_{\mu}(u,z) = \frac{u M_{\nu}^{\mu}}{2\mu}
-\frac{u M_{\nu}^{-\mu}}{2\mu}
+\mathcal{O}(z-1),
\end{equation*}
as $z \to 1$, where (on referring to \cite[Eq. 5.11.13]{NIST:DLMF})
\begin{multline*}
\label{eq3.24n}
M_{\nu}^{\mu}=
\left(\frac{2}{u}\right)^{\mu +(1/2)}
\frac{\Gamma \left(\frac12 \nu + \frac12 \mu +1\right)}
{\Gamma \left(\frac12 \nu - \frac12 \mu +\frac12 \right)}
\\
=1 - \frac{(4\mu^2-1)(2\mu + 3)}{48u^2}
+\mathcal{O}\left(\frac{1}{u^4}\right)
\quad  (u=\nu+\tfrac{1}{2} \to \infty).
\end{multline*}

Although points arbitrarily close to but not equal to $z=1$ are theoretically included in the domain of asymptotic validity, we expect the expansions of \cref{thm:nularge} to become less accurate for $z$ close to this singularity. For this reason we restrict use of the expansions to $|z-1| \geq r$ for some chosen $r \in (0,1)$ that is not too small.

For points inside this excluded disk we can instead use the Cauchy integral formula to obtain
\begin{multline}
\label{eq3.35}
A_{\mu}(u,z) 
=\frac{1}{2\pi i}\oint_{\mathscr{C}} 
\frac{A_{\mu}(u,t)}{t-z}dt
\\
\sim 
\frac{1}{2\pi i}\oint_{\mathscr{C}} 
\exp \left\{
\sum\limits_{s=1}^{\infty}
\frac{\tilde{\mathcal{E}}_{\mu,2s}(t)}{u^{2s}}
\right\}
\cosh \left\{ \sum\limits_{s=0}^{\infty }
\frac{\tilde{\mathcal{E}}_{\mu,2s+1}(t)}{u^{2s+1}}
\right\}
\frac{dt}{t-z}
\quad (|z-1| < r, \,u \to \infty),
\end{multline}
and similarly for $B_{\mu}(u,z)$. Here $\mathscr{C}$ is a suitably chosen positively orientated loop enclosing $t=1$ and $t=z$, for example the circle $|z-1|=r'$ where $r' \in (r,1]$. For each fixed $\mu$ values of a finite number of the coefficients $\tilde{\mathcal{E}}_{\mu,s}(t)$ and $\mathcal{E}_{\mu,s}(t)$ at a chosen discrete set of points on the contour can readily be computed and stored, with rapid numerical integration then used for varying values of $u$ and $z$ to approximate $A_{\mu}(u,z)$ and $B_{\mu}(u,z)$. See also \cref{remark:trad} below for an alternative method of asymptotically evaluating these coefficient functions near the pole (as well as those in \cref{thm.conical,thm:mularge} below).

In \cref{sec.numerics} we test our expansions in a similar manner for certain expressions involving our approximations, directly for $|z-1| \geq r$ (our choice is $r=\frac12$), and via a Cauchy integral for $|z-1| < r$, where we take the contour to be the circle $|z-1|=1$.

As stated in the theorem the expansions (\ref{eq3.24f}) and (\ref{eq3.24g}) are valid for $\mu \in \mathbb{C}$ (provided $\Re(\mu) \geq 0$), since the large parameter $u \in \mathbb{R}$ and the Liouville variable $\xi$ given by (\ref{eq2.6}) is unchanged, and hence so too are the associated regions of validity since they are determined by $\Re(u \xi)=u\Re(\xi)$ (see \cite[Chap. 10, sect. 3.1]{Olver:1997:ASF}).

The special case $\mu =i\rho$ where $\rho \in \mathbb{R}$ is of most interest when $\mu$ is not real. The expansions (\ref{eq3.24f}) and (\ref{eq3.24g}) still hold, simply with $\mu$ replaced by $i\rho$. Note that from (\ref{eq2.14}) - (\ref{eq2.17}), (\ref{eq2.31}), (\ref{eq2.32}), and (\ref{eq3.34}) the coefficients $\mathcal{E}_{i\rho,s}(z)$ are also real in the same circumstances. However each $a_{i\rho+1,s}$ is not real, and so from (\ref{eq3.34a}) neither is $\tilde{\mathcal{E}}_{i\rho,s}(z)$. Now from \cite[Eq. 14.9.14]{NIST:DLMF} $\boldsymbol{Q}^{-i\rho}_{\nu}(z)=\boldsymbol{Q}^{i\rho}_{\nu}(z)$, which incidentally means it is real for $\rho, \nu \in \mathbb{R}$ and $1 < z < \infty$. Also $K_{-\mu}(z)=K_{\mu}(z)$, and from (\ref{eq3.24L}) and (\ref{eq3.38}) $B_{-\mu}(u,z)=B_{\mu}(u,z)$. Thus if we set $\mu=\pm i \rho$ in (\ref{eq3.24ee}), add the two equations, then divide the sum by two and refer to \cite[Eq. 10.29.2]{NIST:DLMF}, we arrive at
\begin{multline}
\label{eq3.24ff}
\boldsymbol{Q}^{i\rho}_{\nu}(z)
=
\frac{\sqrt{(2\nu+1)\pi}\,\xi^{1/2}}{2^{\nu+1}
\left|\Gamma \left(\frac12 \nu+1 + \frac12 i\rho 
\right)\right|^2
\left(z^2-1\right)^{1/4}}
 \\ \times
\left\{K_{i\rho}(u \xi)\hat{A}_{i\rho}(u,z)
-\xi K_{i\rho}'(u \xi)B_{i\rho}(u,z)\right\},
\end{multline}
where
\begin{equation} 
\label{eq3.24gg}
\hat{A}_{\mu}(u,z)
=\tfrac12\left\{
A_{\mu}(u,z)+A_{-\mu}(u,z)
\right\}.
\end{equation}
The asymptotic expansion for $\hat{A}_{i\rho}(u,z)$ is readily obtained from (\ref{eq3.24f}). Note that $K_{i\rho}(u\xi)$, $K_{i\rho}'(u\xi)$, $B_{i\rho}(u,z)$ and $\hat{A}_{i\rho}(u,z)$ are all real when $\nu, \rho \in \mathbb{R}$ and $1 < z < \infty$. 

$P^{i\rho}_{\nu}(z)$ is not real in this case, but it remains the unique solution having the behaviour (\ref{eq1.1b}) as $z \to 1$. Now from (\ref{eq3.24e}) and (\ref{eq3.25})
\begin{multline}
\label{eq3.25a}
P_{\nu}^{ \pm i \rho}(z)
=L_{\nu}^{i \rho} \, \Gamma(\nu+1 \pm i \rho)
 \left(z^2-1\right)^{-1/4}
\\ \times
 \xi^{1/2} \left\{I_{\mp i \rho}(u \xi)A_{i \rho}(u,z)
+ \xi I_{\mp i \rho-1}(u \xi)B_{i \rho}(u,z)\right\}.
\end{multline}
These can, for example, be used to yield the asymptotic expansion for function $\hat{P}^{i\rho}_{\nu}(z)$, where $\hat{P}^{\mu}_{\nu}(z)$ is defined by
\begin{equation} 
\label{eq3.24hh}
\hat{P}^{\mu}_{\nu}(z)
=\tfrac12\left\{
P^{\mu}_{\nu}(z)
+P^{-\mu}_{\nu}(z)\right\}.
\end{equation}
This is real-valued when $\mu = i\rho$ ($\rho \in \mathbb{R}$) under the above conditions, and in this case from (\ref{eq1.1b}), (\ref{eq3.24hh}) and \cite[Eqs. 5.4.3 and 5.5.1]{NIST:DLMF} it has the oscillatory behaviour
\begin{equation*}
\label{eq3.24i}
\hat{P}^{i\rho}_{\nu}(z) \sim
\left(\frac{\sinh(\pi \rho)}{\pi \rho}\right)^{1/2}
\cos\left\{
\frac{\rho}{2} \ln\left(\frac{z-1}{2}\right)
-\phi(\rho)\right\}
\quad (z \rightarrow 1^{+}),
\end{equation*}
where $\phi(\rho)=\arg\{\Gamma(1+i\rho)\}$.

\section{Conical case: \texorpdfstring{$\nu=-\frac12+i\tau$}{} for large positive \texorpdfstring{$\tau$}{} and \texorpdfstring{$\mu$}{} bounded}
\label{sec.LargeTau}

Now consider $\nu=-\frac12+i \tau$ with $\tau \in \mathbb{R}$ and $\tau \to \infty$. Again $\mu$ will be bounded with $\Re(\mu) \geq 0$. In what follows the variables and coefficients are the same as given in \cref{thm:nularge}.

Although the Liouville transformation is unchanged we have real $u$ replaced by purely imaginary $i \tau$, and so the LG regions of validity need to be re-evaluated, since this time instead of $\Re(u\xi)=u\Re(\xi)$ being monotonic on the paths linking the reference point to the points where the LG expansions are valid, we require monotonicity for $\Re(i\tau\xi)=-\tau\Im(\xi)$. To this end, we first require the following LG expansions for the functions $\boldsymbol{Q}^{\mu}_{-\frac{1}{2}+i\tau}(z)$ and $\boldsymbol{Q}^{\mu}_{-\frac{1}{2}+i\tau,-1}(z)$.

\begin{lemma}
\label{lem.LGconical}

Let $N$ be a positive integer. Then for $\tau \in \mathbb{R}$ and $\tau \to \infty$, with  $\Re(\mu) \geq 0$ and $0 \leq |\mu| \leq \mu_{0}< \infty$
\begin{multline}
\label{eq2.49}
\boldsymbol{Q}^{\mu}_{-\frac{1}{2}+i\tau}(z) =
\frac{\sqrt{\tfrac{1}{2}\pi}}{\Gamma(1+i\tau)
\left(z^2-1\right)^{1/4}}
\\  \times
\exp \left\{ -i\tau\xi
+\sum\limits_{s=1}^{N-1} i^{s}
\frac{E_{\mu,s}(\beta)
-E_{\mu,s}(1)}{\tau^{s}}
\right\}
\left\{1+\mathcal{O}\left(\frac{1}{\tau^{N}}\right)\right\},
\end{multline}
and
\begin{multline}
\label{eq2.42}
\boldsymbol{Q}^{\mu}_{-\frac{1}{2}+i\tau,-1}(z) =
\frac{i\sqrt{\tfrac{1}{2}\pi}}{\Gamma(1+i\tau)
\left(z^2-1\right)^{1/4}}
\\  \times
\exp \left\{i\tau\xi
+\sum\limits_{s=1}^{N-1} (-i)^{s}
\frac{E_{\mu,s}(\beta)
-E_{\mu,s}(-1)}{\tau^{s}}
\right\}
\left\{1+\mathcal{O}\left(\frac{1}{\tau^{N}}\right)\right\},
\end{multline}
uniformly for $|\arg(z)|\leq\frac12 \pi$ and $|z-1| \geq \delta>0$.
\end{lemma}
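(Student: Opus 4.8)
The plan is to obtain both expansions from the Liouville--Green (LG) apparatus already in force for large positive $u=\nu+\frac12$, specialized to the purely imaginary value $u=i\tau$. The starting point is the ``bare'' LG expansion of $\boldsymbol{Q}^{\mu}_{\nu}(z)$, valid for real large $u$ away from the pole $z=1$, of the form
\begin{equation*}
\boldsymbol{Q}^{\mu}_{\nu}(z)=
\frac{\sqrt{\tfrac12\pi}}
{\Gamma\!\left(\nu+\tfrac32\right)\left(z^{2}-1\right)^{1/4}}
\exp\!\left\{-u\xi+\sum_{s=1}^{N-1}
\frac{(-1)^{s}\{E_{\mu,s}(\beta)-E_{\mu,s}(1)\}}{u^{s}}\right\}
\left\{1+\mathcal{O}\!\left(u^{-N}\right)\right\},
\end{equation*}
which one extracts from (\ref{eq1.1f}) and (\ref{eq3.24ee}) by inserting the large-argument forms (\ref{eq2.30}) of $K_{\mu}$ and $K_{\mu+1}$, the asymptotics (\ref{eq2.27b}) of $L_{\nu}^{\mu}$, and the coefficient expansions (\ref{eq3.24f})--(\ref{eq3.24g}); the normalizing constant and the subtraction of $E_{\mu,s}(1)$ are pinned down by matching (\ref{eq1.1c}) as $z\to\infty$ (so $\beta\to1$, $e^{-u\xi}\to0$).

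Next I would pass to $u=i\tau$ by analytic continuation in $u$. For fixed $z$ with $|\arg z|\le\frac12\pi$, $|z-1|\ge\delta$, the identity above --- the solution recessive at $z=\infty$ equals the displayed LG form --- holds for $u>0$, and both sides continue analytically to $|\arg u|<\pi$; hence, exactly as in the proof of \cref{thm:nularge} following (\ref{eq2.27}), using that Stirling's series holds for $|\arg u|\le\pi-\delta$ and that asymptotic expansions are unique, the identity and the LG expansion persist as $u=\nu+\frac12$ rotates onto the positive imaginary axis. Setting $u=i\tau$ turns $-u\xi$ into $-i\tau\xi$ and $(-1)^{s}u^{-s}$ into $i^{s}\tau^{-s}$, which yields (\ref{eq2.49}) on noting $\Gamma(\nu+\frac32)=\Gamma(1+i\tau)$. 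The branch $\boldsymbol{Q}^{\mu}_{-\frac12+i\tau,-1}(z)$ of (\ref{eq2.22y}) I would then obtain by continuing the same bare formula once around $z=1$ in the negative sense: this multiplies $(z^{2}-1)^{-1/4}$ by $i$ and, through (\ref{eq2.6}) and (\ref{eq2.12}), sends $\xi\mapsto-\xi$ and $\beta\mapsto-\beta$, so that $-u\xi\mapsto+i\tau\xi$ and (since the constants $E_{\mu,s}(1)$ are unchanged while $(-1)^{s}E_{\mu,s}(1)=E_{\mu,s}(-1)$) the $s$-th correction term becomes $(-i)^{s}\tau^{-s}\{E_{\mu,s}(\beta)-E_{\mu,s}(-1)\}$, reproducing (\ref{eq2.42}). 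The same result also follows from the connection formula (\ref{eq2.22e}) on checking, via the standard Legendre connection coefficients, that the $e^{-i\tau\xi}$-type component of the resulting combination cancels --- which is precisely why this branch possesses a single-exponential LG form.

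The essential point, and the step I expect to be the main obstacle, is the re-derivation of the region of uniform validity (together with the bookkeeping of which exact solution is being approximated). The LG error bounds of \cite{Dunster:2020:LGE} used in \cite[Sec.~5]{Dunster:2024:AEG} live on $\xi$-domains built from progress paths, along which $\Re(u\xi)$ must vary monotonically; for real $u$ this quantity is $u\Re(\xi)$, but at $\arg u=\frac12\pi$ it becomes $\Re(i\tau\xi)=-\tau\Im(\xi)$, so monotonicity of $\Im(\xi)$ replaces that of $\Re(\xi)$. Using the $\xi$-plane picture of \cref{fig:Figxiplane} --- the strip $0\le\Re(\xi)<\infty$, $|\Im(\xi)|\le\frac12\pi$ corresponding to $|\arg z|\le\frac12\pi$, together with the reflected strips $\Re(\xi)\le0$ corresponding to the branch (\ref{eq2.22y}) --- one must verify that every such $z$ with $|z-1|\ge\delta$ can be joined to the relevant reference point at infinity ($\Re(\xi)=+\infty$ for (\ref{eq2.49}), $\Re(\xi)=-\infty$ for (\ref{eq2.42})) by a path along which $\Im(\xi)$ is monotonic and which stays away from a fixed neighbourhood of the pole $\xi=0$; this places the claimed region inside the $\xi$-domain of validity and delivers the remainders $\{1+\mathcal{O}(\tau^{-N})\}$ uniformly there.

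A further wrinkle in that step is that, $u=i\tau$ being purely imaginary, the two LG solutions $e^{\pm i\tau\xi}(z^{2}-1)^{-1/4}$ are equi-oscillatory --- neither recessive --- on the positive real axis $z>1$, so there the usual criterion for fixing a solution from its leading LG behaviour is inconclusive. I would therefore make the identifications of $\boldsymbol{Q}^{\mu}_{-\frac12+i\tau}(z)$ and $\boldsymbol{Q}^{\mu}_{-\frac12+i\tau,-1}(z)$ where the two exponentials are genuinely ordered --- slightly off the real axis, or via the continuation in $u$ above --- the exact forms (\ref{eq1.1c}) and the connection formula (\ref{eq2.22e}) then guaranteeing that no exponentially recessive admixture has been introduced. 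With the $\xi$-domain analysis and these identifications in hand, (\ref{eq2.49}) and (\ref{eq2.42}) follow.
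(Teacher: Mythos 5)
Your proposal is correct in substance and, in its rigorous core, coincides with the paper's proof: everything ultimately rests on applying the LG expansions and error bounds of \cite{Dunster:2020:LGE} to (\ref{eq2.3}) with $u=i\tau$, re-deriving the $\xi$-domains of validity from monotonicity of $\Im(\xi)$ along progress paths to reference points at infinity (with $\xi=0$ excluded), and identifying which exact solutions are being approximated from their oscillatory behaviour at the two infinities via (\ref{eq1.1c}), (\ref{eq2.22y}) and (\ref{eq2.6b}) --- all of which you describe. Where you differ is the scaffolding: you first write a real-$u$ ``bare'' LG form of $\boldsymbol{Q}^{\mu}_{\nu}(z)$ and rotate $u$ onto the imaginary axis, and you produce (\ref{eq2.42}) by formal monodromy $\xi\mapsto-\xi$, $\beta\mapsto-\beta$ around $z=1$ (or via (\ref{eq2.22e})). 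Those steps are only heuristic --- an asymptotic expansion does not continue analytically in $u$, nor in $z$ beyond its region of validity, so they cannot carry the proof by themselves --- but you acknowledge this and fall back on precisely the error-bound/domain argument that the paper uses, so nothing essential is missing; the paper is simply more economical in invoking the LG theorem directly at $u=i\tau$. The one detail you leave unpinned is the placement of the reference points: it is not enough to take them ``at $\Re(\xi)=\pm\infty$''; they must be the corner points $\alpha_{2}=\infty-\frac12\pi i$ and $\alpha_{1}=-\infty+\frac12\pi i$ of the strip $|\Im(\xi)|\leq\frac12\pi$, since the progress paths require $\Im(\xi)$ nondecreasing from $\alpha_{2}$ (respectively nonincreasing from $\alpha_{1}$), and only these extreme imaginary parts allow every point of the strip with $|\xi|\geq\delta$ to be reached (a straight line to $\alpha_{2}$; a horizontal segment followed by a straight line from $\alpha_{1}$). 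Your treatment of the equi-oscillatory identification --- matching the phases $z^{-1/2\mp i\tau}$ at $z=\infty$ and $z=\infty_{+1}$ rather than appealing to recessiveness --- is exactly the device the paper uses to fix the constants.
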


\begin{remark}
The regions of validity are larger than stated, but we only require them for the (principal) right half $z$ plane. For example, (\ref{eq2.42}) holds on the negative imaginary $z$ axis on the sheet traversing the cut $[0,1]$ from above, which corresponds to the line $\mathsf{A}$$\mathsf{D}_{+1}'$ in the $\xi$ plane as shown in \cref{fig:Figxiplane}. Moreover, the order term  vanishes as $z \to \infty_{+1}$ (the point at infinity on the negative imaginary axis across the cut and corresponding to $\xi \to -\infty + \frac12 \pi i$). The order term in (\ref{eq2.49}) vanishes as $z \to \infty$ in the half plane $|\arg(z)|\leq\frac12 \pi$ ($\Re(\xi) \to +\infty$).
\end{remark}

\begin{proof}
The expansions on the RHS of (\ref{eq2.49}) and (\ref{eq2.42}) come from \cite[Sect. 1]{Dunster:2020:LGE} applied to (\ref{eq2.3}), with $u=i\tau$. Bounds for the error terms are furnished by \cite[Thm. 1.1]{Dunster:2020:LGE}, and these, along with the domains of validity, depend on suitably chosen reference points $\xi = \alpha_{1,2}$.

For (\ref{eq2.49}) we take the reference point $\xi=\alpha_{2}=\infty-\frac12 \pi i$, i.e. the point at infinity labelled $\mathsf{D}'$ in \cref{fig:Figxiplane}. Then the region of validity consists of points $\xi$ that can be linked to $\alpha_{2}$ by a finite chain of $R_{2}$ arcs (as defined in \cite[Chap. 5, \S 3.3]{Olver:1997:ASF}) with the property that as $t$ (say) passes along the path from $\alpha_{2}$ to $\xi$, the imaginary part of $t$ is nondecreasing. In addition the point $\xi=0$ (corresponding to $z=1$) must be excluded, as the error terms diverge at this singularity. Now $|\arg(z)|\leq \frac12 \pi$ corresponds to  the strip $0 \leq \Re(\xi) < \infty$, $|\Im(\xi)| \leq \frac12 \pi$. From \cref{fig:Figxiplane} we see that all points $\xi$ in this strip are accessible by a straight line path to $\infty-\frac12 \pi i$, and along this path the monotonicity requirement clearly holds. This verifies that the LG asymptotic expansion of the RHS of (\ref{eq2.49}) holds in the stated region. 

For (\ref{eq2.42}) we take $\alpha_{1}=-\infty + \frac12 \pi i$ which is labelled by $\mathsf{D}_{+1}'$ in \cref{fig:Figxiplane}. This time the monotonicity requirement is that the imaginary part of $t$ is nonincreasing as $t$ passes along the path from $\alpha_{1}$ to $\xi$. From \cref{fig:Figxiplane} we see that all points in the strip $0 \leq \Re(\xi) < \infty$, $|\Im(\xi)| \leq \frac12 \pi$ are accessible by a path to $\alpha_{1}$ along which this monotonicity requirement holds. For example, the path could consist of the union of the horizontal line from $\alpha_{1}$ to $1 + \frac12 \pi i$, and a straight line from this latter point to $\xi$. This again verifies that the LG asymptotic expansion of the RHS of (\ref{eq2.42}) holds in the stated region. 

Finally the matching of both sides of these two equations follows from the uniqueness of the oscillatory behaviour of the solutions at $z=\infty$ ($\xi =+\infty$) and $z=\infty_{+1}$ respectively, recalling that the latter denotes the point at infinity corresponding to $\xi = \alpha_{1}$, i.e. the point at infinity on the imaginary $z$ axis after crossing the cut $[0,1]$ from above. Now from (\ref{eq2.6})
\begin{equation}
\label{eq2.6b}
\xi = -\ln(2|z|)+\mathcal{O}(z^{-2})
\quad (z \to \infty_{+1}).
\end{equation}
The constants of proportionality in both expansions follow from (\ref{eq2.22y}), (\ref{eq1.1c}), (\ref{eq2.6}) and (\ref{eq2.6b}). In this we used that $\beta \to \pm 1$ as  $z \to \infty$ and $z \to \infty_{+1}$, respectively.
\end{proof}

For conical functions we now state our main result.

\begin{theorem}
\label{thm.conical}
\begin{multline}
\label{eq9.1b}
P^{-\mu}_{-\frac{1}{2}+i\tau}(z)
=e^{\mu \pi i/2} L_{-\frac{1}{2}+i\tau}^{\mu} \, 
\Gamma\left(\tfrac12-\mu+i\tau\right)
\left(z^2-1\right)^{-1/4} 
\\ \times
\xi^{1/2}\left\{J_{\mu}(\tau \xi)
A_{\mu}(i\tau,z)
+i \xi J_{\mu+1}(\tau \xi )
B_{\mu}(i\tau,z)\right\},
\end{multline}
\begin{multline}
\label{eq9.1c}
\boldsymbol{Q}^{\mu}_{-\frac{1}{2}+i\tau}(z)
=-\tfrac12 \pi i e^{-\mu \pi i/2}
L_{-\frac{1}{2}+i\tau}^{\mu} \, 
\left(z^2-1\right)^{-1/4} 
\\ \times
\xi^{1/2}\left\{H_{\mu}^{(2)}(\tau \xi)
A_{\mu}(i\tau,z)
+i \xi H_{\mu+1}^{(2)}(\tau \xi )
B_{\mu}(i\tau,z)\right\},
\end{multline}
\begin{multline}
\label{eq9.1d}
P^{\mu}_{-\frac{1}{2}+i\tau}(z)
=e^{-\mu \pi i/2} L_{-\frac{1}{2}+i\tau}^{\mu} \, 
\Gamma\left(\tfrac12+\mu+i\tau\right)
\left(z^2-1\right)^{-1/4} 
\\ \times
\xi^{1/2}\left\{J_{-\mu}(\tau \xi)
A_{\mu}(i\tau,z)
+i \xi J_{-\mu-1}(\tau \xi )
B_{\mu}(i\tau,z)\right\},
\end{multline}
\begin{multline}
\label{eq9.1f}
\boldsymbol{Q}^{\mu}_{-\frac{1}{2}+i\tau,+1}(z)
=\tfrac12 \pi i e^{\mu \pi i/2}
L_{-\frac{1}{2}+i\tau}^{\mu} \, 
\left(z^2-1\right)^{-1/4} 
\\ \times
\xi^{1/2}\left\{H_{\mu}^{(1)}\left(\tau \xi e^{2\pi i}\right)
A_{\mu}(i\tau,z)
+ i \xi H_{\mu+1}^{(1)}\left(\tau \xi e^{2\pi i}\right)
B_{\mu}(i\tau,z)\right\},
\end{multline}
and
\begin{multline}
\label{eq9.1e}
\boldsymbol{Q}^{\mu}_{-\frac{1}{2}+i\tau,-1}(z)
=\tfrac12 \pi i e^{\mu \pi i/2}
L_{-\frac{1}{2}+i\tau}^{\mu} \, 
\left(z^2-1\right)^{-1/4} 
\\ \times
\xi^{1/2}\left\{H_{\mu}^{(1)}(\tau \xi)
A_{\mu}(i\tau,z)
+ i \xi H_{\mu+1}^{(1)}(\tau \xi)
B_{\mu}(i\tau,z)\right\},
\end{multline}
where $A_{\mu}(i\tau,z)$ and $B_{\mu}(i\tau,z)$ are analytic for $|\arg(z)| \leq \frac12 \pi$, and for $\mu$ bounded with $\Re(\mu) \geq 0$ possess the asymptotic expansions
\begin{equation}
\label{eq3.24fff}
A_{\mu}(i\tau,z) \sim 
\exp \left\{
\sum\limits_{s=1}^{\infty}
(-1)^{s}\frac{\tilde{\mathcal{E}}_{\mu,2s}(z)}{\tau^{2s}}
\right\}
\cos \left\{ \sum\limits_{s=0}^{\infty } (-1)^{s}
\frac{\tilde{\mathcal{E}}_{\mu,2s+1}(z)}{\tau^{2s+1}}
\right\},
\end{equation}
and
\begin{equation}
\label{eq3.24ggg}
B_{\mu}(i\tau,z) \sim -\frac{i}{\xi}
\exp \left\{
\sum\limits_{s=1}^{\infty}
(-1)^{s}\frac{\mathcal{E}_{\mu,2s}(z)}{\tau^{2s}}
\right\}
\sin \left\{ \sum\limits_{s=0}^{\infty } (-1)^{s}
\frac{\mathcal{E}_{\mu,2s+1}(z)}{\tau^{2s+1}}
\right\},
\end{equation}
as $\tau \to \infty$ uniformly for  $|\arg(z)| \leq \frac12 \pi$ with $|z-1| \geq \delta >0$.
\end{theorem}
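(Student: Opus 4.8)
The plan is to derive Theorem~\ref{thm.conical} from \cref{thm:nularge} and \cref{lem.LGconical} by setting $u = i\tau$ and tracking how the modified Bessel functions $I_{\pm\mu}$, $K_\mu$ and their arguments transform under the rotation $\xi \mapsto \xi$, $u\xi \mapsto i\tau\xi$. Concretely, I would start from the representations \eqref{eq3.24e}, \eqref{eq3.24ee}, \eqref{eq3.25} and \eqref{eq9.1a}, formally replace $\nu$ by $-\tfrac12 + i\tau$ (so that $u = i\tau$), and then convert the modified Bessel functions of imaginary argument into ordinary Bessel and Hankel functions using the standard connection formulas \cite[Eqs. 10.27.6, 10.27.8, 10.27.10]{NIST:DLMF}, namely $I_\mu(i\tau\xi) = e^{\mu\pi i/2}J_\mu(\tau\xi)$, $K_\mu(i\tau\xi) = -\tfrac12\pi i\,e^{-\mu\pi i/2}H_\mu^{(2)}(\tau\xi)$, and $K_\mu(i\tau\xi e^{\pm\pi i})$ expressed via $H_\mu^{(1)}$. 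Matching the prefactors $e^{\pm\mu\pi i/2}$ and the powers of $i$ multiplying the $B_\mu$ terms (which pick up an extra factor of $i$ from $I_{\mu+1}(i\tau\xi)/I_\mu(i\tau\xi)$-type ratios, since the order is raised by one) then yields \eqref{eq9.1b}--\eqref{eq9.1e}.

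The second component is establishing the asymptotic expansions \eqref{eq3.24fff} and \eqref{eq3.24ggg}. These should follow immediately from \eqref{eq3.24f} and \eqref{eq3.24g} by the substitution $u \mapsto i\tau$: the even-index terms $\tilde{\mathcal{E}}_{\mu,2s}(z)/u^{2s}$ become $(-1)^s\tilde{\mathcal{E}}_{\mu,2s}(z)/\tau^{2s}$, the odd-index terms $\tilde{\mathcal{E}}_{\mu,2s+1}(z)/u^{2s+1}$ acquire a factor $i^{-(2s+1)} = (-1)^{s+1}i$, and $\cosh(ix) = \cos x$, $\sinh(ix) = i\sin x$ convert the hyperbolic functions to trigonometric ones — which, combined with the $-i/\xi$ versus $1/\xi$ prefactor discrepancy, accounts for the signs in \eqref{eq3.24ggg}. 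Rigour here rests on the fact that $A_\mu(u,z)$ and $B_\mu(u,z)$ were defined in \cref{thm:nularge} as genuine analytic functions (via \eqref{eq3.24p}, \eqref{eq3.24q}) whose asymptotic expansions, carried over from \cite{Dunster:2024:AEG}, are valid for complex $u$ with $|\arg(u)|\leq \pi - \delta$; taking $\arg(u) = \pi/2$ is within this range, and the uniform error bounds of \cite[Sec.~5]{Dunster:2024:AEG} and \cite{Dunster:2020:LGE} remain in force, now governed by the monotonicity of $\Re(i\tau\xi) = \tau\Im(\xi)$ as discussed in the proof of \cref{lem.LGconical}.

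For analyticity of $A_\mu(i\tau,z)$ and $B_\mu(i\tau,z)$ at $z = 1$ ($\xi = 0$), I would repeat the Wronskian argument from the proof of \cref{thm:nularge}: solve the pair \eqref{eq9.1b}, \eqref{eq9.1d} (or their modified-Bessel antecedents) for $A_\mu$ and $B_\mu$ using the Bessel Wronskian \cite[Eq.~10.5.2]{NIST:DLMF}, obtaining expressions of the form $\sin(\mu\pi)A_\mu(i\tau,z) = (\text{prefactor})\{P_\nu^\mu J_{\mu+1} - c\,P_\nu^{-\mu}J_{-\mu-1}\}$ and similarly for $B_\mu$, then expand the right-hand side in a Taylor series about $z = 1$ using \eqref{eq2.6aa}, the small-argument behaviour of $J_{\pm\mu}$, and \cite[Eqs.~14.3.9, 15.2.2]{NIST:DLMF}; the fractional powers of $z - 1$ cancel, showing analyticity for $\sin(\mu\pi)\neq 0$, and a limiting argument removes this restriction. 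Alternatively — and this is cleaner — since $A_\mu(u,z)$ and $B_\mu(u,z)$ are analytic in $z$ for every fixed $u$ with $|\arg u|\le \pi-\delta$ (already proved in \cref{thm:nularge}), analyticity at $u = i\tau$ is a special case requiring no new work.

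The main obstacle I anticipate is purely bookkeeping: getting every branch choice, every power of $i$, every $e^{\pm\mu\pi i/2}$, and every sign consistent across the five function identities \eqref{eq9.1b}--\eqref{eq9.1e} simultaneously, while respecting the branch cut conventions for $\xi^{1/2}$, $(z^2-1)^{-1/4}$, and the rotated arguments $\tau\xi e^{2\pi i}$ in the $\boldsymbol{Q}^\mu_{-1/2+i\tau,+1}$ case. In particular the $\boldsymbol{Q}_{\pm1}$ branches require composing the analytic-continuation relations \eqref{eq2.22e}, \eqref{eq2.22y} with the Bessel connection formula \eqref{eq2.50c} and its $H^{(1)}$ analogue, and one must verify these are mutually consistent with \eqref{eq2.22f}. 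A useful independent check is that specializing \eqref{eq9.1c} at $z\to\infty$ reproduces the known oscillatory asymptotics \eqref{eq1.1c} of $\boldsymbol{Q}^\mu_{-1/2+i\tau}(z)$ via the large-argument form of $H_\mu^{(2)}$ together with \eqref{eq2.6a} and \eqref{eq2.27b}; similarly \eqref{eq9.1b} should match \eqref{eq1.1b} at $z\to 1$ through the $\xi^{1/2}J_\mu(\tau\xi)$ factor.
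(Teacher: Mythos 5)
Your derivation of the five function identities \eqref{eq9.1b}--\eqref{eq9.1e} (set $u=i\tau$ in the exact representations of \cref{thm:nularge} and convert $I_\mu$, $K_\mu$ to $J_\mu$, $H_\mu^{(1,2)}$ via the standard connection formulas) and your treatment of analyticity at $z=1$ both match the paper. The genuine gap is in how you establish the asymptotic expansions \eqref{eq3.24fff} and \eqref{eq3.24ggg}. You assert that they ``follow immediately'' from \eqref{eq3.24f} and \eqref{eq3.24g} by the substitution $u\mapsto i\tau$, on the grounds that the expansions of $A_\mu(u,z)$, $B_\mu(u,z)$ are ``valid for complex $u$ with $|\arg(u)|\le\pi-\delta$.'' No such validity is available: the only statement of that kind in the paper is the extension of the Stirling-type expansion \eqref{eq2.27} (hence \eqref{eq2.27b}) for the $z$-independent constant $L_\nu^\mu$; the coefficient-function expansions themselves are established (via the LG expansions of \cite{Dunster:2024:AEG}) only for real $u\to\infty$, and their $z$-regions of validity are tied to progressive paths on which $\Re(u\xi)=u\Re(\xi)$ is monotonic. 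When $u=i\tau$ the monotonicity condition changes to one on $\tau\Im(\xi)$, so the admissible domains must be re-derived from scratch --- the paper itself flags that the substitution $u\mapsto i\tau$ yields \eqref{eq3.24fff}--\eqref{eq3.24ggg} only ``formally (but not rigorously).'' Your appeal to ``the monotonicity of $\Re(i\tau\xi)$ as discussed in the proof of \cref{lem.LGconical}'' names the right issue but does not supply the argument, and \cref{lem.LGconical} by itself concerns the functions $\boldsymbol{Q}^{\mu}_{-\frac12+i\tau}$ and $\boldsymbol{Q}^{\mu}_{-\frac12+i\tau,-1}$, not $A_\mu$ and $B_\mu$.

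What is actually needed (and what the paper does) is an exact, non-asymptotic bridge from those two LG-approximable solutions to the coefficient functions: using the Hankel-function Wronskian one writes $A_\mu(i\tau,z)$ as a combination of $\boldsymbol{Q}^{\mu}_{-\frac12+i\tau}(z)H^{(1)}_{\mu+1}(\tau\xi)$ and $\boldsymbol{Q}^{\mu}_{-\frac12+i\tau,-1}(z)H^{(2)}_{\mu+1}(\tau\xi)$ as in \eqref{eq2.27a}, then inserts the LG expansions \eqref{eq2.49} and \eqref{eq2.42} of \cref{lem.LGconical} (whose reference points $\xi=\infty-\frac12\pi i$ and $-\infty+\frac12\pi i$ and progressive-path verification are exactly the re-validation step) together with the exponential-form Hankel expansions \eqref{eq2.30a}, \eqref{eq2.30b} and the constant expansion \eqref{eq2.27b}, and checks that all four expansions hold simultaneously on the strip $0\le\Re(\xi)<\infty$, $|\Im(\xi)|\le\frac12\pi$, $|\xi|\ge\delta$. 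Without this (or an equivalent re-derivation of the error bounds for $u$ on the imaginary axis), your proof of \eqref{eq3.24fff} and \eqref{eq3.24ggg} is only the formal substitution, which is precisely the step the theorem is required to justify. You do invoke a Wronskian argument, but only for analyticity at $z=1$, not for this asymptotic matching, so the gap stands.
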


\begin{remark}
From (\ref{eq3.24L}) we obtain
\begin{equation}
\label{eq3.24j}
L_{-\frac{1}{2}+i\tau}^{\mu}=
\frac{e^{\pi i/4} \sqrt{\pi\tau}}
{2^{i\tau}\Gamma \left(
\frac12 \mu +\frac34 +\frac12 i\tau  \right)
\Gamma \left(
\frac34- \frac12 \mu +\frac12 i\tau  \right)},
\end{equation}
and hence on referring to the gamma functional relations \cite[Sect. 5.5]{NIST:DLMF} it follows that
\begin{multline*}
\label{eq3.24k}
e^{\pm \mu \pi i/2} L_{-\frac{1}{2}+i\tau}^{\mu} \, 
\Gamma\left(\tfrac12 \mp \mu +i\tau \right)
=2^{\mp \mu-(3/2)} \pi^{-1} \sqrt{\tau}\, e^{\pi \tau/2}
\\ \times
\left|\Gamma \left( \tfrac14
\mp \tfrac12 \mu +\tfrac12 i\tau\right)\right|^{2}
\left(1+i e^{\pm \mu \pi i}e^{-\pi \tau}\right),
\end{multline*}
which are therefore real to within a relatively exponentially small imaginary term for large $\tau$. Thus the expansions given by (\ref{eq9.1b}), (\ref{eq9.1d}), (\ref{eq3.24fff}) and (\ref{eq3.24ggg}) are consistent with the known property that $P^{\mu}_{-\frac{1}{2}+i\tau}(x)$ is real for $\tau,\mu \in \mathbb{R}$ and $1<x<\infty$.
\end{remark}

\begin{remark}
Expansions for $|z-1|<\delta$ can be achieved via Cauchy's integral formula similarly to (\ref{eq3.35}).
\end{remark}

\begin{proof}
Firstly, (\ref{eq9.1b}) - (\ref{eq9.1e}) follow from (\ref{eq3.24e}) - (\ref{eq9.1a}) and using \cite[Eqs. 10.27.6 and 10.27.8]{NIST:DLMF}. Analyticity of the coefficients $A_{\mu}(i\tau,z)$ and $B_{\mu}(i\tau,z)$ at $z=1$ can readily be established in a similar manner to the case $\nu \in \mathbb{R}$.

It remains to verify that (\ref{eq3.24f}) and (\ref{eq3.24g}) hold with real $u$ replaced by purely imaginary $i\tau$, which formally (but not rigorously) yield the expansions (\ref{eq3.24fff}) and (\ref{eq3.24ggg}). To establish that (\ref{eq3.24fff}) indeed holds we use (\ref{eq9.1c}), (\ref{eq9.1e}) and \cite[Eq. 10.5.5]{NIST:DLMF} to obtain
\begin{multline} 
\label{eq2.27a}
A_{\mu}(i\tau,z) = 
-\frac{\tau  \left(z^2-1\right)^{1/4} \xi^{1/2}}
{2 L_{-\frac{1}{2}+i\tau}^{\mu}}
\left\{ e^{\mu \pi i/2}
\boldsymbol{Q}^{\mu}_{-\frac{1}{2}+i\tau}(z)
H_{\mu+1}^{(1)}(\tau \xi)
\right.
\\
\left.
+ e^{-\mu \pi i/2}
\boldsymbol{Q}^{\mu}_{-\frac{1}{2}+i\tau,-1}(z)
H_{\mu+1}^{(2)}(\tau \xi)
\right\}.
\end{multline}
Next from (\ref{eq2.30}) and \cite[Eq. 10.27.8]{NIST:DLMF}
\begin{multline} 
\label{eq2.30a}
H_{\mu+1}^{(1)}(\tau \xi) \sim
e^{-(2\mu+3)\pi i/4}
\left(\frac{2}{\pi \tau \xi }\right)^{1/2}
\exp\left\{i \tau \xi
-\sum_{s=1}^{\infty} (-i)^s
\frac{a_{\mu+1,s}}{s\, (\tau \xi)^{s}}
\right\}
\\
\quad   \left(\tau \xi \rightarrow \infty, \,
-\pi + \delta \leq \arg(\xi) \leq 2\pi - \delta   \right),
\end{multline}
and
\begin{multline} 
\label{eq2.30b}
H_{\mu+1}^{(2)}(\tau \xi) \sim
e^{(2\mu+3)\pi i/4}
\left(\frac{2}{\pi \tau \xi }\right)^{1/2}
\exp\left\{-i \tau \xi
-\sum_{s=1}^{\infty} i^s
\frac{a_{\mu+1,s}}{s\, (\tau \xi)^{s}}
\right\}
\\
\quad   \left(\tau \xi \rightarrow \infty, \,
-2\pi + \delta \leq \arg(\xi) \leq \pi - \delta \right).
\end{multline}
Now (\ref{eq2.49}), (\ref{eq2.42}), (\ref{eq2.30a}) and (\ref{eq2.30b}) are all valid for $\xi$ lying in the domain $0\leq \Re(\xi)<\infty$, $|\Im(\xi)| \leq \frac12 \pi$, $|\xi| \geq \delta >0$, or equivalently for $z$ lying in the stated region; see \cref{fig:Figzplane,fig:Figxiplane}. Then from (\ref{eq2.27b}), (\ref{eq2.49}), (\ref{eq2.42}) and (\ref{eq2.27a}) - (\ref{eq2.30b}) we verify that (\ref{eq3.24fff}) holds as $\tau \to \infty$ uniformly in this region. The expansion (\ref{eq3.24ggg}) in the same domain can be established similarly.
\end{proof}

As earlier there is again no restriction on $\mu$ being real. In particular, expansions for $P^{\mp i\rho}_{-\frac{1}{2}+i\tau}(z)$ and $\boldsymbol{Q}^{i\rho}_{-\frac{1}{2}+i\tau}(z)$ ($\rho >0$) come immediately from (\ref{eq9.1b}) - (\ref{eq9.1d}), (\ref{eq3.24fff}) and (\ref{eq3.24ggg}) with $\mu$ replaced by $i\rho$.

\section{Large \texorpdfstring{$|\mu|$}{} with \texorpdfstring{$\nu$}{} bounded}
\label{sec.LargeMu}

As before $u$, $\beta$ and $L_{\mu}^{\nu}$ are given by (\ref{eq2.2}), (\ref{eq2.12}) and (\ref{eq3.24L}), respectively.

\begin{theorem}
\label{thm:mularge}
Let 
\begin{equation}
\label{eq4.4a}
\hat{\xi}=\mathrm{arccosh}(\beta)
=\mathrm{arccoth}(z).
\end{equation}
Then
\begin{equation}
\label{eq4.3}
P_{\nu}^{-\mu}(z)
=\sqrt{2/\pi}\, L_{\mu-\frac12}^{u} 
\hat{\xi}^{1/2}
\left\{K_{u}(\mu \hat{\xi})A_{u}(\mu,\beta)
-\hat{\xi} K_{u+1}(\mu \hat{\xi})B_{u}(\mu,\beta)\right\},
\end{equation}
and
\begin{equation}
\label{eq4.4}
\boldsymbol{Q}^{\mu}_{\nu}(z)
=\sqrt{\tfrac{1}{2}\pi} \, L_{\mu-\frac12}^{u}
\Gamma(\mu-\nu)
 \hat{\xi}^{1/2} \left\{I_{u}(\mu \hat{\xi})A_{u}(\mu,\beta)
+\hat{\xi} I_{u+1}(\mu \hat{\xi})B_{u}(\mu,\beta)\right\},
\end{equation}
where $A_{u}(\mu,\beta)$ and $B_{u}(\mu,\beta)$ are analytic for $|\arg(z)| \leq \frac12 \pi$. Moreover, as $\mu \rightarrow \infty$, uniformly for $|\arg(z)| \leq \frac12 \pi$ and $|\beta-1| \geq \delta>0$, and $\Re(u) \geq 0$, $|u| \leq u_{0} < \infty$, they possess the asymptotic expansions
\begin{equation}
\label{eq4.5}
A_{u}(\mu,\beta) \sim 
\exp \left\{
\sum\limits_{s=1}^{\infty}
\frac{\tilde{\mathcal{E}}_{u,2s}(\beta)}{\mu^{2s}}
\right\}
\cosh \left\{ \sum\limits_{s=0}^{\infty }
\frac{\tilde{\mathcal{E}}_{u,2s+1}(\beta)}{\mu^{2s+1}}
\right\},
\end{equation}
and
\begin{equation}
\label{eq4.6}
B_{u}(\mu,\beta) \sim 
\frac{1}{\hat{\xi}}\exp \left\{
\sum\limits_{s=1}^{\infty}
\frac{\mathcal{E}_{u,2s}(\beta)}{\mu^{2s}}
\right\}
\sinh \left\{ \sum\limits_{s=0}^{\infty }
\frac{\mathcal{E}_{u,2s+1}(\beta)}{\mu^{2s+1}}
\right\},
\end{equation}
where for $s=1,2,3,\ldots$
\begin{equation}
\label{eq4.7}
\mathcal{E}_{u,s}(\beta)
=E_{u,s}(z)+(-1)^{s+1}\frac{a_{u,s}}
{s{\hat{\xi}}^{s}},
\end{equation}
and
\begin{equation}
\label{eq4.8}
\tilde{\mathcal{E}}_{u,s}(\beta)
=E_{u,s}(z)+(-1)^{s+1}\frac{a_{u+1,s}}
{s\hat{\xi}^{s}}.
\end{equation}
Here $E_{u,s}(z)$ are polynomials in $z$, given by (\ref{eq2.14}) - (\ref{eq2.17}) with $\beta$ and $\mu$ replaced by $z$ and $u$ respectively, and $a_{u,s}$ are given by (\ref{eq2.31}) and (\ref{eq2.32}) with $\mu$ replaced by $u$.
\end{theorem}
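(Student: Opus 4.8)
The plan is to obtain Theorem~\ref{thm:mularge} by exactly the same device used in \cref{sec.LargeNu}, but now with the roles of the parameters interchanged: here $\mu$ is the large parameter and $u=\nu+\frac12$ is bounded. First I would return to \cref{eq1.2x} and observe that, instead of isolating $\nu$, one should isolate $\mu$ by writing the equation in the Liouville variable $\hat\xi=\mathrm{arccosh}(\beta)=\mathrm{arccoth}(z)$ defined in \cref{eq4.4a}, where $\beta=z(z^2-1)^{-1/2}$. Under the substitution $w=(z^2-1)^{1/2}y$ together with the change of independent variable $z\mapsto\beta$ (which, via \cref{eq2.13}, carries $z^2-1$-type singularities at $z=\pm1$ to $\beta=\pm1$), \cref{eq2.3} turns into an equation of precisely the same shape but with $\mu$ as the large parameter and with $u^2-\tfrac14$ appearing where $\mu^2-\tfrac14$ sat before. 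Equivalently, one invokes the Gegenbauer identifications \cref{eq1.1e,eq1.1f} from \cite{Dunster:2024:AEG} but makes the \emph{alternative} replacement in that paper, namely $\lambda\mapsto u$ and large parameter $\mapsto\mu$, so that the polynomials $F_{u,s}$, $E_{u,s}$ (now polynomials in $z$, as stated) and the $a_{u,s}$ are the images of \cref{eq2.14,eq2.15,eq2.16,eq2.17,eq2.31,eq2.32} under $\beta\mapsto z$, $\mu\mapsto u$.

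With that identification in hand, the five-step skeleton mirrors the proof of \cref{thm:nularge}. \textbf{Step 1:} identify $P_\nu^{-\mu}(z)$ and $\boldsymbol Q^\mu_\nu(z)$ with the Gegenbauer/companion functions in the $\mu$-large normalization; the recessive solutions at $z=\infty$ (now $\hat\xi\to 0$) and at the turning-point analogue are matched to $K_u(\mu\hat\xi)$- and $I_u(\mu\hat\xi)$-type combinations, which is what produces the Bessel structure in \cref{eq4.3,eq4.4}, with the prefactor $\sqrt{2/\pi}\,L^u_{\mu-\frac12}\hat\xi^{1/2}$ absorbing the gamma ratios. \textbf{Step 2:} asymptotically evaluate the constant $L^u_{\mu-\frac12}$ by Stirling's series, exactly as in \cref{eq2.27,eq2.27b}, now as $\mu\to\infty$, to get $L^u_{\mu-\frac12}\sim(\mu^{1/2}/\Gamma(\mu+1))\exp\{\sum(-1)^{s+1}E_{u,s}(1)\mu^{-s}\}$ up to the evident relabelling. \textbf{Step 3:} define $A_u(\mu,\beta)$ and $B_u(\mu,\beta)$ by the analogues of \cref{eq3.24p,eq3.24q} and read off their asymptotic expansions \cref{eq4.5,eq4.6} from \cite[Eqs.~(3.27)--(3.28)]{Dunster:2024:AEG} together with Step~2; the coefficients \cref{eq4.7,eq4.8} are then just \cref{eq3.34,eq3.34a} with $(\beta,\mu,\xi)\mapsto(z,u,\hat\xi)$, and the remark expansion \cref{eq2.30} for $K_\mu$ supplies the $a_{u,s}$ and $a_{u+1,s}$ terms. \textbf{Step 4:} prove analyticity of $A_u$ and $B_u$ on $|\arg z|\le\frac12\pi$ (including the relevant endpoint, here $\beta=1$, i.e.\ $z=\infty$, rather than $z=1$) by solving the linear system for $A_u,B_u$ in terms of $P^{\pm\mu}_\nu$ and modified Bessel functions via the Wronskian \cite[Eq.~10.28.1]{NIST:DLMF}, exactly as in \cref{eq3.37,eq3.38}, and expanding the right-hand sides in the appropriate local variable; reality for $\nu,\mu\in\mathbb R$ and $z\in(1,\infty)$ follows from the same expansions.

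The one genuinely new point — and the step I expect to be the main obstacle — is \textbf{Step 5}, the re-examination of the Liouville--Green regions of validity under the new variable. Because the large parameter is now $\mu$ (real, since $\Re(\mu)\ge0$ and $|\mu|\to\infty$ along, effectively, the positive axis) and the Liouville variable is $\hat\xi=\mathrm{arccoth}(z)$ rather than $\xi=\mathrm{arccosh}(z)$, the admissible domains are governed by monotonicity of $\mu\Re(\hat\xi)$ along progress paths, and one must check that the image of $\{|\arg z|\le\frac12\pi,\ |\beta-1|\ge\delta\}$ under $z\mapsto\hat\xi$ is a region every point of which can be joined to the reference point (the analogue of $\xi=+\infty$, here $\hat\xi=0^+$, i.e.\ $z=\infty$) by an $R_2$-chain along which $\Re(\hat\xi)$ is monotone — the precise counterpart of the $\xi$-plane discussion following \cref{fig:Figxiplane} and of the reference-point analysis in the proof of \cref{lem.LGconical}. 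This is where one also pins down that it is $|\beta-1|\ge\delta$ (not $|z-1|\ge\delta$) that must be excluded, since the pole of the transformed equation now sits at $\beta=1$. Everything else is a routine transcription of \cref{sec.LargeNu} with the dictionary $(\nu,\mu,\beta,\xi)\leftrightarrow(\mu,u,z,\hat\xi)$, and the Cauchy-integral device of \cref{eq3.35} again covers the excluded neighbourhood $|\beta-1|<\delta$.
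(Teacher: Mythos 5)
Your plan rebuilds the large-$\mu$ theory from scratch, whereas the paper's proof is essentially a two-line reduction. By the Whipple-type formulas \cite[Eqs. 14.9.11, 14.9.16]{NIST:DLMF} one has $P^{-\mu}_{\nu}(z)=\sqrt{2/\pi}\,(z^{2}-1)^{-1/4}\boldsymbol{Q}^{u}_{\mu-\frac12}(\beta)$ and $\boldsymbol{Q}^{\mu}_{\nu}(z)=\sqrt{\pi/2}\,(z^{2}-1)^{-1/4}P^{-u}_{\mu-\frac12}(\beta)$, and since $\beta=\beta(z)$ is an involution taking the half-plane $|\arg(z)|\leq\frac12\pi$ onto itself, \cref{thm:nularge} applies \emph{verbatim} with $z\mapsto\beta$, $\mu\mapsto u$, $\nu\mapsto\mu-\frac12$. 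The representations (\ref{eq4.3})--(\ref{eq4.4}), the expansions (\ref{eq4.5})--(\ref{eq4.8}), the analyticity of $A_{u},B_{u}$, and the validity region $|\beta-1|\geq\delta$ are then all inherited; no new Liouville--Green, Stirling, or Wronskian analysis is needed. You gesture at this (the ``alternative replacement'' in the Gegenbauer identification) but do not exploit it: your Steps 1--5 amount to re-proving \cref{thm:nularge} in the transformed variables, including re-matching recessive solutions and re-deriving the connection constants that the Whipple formulas deliver for free.

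The concrete flaw is in your Step 5, which you yourself single out as the crux. Under the dictionary $(\xi;\,z=1;\,z=\infty)\leftrightarrow(\hat{\xi};\,\beta=1;\,\beta=\infty)$ the simple pole of the transformed problem sits at $\hat{\xi}=0$ ($\beta=1$, i.e.\ $z=\infty$), and the LG reference point must be the analogue of $\xi=+\infty$, namely $\hat{\xi}=+\infty$ ($\beta=\infty$, i.e.\ $z\to1$), \emph{not} $\hat{\xi}=0^{+}$ as you state. Placing the reference point at $\hat{\xi}=0$ puts it exactly at the singularity where the error-control function diverges, so the progressive-path/monotonicity analysis as you describe it would fail; you have carried over the $z$-location of the old reference point ($z=\infty$) without transcribing it through the new map, under which $z=\infty$ \emph{is} the excluded pole. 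The slip matters only because you chose the heavy route: once the Whipple reduction is made, the region $|\beta-1|\geq\delta$ is precisely the hypothesis of \cref{thm:nularge}, and the excluded neighbourhood of $\beta=1$ (large $|z|$) is then covered by the Cauchy-integral device, as the paper does following the theorem.
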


\begin{proof}
We use the Whipple-type formulas \cite[Eqs. 14.9.11, 14.9.16, 14.9.14 and 14.9.17]{NIST:DLMF}
\begin{equation}
\label{eq4.1}
P^{-\mu}_{\nu}(z)
=\sqrt{2/\pi}\left(z^{2}-1\right)^{-1/4}
\boldsymbol{Q}^{u}
_{\mu-\frac12}(\beta),
\end{equation}
and
\begin{equation}
\label{eq4.2}
\boldsymbol{Q}^{\mu}_{\nu}(z)
=\sqrt{\tfrac{1}{2}\pi}
\left(z^{2}-1\right)^{-1/4} 
P^{-u}_{\mu-\frac12}(\beta),
\end{equation}
where $u$ and $\beta$ are given by (\ref{eq2.2}) and (\ref{eq2.12}), respectively.

Now the half plane $|\arg(z)| \leq \frac12 \pi$ with a cut along $[0,1]$ and principal square root taken is mapped to the same region in the $\beta$ plane: see \cite[Figs. 4 and 5]{Dunster:2024:AEG} which illustrate the mapping of the first quadrant in the $z$ plane to the fourth quadrant of the $\beta$ plane; the fourth quadrant in the $z$ plane is similarly mapped to the first quadrant of the $\beta$ plane by taking conjugates of both these regions.

Thus from (\ref{eq4.1}) and (\ref{eq4.2}) we can directly apply (\ref{eq3.24e}) and (\ref{eq3.24ee}), with $z \mapsto \beta$, $\mu \mapsto u$ and $\nu \mapsto \mu - \frac12$ (equivalently $u \mapsto \mu$), to obtain (\ref{eq4.3}) and (\ref{eq4.4}). Similarly under these transformations the expansions (\ref{eq4.5}) - (\ref{eq4.8}) follow from (\ref{eq3.24f}) - (\ref{eq3.34a}), on noting from (\ref{eq2.12}) that $\beta = \beta(z)$ is an involution mapping, i.e. 
\begin{equation}
\label{eq4.8a}
z=\beta (\beta^2-1)^{-1/2}.
\end{equation}
\end{proof}

This time the expansions (\ref{eq4.5}) and (\ref{eq4.6}) as they stand are only practicable for bounded $|z|$, since the coefficients become unbounded as $z \to \infty$ ($\beta \to 1$). In this case we have from (\ref{eq2.12}) that $\beta$ is bounded, and so instead for large $z$ we use Cauchy's integral formula to approximate $A_{u}(\mu,\beta)$ and $B_{u}(\mu,\beta)$, in a similar manner for small $|z-1|$ in the previous section. 

To show this, let
\begin{equation}
\label{eq4.8b}
A_{u}(N,\mu,\beta) =
\exp \left\{
\sum\limits_{s=1}^{N}
\frac{\tilde{\mathcal{E}}_{u,2s}(\beta)}{\mu^{2s}}
\right\}
\cosh \left\{ \sum\limits_{s=0}^{N}
\frac{\tilde{\mathcal{E}}_{u,2s+1}(\beta)}{\mu^{2s+1}}
\right\},
\end{equation}
where $N$ is any positive integer. In this we require that $\tilde{\mathcal{E}}_{u,s}(\beta)$ ($s=1,2,3,\ldots 2N+1$), rather than polynomials in $z$, be expressed in terms of $\beta$ (see (\ref{eq4.4a}) and (\ref{eq4.8})). Thus from (\ref{eq4.8a}), for each $E_{u,s}(z)$ we replace $\beta$ by $\beta (\beta^2-1)^{-1/2}$ in $E_{\mu,s}(\beta)$ (as well as $\mu$ by $u$), instead of replacing $\beta$ by $z$. For example, from (\ref{eq2.17a}), (\ref{eq2.31}), (\ref{eq4.4a}) and (\ref{eq4.8}) we express the first coefficient in (\ref{eq4.8b}) in the form
\begin{equation*}
\label{eq2.17aa}
\tilde{\mathcal{E}}_{u,1}(\beta)
=-\frac{\left(4u^2-1\right) \beta}
{8(\beta^2-1)^{1/2}}
+\frac{4(u+1)^2-1}
{8\, \mathrm{arccosh}(\beta)}.
\end{equation*}
Then since $A_{u}(\mu,\beta)$ is analytic in the half-plane $|\arg(\beta)| \leq \frac12 \pi$ (including $\beta =1$) we have
\begin{equation}
\label{eq4.8c}
A_{u}(\mu,\beta) 
=\frac{1}{2\pi i}\oint_{\mathscr{C}} 
\frac{A_{u}(\mu,b)}{b-\beta}db
\approx
\frac{1}{2\pi i}\oint_{\mathscr{C}} 
\frac{A_{u}(N,\mu,b)}{b-\beta}db
\quad (\mu \to \infty),
\end{equation}
where $\mathscr{C}$ is a suitably chosen positively orientated closed loop in the half plane $|\arg(b)| \leq \frac12 \pi$ enclosing $b=\beta$ and $b=1$, with both these points not too close to the contour. 

Note that $\beta$ is close to $1$ for large $z$, which is why we use the contour integral, since points on the contour are bounded away from $\beta=1$ (where the coefficients in (\ref{eq4.5}) and (\ref{eq4.6}) are unbounded). Using a Cauchy integral $B_{u}(\mu,\beta)$ can similarly be approximated for large $z$ ($\beta$ close to 1).

Again as stated in \cref{thm:mularge} the bounded parameter, in this case $\nu$, does not have to be real, whether we use the expansions (\ref{eq4.5}) and (\ref{eq4.6}) directly or via Cauchy's integral formula. In the conical case $\nu = -\frac12 +i \tau$ ($u=i \tau$) we have here that $\tau >0$ is bounded. So for example, from (\ref{eq4.3}),
\begin{equation*}
\label{eq4.9}
P_{-\frac12 +i \tau}^{-\mu}(z)
=\sqrt{2/\pi}\, L_{\mu-\frac12}^{i \tau} 
\hat{\xi}^{1/2}
\left\{K_{i \tau}(\mu \hat{\xi})A_{i \tau}(\mu,\beta)
-\hat{\xi} K_{1 +i \tau}(\mu \hat{\xi})B_{i \tau}(\mu,\beta)\right\}.
\end{equation*}
If we then replace $\tau \mapsto -\tau$, add it to the original, divide by 2, refer to \cite[Eq. 10.29.1]{NIST:DLMF}, and note that $P_{-\frac12 +i \tau}^{-\mu}(z)$, $L_{\mu-\frac12}^{i \tau}$, $K_{i \tau}(\mu \hat{\xi})$ and $B_{i \tau}(\mu,\beta)$ are all even in $\tau$, we arrive at the more convenient representation
\begin{equation}
\label{eq4.10}
P_{-\frac12 +i \tau}^{-\mu}(z)
=\sqrt{2/\pi}\, L_{\mu-\frac12}^{i \tau} 
\hat{\xi}^{1/2}
\left\{K_{i \tau}(\mu \hat{\xi})\hat{A}_{i \tau}(\mu,\beta)
-\hat{\xi} K_{i \tau}^{\prime}(\mu \hat{\xi})
B_{i \tau}(\mu,\beta)\right\},
\end{equation}
where $\hat{A}_{i \tau}(\mu,\beta)$ is given by (\ref{eq3.24gg}) with $\mu$, $u$ and $z$ replaced by $i \tau$, $\mu$ and $\beta$ respectively. The expansions (\ref{eq4.5}) - (\ref{eq4.8}) then of course remain valid with $u=\pm i \tau$. An expansion for $\boldsymbol{Q}^{\mu}_{-\frac12 \pm i \tau}(z)$ comes directly from (\ref{eq4.4}) - (\ref{eq4.8}) on setting $\nu =-\frac12 \pm i \tau$ ($u =\pm i \tau$).

For $\mu=i\rho$ with $\rho \to \infty$ from \cref{thm.conical}, (\ref{eq4.1}) and (\ref{eq4.2}) we obtain the following. 

\begin{theorem}
\begin{multline}
\label{eq4.11}
P^{-i\rho}_{\nu}(z)
=-i e^{-u \pi i/2} \sqrt{\tfrac{1}{2}\pi}\,
L_{-\frac{1}{2}+i\rho}^{u} 
\\ \times
\hat{\xi}^{1/2}\left\{H_{u}^{(2)}(\rho \hat{\xi})
A_{u}(i\rho,\beta)
+i \hat{\xi} H_{u+1}^{(2)}(\rho \hat{\xi} )
B_{u}(i\rho,\beta)\right\},
\end{multline}
and
\begin{multline}
\label{eq4.12}
\boldsymbol{Q}^{i\rho}_{\nu}(z)
=e^{u \pi i/2} \sqrt{\tfrac{1}{2}\pi}\,
 L_{-\frac{1}{2}+i\rho}^{u} \, 
\Gamma\left(\tfrac12-u+i\rho\right)
\\ \times
\hat{\xi}^{1/2}\left\{J_{u}(\rho \hat{\xi})
A_{u}(i\rho,\beta)
+i \hat{\xi} J_{u+1}(\rho \hat{\xi} )
B_{u}(i\rho,\beta)\right\},
\end{multline}
where $A_{u}(i\rho,\beta)$ and $B_{u}(i\rho,\beta)$ are analytic for $|\arg(z)| \leq \frac12 \pi$, and for $\Re(u) \geq 0$, $|u| \leq u_{0} < \infty$ possess the asymptotic expansions
\begin{equation}
\label{eq4.13}
A_{u}(i\rho,\beta) \sim 
\exp \left\{
\sum\limits_{s=1}^{\infty}
(-1)^{s}\frac{\tilde{\mathcal{E}}_{u,2s}(\beta)}{\rho^{2s}}
\right\}
\cos \left\{ \sum\limits_{s=0}^{\infty } (-1)^{s}
\frac{\tilde{\mathcal{E}}_{u,2s+1}(\beta)}{\rho^{2s+1}}
\right\},
\end{equation}
and
\begin{equation}
\label{eq4.14}
B_{u}(i\rho,\beta) \sim -\frac{i}{\xi}
\exp \left\{
\sum\limits_{s=1}^{\infty}
(-1)^{s}\frac{\mathcal{E}_{u,2s}(\beta)}{\rho^{2s}}
\right\}
\sin \left\{ \sum\limits_{s=0}^{\infty } (-1)^{s}
\frac{\mathcal{E}_{u,2s+1}(\beta)}{\rho^{2s+1}}
\right\},
\end{equation}
as $\rho \to \infty$, uniformly for  $|\arg(z)| \leq \frac12 \pi$ with $|\beta-1| \geq \delta >0$.
\end{theorem}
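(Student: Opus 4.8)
The plan is to obtain this theorem from \cref{thm.conical} by the Whipple-type connection formulas (\ref{eq4.1}) and (\ref{eq4.2}), mirroring the way \cref{thm:mularge} was derived from \cref{thm:nularge}. First I would set $\mu=i\rho$ in (\ref{eq4.1}) and (\ref{eq4.2}); since $i\rho-\frac12=-\frac12+i\rho$, this expresses $P^{-i\rho}_{\nu}(z)$ as $\sqrt{2/\pi}\,(z^2-1)^{-1/4}\boldsymbol{Q}^{u}_{-\frac12+i\rho}(\beta)$ and $\boldsymbol{Q}^{i\rho}_{\nu}(z)$ as $\sqrt{\tfrac12\pi}\,(z^2-1)^{-1/4}P^{-u}_{-\frac12+i\rho}(\beta)$, i.e.\ in terms of conical functions of degree $-\frac12+i\rho$, order $u$, and argument $\beta$. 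I would then invoke \cref{thm.conical} with the replacements $z\mapsto\beta$, $\mu\mapsto u$, $\tau\mapsto\rho$. Under these, the Liouville variable $\xi=\mathrm{arccosh}(z)$ of \cref{thm.conical} becomes $\mathrm{arccosh}(\beta)=\hat\xi$ (cf.\ (\ref{eq4.4a})), and the companion variable $z(z^2-1)^{-1/2}$ appearing there is sent by the involution (\ref{eq4.8a}) back to $z$, so that the polynomials $E_{\mu,s}(\beta)$ of (\ref{eq3.34})--(\ref{eq3.34a}) become the polynomials $E_{u,s}(z)$ in $z$; this is precisely the content of the definitions (\ref{eq4.7})--(\ref{eq4.8}).

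The remaining steps are bookkeeping. Because $\beta^2-1=(z^2-1)^{-1}$, the factor $(\beta^2-1)^{-1/4}$ produced by (\ref{eq9.1b})--(\ref{eq9.1c}) after the substitution equals $(z^2-1)^{1/4}$ and cancels the $(z^2-1)^{-1/4}$ coming from (\ref{eq4.1})--(\ref{eq4.2}), while the elementary identity $\sqrt{2/\pi}\cdot\tfrac12\pi=\sqrt{\tfrac12\pi}$ reconciles the numerical constants; the $L$- and gamma-factors transcribe directly. Substituting the transformed (\ref{eq9.1c}) and (\ref{eq9.1b}) into (\ref{eq4.1}) and (\ref{eq4.2}) then yields (\ref{eq4.11}) and (\ref{eq4.12}). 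In exactly the same way the asymptotic expansions (\ref{eq4.13}) and (\ref{eq4.14}) follow from (\ref{eq3.24fff}) and (\ref{eq3.24ggg}) under $\tau\mapsto\rho$, and analyticity of $A_{u}(i\rho,\beta)$ and $B_{u}(i\rho,\beta)$ for $|\arg(z)|\leq\frac12\pi$ follows from the analyticity assertion of \cref{thm.conical}, since the excluded-neighbourhood condition $|z-1|\geq\delta$ there becomes $|\beta-1|\geq\delta$ here.

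The only genuine point to check — and the nearest thing to an obstacle — is that the domains line up: one must confirm that the cut half-plane $|\arg(z)|\leq\frac12\pi$ is carried by $\beta=z(z^2-1)^{-1/2}$ onto the cut half-plane $|\arg(\beta)|\leq\frac12\pi$, so that \cref{thm.conical} is legitimately applicable with its argument replaced by $\beta$. This was already established in the proof of \cref{thm:mularge} by reference to \cite[Figs.\ 4 and 5]{Dunster:2024:AEG}. One should also note that $\rho\hat\xi$ remains in the sectors required by the Hankel-function forms underpinning \cref{thm.conical}, which is immediate because $\hat\xi$ ranges over the strip $0\leq\Re(\hat\xi)<\infty$, $|\Im(\hat\xi)|\leq\frac12\pi$; apart from these verifications the proof is a direct transcription.
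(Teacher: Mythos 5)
Your proposal is correct and follows exactly the paper's route: the paper derives this theorem in a single line by setting $\mu=i\rho$ in the Whipple-type formulas (\ref{eq4.1})--(\ref{eq4.2}) and applying \cref{thm.conical} with $z\mapsto\beta$, $\mu\mapsto u$, $\tau\mapsto\rho$, precisely as you do, including the cancellation $(\beta^2-1)^{-1/4}=(z^2-1)^{1/4}$, the involution (\ref{eq4.8a}) turning $E_{\mu,s}(\beta)$ into $E_{u,s}(z)$, and the half-plane-preserving property of the $z\mapsto\beta$ map already used in the proof of \cref{thm:mularge}. Your verification of the constants and of the domain correspondence supplies the bookkeeping the paper leaves implicit, so no gap remains.
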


Note these expansions apply in the conical case too, by simply replacing $u$ by $i\tau$ ($\tau>0$ bounded).

\section{Expansions for Ferrers functions}
\label{sec.Ferrers}

\subsection{Large \texorpdfstring{$|\nu|$}{} with \texorpdfstring{$\mu$}{} bounded}

As $x \rightarrow 1^{-}$ we have from \cite[Eqs. 14.8.1 and 14.8.6]{NIST:DLMF}
\begin{equation}
\label{eq5.2a}
\mathsf{P}_{\nu}^{-\mu}(x)
=\frac{1}{\Gamma(\mu+1)}
\left(\frac{1-x}{2}\right)^{\mu/2}
\left\{1+\mathcal{O}(1-x)\right\}
\quad (\mu \neq -1,-2,-3, \cdots),
\end{equation}
and
\begin{multline}
\label{eq5.2b}
\mathsf{Q}_{\nu}^{-\mu}(x)
=\frac{\Gamma(\mu)\Gamma(\nu-\mu+1)}
{2\Gamma(\nu+\mu+1)}
\left(\frac{2}{1-x}\right)^{\mu/2}
\left\{1+\mathcal{O}(1-x)\right\}
\\ 
\quad (\nu \pm \mu \neq -1,-2,-3, \cdots).
\end{multline}

The important property of $\mathsf{P}_{\nu}^{-\mu}(x)$ is that it is recessive at $x=1$, whereas $\mathsf{Q}_{\nu}^{-\mu}(x)$ is dominant under the above parameter conditions. Note also from \cite[Eqs. 14.2.6 and 14.9.1]{NIST:DLMF}
\begin{equation*}
\label{eq5.16b}
\mathscr{W}\left\{\mathsf{P}_{\nu}^{-\mu}(x),
\mathsf{Q}_{\nu}^{-\mu}(x)\right\}
=\frac{\Gamma(\nu-\mu+1)}{\Gamma(\nu+\mu+1)
\left(1-x^2\right)}.
\end{equation*}
Thus from (\ref{eq5.2a}) and (\ref{eq5.2b}) they form a numerically satisfactory pair in $[0,1)$ as $|\nu| \to \infty$ with $\mu$ bounded and non-negative. In this case we have the following for $x$ in this interval, with extension to $x \in (-1,0)$ achieved from the connection formulas \cite[Eqs. 14.9.8 and 14.9.10]{NIST:DLMF}.

\begin{theorem}
Let $L_{\nu}^{\mu}$ be given by (\ref{eq3.24L}) and for $x \in [0,1)$
\begin{equation}
\label{eq5.4}
\eta=\mathrm{arccos}(x) \in [0,1),
\quad
\gamma=x \left(1-x^2\right)^{-1/2} \in (0,\infty).
\end{equation}
Then
\begin{multline}
\label{eq5.9}
\mathsf{P}_{\nu}^{-\mu}(x)
=L_{\nu}^{\mu} \, \Gamma(\nu-\mu+1)
 \left(1-x^2\right)^{-1/4}
\\ \times
 \eta^{1/2} \left\{J_{\mu}(u \eta)A_{\mu}(u,x)
-\eta J_{\mu+1}(u \eta)B_{\mu}(u,x)\right\},
\end{multline}
and 
\begin{multline}
\label{eq5.10}
\mathsf{Q}_{\nu}^{-\mu}(x)
=-\tfrac12 \pi L_{\nu}^{\mu} \, \Gamma(\nu-\mu+1)
 \left(1-x^2\right)^{-1/4}
\\ \times
 \eta^{1/2} \left\{Y_{\mu}(u \eta)A_{\mu}(u,x)
-\eta Y_{\mu+1}(u \eta)B_{\mu}(u,x)\right\},
\end{multline}
where
\begin{equation}
\label{eq5.5}
A_{\mu}(u,x) \sim 
\exp \left\{
\sum\limits_{s=1}^{\infty}
\frac{\tilde{\mathcal{F}}_{\mu,2s}(x)}{u^{2s}}
\right\}
\cos \left\{ \sum\limits_{s=0}^{\infty }
\frac{i\tilde{\mathcal{F}}_{\mu,2s+1}(x)}{u^{2s+1}}
\right\},
\end{equation}
and
\begin{equation}
\label{eq5.6}
B_{\mu}(u,x) \sim 
\frac{1}{\eta}\exp \left\{
\sum\limits_{s=1}^{\infty}
\frac{\mathcal{F}_{\mu,2s}(x)}{u^{2s}}
\right\}
\sin \left\{\sum\limits_{s=0}^{\infty }
\frac{i\mathcal{F}_{\mu,2s+1}(x)}{u^{2s+1}}
\right\},
\end{equation}
as $u=\nu +\frac12 \rightarrow \infty$, uniformly for $0 \leq x \leq 1-\delta<1$ and $0 \leq \mu \leq \mu_{0}< \infty $. Here for $s=1,2,3,\ldots$
\begin{equation}
\label{eq5.7}
\mathcal{F}_{\mu,s}(x)
=E_{\mu,s}(i\gamma)-(-i)^{s}\frac{a_{\mu,s}}
{s\eta^{s}},
\end{equation}
and
\begin{equation}
\label{eq5.8}
\tilde{\mathcal{F}}_{\mu,s}(x)
=E_{\mu,s}(i\gamma)-(-i)^{s}\frac{a_{\mu+1,s}}
{s\eta^{s}}.
\end{equation}
\end{theorem}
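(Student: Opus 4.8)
The plan is to derive this theorem from \cref{thm:nularge} by analytic continuation of the argument from $z \in (1,\infty)$ to $z = x \in (-1,1)$, following the standard device of rotating $z \to 1 + (z-1)e^{-\pi i}$ (equivalently passing below the cut). First I would record the relation between the Ferrers functions and the associated Legendre functions on the cut, namely (\ref{eq5.1}) and (\ref{eq5.2}), which express $\mathrm{P}_{\nu}^{-\mu}(x)$ and $\mathrm{Q}_{\nu}^{-\mu}(x)$ in terms of the boundary values $P_{\nu}^{-\mu}(x \pm i0)$ and $\boldsymbol{Q}_{\nu}^{\mu}(x \pm i0)$. I would then track what the Liouville variable $\xi = \operatorname{arccosh}(z)$ of (\ref{eq2.6}) becomes under this continuation: as $z \to x \in (0,1)$ from the appropriate side, $\xi \to \mp i\eta$ with $\eta = \arccos(x)$, so $u\xi$ becomes purely imaginary and the modified Bessel functions $I_\mu, K_\mu$ turn into the ordinary Bessel functions $J_\mu$ and the combination $Y_\mu$ via the standard connection formulas \cite[Eqs. 10.27.6, 10.27.8, 10.27.10, 10.27.11]{NIST:DLMF}. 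Likewise $\beta = z(z^2-1)^{-1/2}$ continues to $i\gamma$ with $\gamma = x(1-x^2)^{-1/2}$, and $(z^2-1)^{-1/4}$ becomes a unimodular multiple of $(1-x^2)^{-1/4}$. Keeping careful account of the phase factors $e^{\mp\mu\pi i/2}$ appearing in (\ref{eq5.1})–(\ref{eq5.2}), together with those produced by the Bessel connection formulas, should yield exactly (\ref{eq5.9}) and (\ref{eq5.10}), with the coefficient functions $A_\mu(u,x)$, $B_\mu(u,x)$ obtained as the continuations of $A_\mu(u,z)$, $B_\mu(u,z)$.

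Next I would deal with the asymptotic expansions (\ref{eq5.5}) and (\ref{eq5.6}). Since $A_\mu(u,z)$ and $B_\mu(u,z)$ are analytic for $|\arg z| \le \frac12\pi$ including $z=1$ (by \cref{thm:nularge}), their continuations to $x \in [0,1)$ are well-defined and the expansions (\ref{eq3.24f}), (\ref{eq3.24g}) continue with $\xi \mapsto -i\eta$, $\beta \mapsto i\gamma$. Under this substitution $E_{\mu,s}(\beta) \mapsto E_{\mu,s}(i\gamma)$ and $\xi^{-s} \mapsto (-i\eta)^{-s} = i^s \eta^{-s}$, so that $\mathcal{E}_{\mu,s}(z) \mapsto E_{\mu,s}(i\gamma) + (-1)^{s+1} a_{\mu,s} i^s/(s\eta^s)$; matching this against the definition (\ref{eq5.7}) of $\mathcal{F}_{\mu,s}(x)$ is just the bookkeeping identity $(-1)^{s+1} i^s = -(-i)^s$, and similarly for $\tilde{\mathcal F}_{\mu,s}$ via (\ref{eq5.8}). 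The hyperbolic functions $\cosh, \sinh$ of a sum whose $s$-th term acquires an extra factor $i^s$ (from $\xi^{-s}$) become, after separating even and odd $s$, the $\cos$ and $\sin$ of the real/imaginary parts as displayed — this is where the factor $i\tilde{\mathcal F}_{\mu,2s+1}/u^{2s+1}$ inside the cosine in (\ref{eq5.5}) and the factor $-i/\xi$ type rescaling in $B$ come from; one checks that the $1/\eta$ prefactor in (\ref{eq5.6}) arises from $1/\xi \mapsto 1/(-i\eta) = i/\eta$ absorbed together with the $i$ from the Bessel connection. I would also note, as in the proofs of \cref{thm:nularge,thm.conical}, that reality of $A_\mu(u,x)$, $B_\mu(u,x)$ for real parameters and $x \in (-1,1)$ follows either from a Taylor expansion about $x=1$ of the explicit Wronskian-type formulas analogous to (\ref{eq3.37})–(\ref{eq3.38}), or directly from the known reality of $\mathrm{P}_\nu^{-\mu}(x)$ and $\mathrm{Q}_\nu^{-\mu}(x)$.

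Finally I would address the region of validity. The expansions of \cref{thm:nularge} hold for $|\arg z| \le \frac12\pi$, $|z-1|\ge\delta$; the continuation path from $z>1$ down and around to $x\in(0,1)$ stays in the domain of validity (the LG regions, as remarked after \cref{thm:nularge}, are in fact larger), so the continued expansions remain asymptotic, uniformly for $0 \le x \le 1-\delta$. For $x$ near $1$ one again invokes Cauchy's integral formula exactly as in (\ref{eq3.35}), using analyticity of $A_\mu(u,x)$, $B_\mu(u,x)$ at $x=1$. The extension to $x \in (-1,0)$ is then immediate from the connection formulas \cite[Eqs. 14.9.8 and 14.9.10]{NIST:DLMF} as already stated before the theorem.

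\textbf{Main obstacle.} The genuinely delicate step is the precise accounting of the branch choices and phase factors: one must choose consistently whether to continue $z \to x$ from above or below the cut, propagate that choice through $\xi = \mp i\eta$, through $(z^2-1)^{\pm1/4}$, through each Bessel connection formula \cite[Eqs. 10.27.6--10.27.11]{NIST:DLMF} (which themselves carry $e^{\pm\mu\pi i/2}$ and $e^{\pm(\mu+1)\pi i/2}$ factors), and through the defining relations (\ref{eq5.1})–(\ref{eq5.2}), verifying at the end that all spurious phases cancel to leave the real-analytic expressions (\ref{eq5.9})–(\ref{eq5.10}) with manifestly real coefficient expansions (\ref{eq5.5})–(\ref{eq5.6}). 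A convenient internal consistency check is to confirm the leading behaviour: the continued right-hand side of (\ref{eq5.9}) as $x \to 1^-$ must reproduce $\Gamma(\mu+1)^{-1}\big((1-x)/2\big)^{\mu/2}$ from (\ref{eq5.2a}), using $J_\mu(u\eta) \sim (u\eta/2)^\mu/\Gamma(\mu+1)$ and the small-argument forms of $\eta$ and $A_\mu$; passing this check pins down all the phases unambiguously.
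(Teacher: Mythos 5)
Your proposal is correct and follows essentially the same route as the paper: the paper's proof likewise records the boundary values $\xi(x\pm i0)=\pm i\eta$, $\beta(x\pm i0)=\mp i\gamma$, obtains (\ref{eq5.9}) and (\ref{eq5.10}) from (\ref{eq3.24e}), (\ref{eq3.24ee}) together with (\ref{eq5.1}), (\ref{eq5.2}) and the Bessel connection formulas \cite[Eqs. 10.27.6, 10.27.10]{NIST:DLMF}, and then reads off (\ref{eq5.5})--(\ref{eq5.8}) by substituting these boundary values into (\ref{eq3.24f})--(\ref{eq3.34a}), exactly the bookkeeping you describe (including the identity $(-1)^{s+1}i^{s}=-(-i)^{s}$). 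Your extra checks (reality, the $x\to1^-$ limit, handling of $x$ near $1$) go beyond what the paper's terse proof states but are consistent with its remarks.
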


\begin{remark}
Since $E_{\mu,2j}(\beta)$ and $E_{\mu,2j-1}(\beta)$ ($j=1,2,3,\ldots$) are even and odd functions, respectively, it follows that $\mathcal{F}_{\mu,2j}(x)$ and $i\mathcal{F}_{\mu,2j-1}(x)$ are real. For example from (\ref{eq2.17a}), (\ref{eq2.31}), (\ref{eq5.7}) and (\ref{eq5.8})
\begin{equation*}
\label{eq5.7a}
i\mathcal{F}_{\mu,1}(x)
=\frac18 \left(4\mu^2-1\right)
\left(\gamma-\frac{1}{\eta}\right), \;
\mathcal{F}_{\mu,2}(x)
=\frac{1}{16} \left(4\mu^2-1\right)
\left(\gamma^2 + \frac{1}{\eta^2}\right),
\end{equation*}
and
\begin{multline*}
\label{eq5.8a}
i\tilde{\mathcal{F}}_{\mu,1}(x)
=\frac18 
\left\{ \left(4\mu^2-1\right) \gamma
-\frac{4(\mu+1)^2-1}{\eta}\right\},
\\
\tilde{\mathcal{F}}_{\mu,2}(x)
=\frac{1}{16}
\left\{ \left(4\mu^2-1\right) \gamma^{2}
+\frac{4(\mu+1)^2-1}{\eta^{2}}\right\},
\end{multline*}
where $\eta=\eta(x)$ and $\gamma=\gamma(x)$ are given by (\ref{eq5.4}).
\end{remark}

\begin{remark}
\label{remark:trad}
To asymptotically approximate the coefficients near $x=1$ we can formally expand (\ref{eq5.5}) and (\ref{eq5.6}) in traditional asymptotic forms involving inverse powers of $u$, namely
\begin{equation}
\label{eq6.29}
A_{\mu}(u,x) \sim 
1+\sum\limits_{s=1}^{\infty}
\frac{\mathrm{A}_{\mu,2s}(x)}{u^{2s}},
\end{equation}
and
\begin{equation}
\label{eq6.30}
B_{\mu}(u,x) \sim 
\sum\limits_{s=0}^{\infty}
\frac{\mathrm{B}_{\mu,2s+1}(x)}{u^{2s+1}}.
\end{equation}

Both sets of coefficients are readily obtained explicitly, and are polynomials in $\gamma$ and $\eta^{-1}$. By virtue of \cite[Thm. 3.1]{Dunster:2021:NKF} and temporarily regarding $x \in \mathbb{C}$, one finds that each coefficient $\mathrm{A}_{\mu,2s}(x)$ and $\mathrm{B}_{\mu,2s+1}(x)$ is bounded, and in fact has a removable singularity, at $x=1$, even though the coefficients $\tilde{\mathcal{F}}_{\mu,s}(x)$ and $\mathcal{F}_{\mu,s}(x)$ in (\ref{eq5.5}) and (\ref{eq5.6}) are unbounded there. Thus if we define $\mathrm{A}_{\mu,2s}(1)=\lim_{x \to 1}\mathrm{A}_{\mu,2s}(x)$ and $\mathrm{B}_{\mu,2s+1}(1)=\lim_{x \to 1}\mathrm{B}_{\mu,2s+1}(x)$ then from \cite[Thm. 3.1]{Dunster:2021:NKF} it follows that (\ref{eq6.29}) and (\ref{eq6.30}) hold uniformly for $0 \leq x \leq 1$. As mentioned earlier, this method can also be applied to the coefficient functions in \cref{thm:nularge,thm.conical,thm:mularge} as an alternative to using Cauchy's integral formula near the pole.
\end{remark}

\begin{proof}
From (\ref{eq2.6})  and (\ref{eq2.12}) 
\begin{equation}
\label{eq5.3}
\xi(x \pm i0) =\pm i \eta,\quad 
\beta(x \pm i0) = \mp i \gamma,
\end{equation}
where $\eta$ and $\gamma$ are given by (\ref{eq5.4}). Next (\ref{eq5.9}) follows from (\ref{eq3.24e}), (\ref{eq5.1}), (\ref{eq5.3}) and \cite[Eq. 10.27.6]{NIST:DLMF}, and likewise (\ref{eq5.10}) is obtained from (\ref{eq3.24ee}), (\ref{eq5.2}), (\ref{eq5.3}) and \cite[Eq. 10.27.10]{NIST:DLMF}.

Finally, (\ref{eq3.24f}) - (\ref{eq3.34a}), (\ref{eq5.3}) and (\ref{eq5.4}) confirm that the coefficient functions possess the asymptotic expansions (\ref{eq5.5}) and (\ref{eq5.6}).
\end{proof}

For the conical function $\mathsf{P}^{-\mu}_{-\frac{1}{2}+i\tau}(x)$ we have the following.

\begin{theorem}
Let be $L_{-\frac{1}{2}+i\tau}^{\mu}$ be given by (\ref{eq3.24j}). Then
\begin{multline}
\label{eq5.11}
\mathsf{P}^{-\mu}_{-\frac{1}{2}+i\tau}(x)
= e^{\mu \pi i/2}L_{-\frac{1}{2}+i\tau}^{\mu} \, 
\Gamma\left(i\tau-\mu+\tfrac12 \right)
\left(1-x^2\right)^{-1/4} 
\\ \times
\eta^{1/2}\left\{I_{\mu}(\tau \eta)
A_{\mu}(i\tau,x)
- i \eta I_{\mu+1}(\tau \eta )
B_{\mu}(i\tau,x)\right\},
\end{multline}
where
\begin{equation}
\label{eq5.12}
A_{\mu}(i\tau,x) \sim 
\exp \left\{
\sum\limits_{s=1}^{\infty}
(-1)^{s}\frac{\tilde{\mathcal{F}}_{\mu,2s}(x)}{\tau^{2s}}
\right\}
\cosh \left\{ \sum\limits_{s=0}^{\infty } (-1)^{s}
\frac{\tilde{i\mathcal{F}}_{\mu,2s+1}(x)}{\tau^{2s+1}}
\right\},
\end{equation}
and
\begin{equation}
\label{eq5.13}
B_{\mu}(i\tau,x) \sim \frac{i}{\eta}
\exp \left\{
\sum\limits_{s=1}^{\infty}
(-1)^{s}\frac{\mathcal{F}_{\mu,2s}(x)}{\tau^{2s}}
\right\}
\sinh \left\{ \sum\limits_{s=0}^{\infty } (-1)^{s}
\frac{i\mathcal{F}_{\mu,2s+1}(x)}{\tau^{2s+1}}
\right\},
\end{equation}
as $\tau \rightarrow \infty$, uniformly for $0 \leq x \leq 1-\delta<1$ and $0 \leq \mu \leq \mu_{0}< \infty$.
\end{theorem}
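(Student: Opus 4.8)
The plan is to derive this conical Ferrers expansion by the same mechanism used for the previous theorem: start from the complex-variable conical representation in \cref{thm.conical} and specialise the argument to the cut by letting $z = x \pm i0$ with $x \in [0,1)$. The key inputs are the boundary values (\ref{eq5.3}), namely $\xi(x\pm i0) = \pm i\eta$ and $\beta(x\pm i0) = \mp i\gamma$, together with the definition (\ref{eq5.1}) expressing $\mathrm{P}^{-\mu}_{\nu}(x)$ as $e^{\mp\mu\pi i/2}P^{-\mu}_{\nu}(x\pm i0)$. Taking $\nu = -\frac12 + i\tau$ in (\ref{eq9.1b}) and substituting $u\xi = i\tau \cdot (\pm i\eta) = \mp\tau\eta$ converts the Bessel functions $J_{\mu}(\tau\xi)$ into $J_{\mu}(\mp\tau\eta)$; I would then use the connection formula between $J_{\mu}$ of an imaginary argument and the modified Bessel function $I_{\mu}$ (\cite[Eq. 10.27.6]{NIST:DLMF}, exactly as in the proof of (\ref{eq5.9})) to rewrite these in terms of $I_{\mu}(\tau\eta)$ and $I_{\mu+1}(\tau\eta)$. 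Bookkeeping the factors $e^{\mp\mu\pi i/2}$, the phase from $\xi^{1/2} = (\pm i\eta)^{1/2}$, the factor $(z^2-1)^{-1/4}$ versus $(1-x^2)^{-1/4}$, and the $i\xi$ in front of the second Bessel term should collapse everything to (\ref{eq5.11}); one must check that the two choices of sign ($x+i0$ versus $x-i0$) yield the same real-analytic answer, which they must since $\mathrm{P}^{-\mu}_{-\frac12+i\tau}(x)$ is single-valued on $(-1,1)$.

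For the coefficient expansions (\ref{eq5.12}) and (\ref{eq5.13}), I would start from the already-established asymptotic expansions (\ref{eq3.24fff}) and (\ref{eq3.24ggg}) for $A_{\mu}(i\tau,z)$ and $B_{\mu}(i\tau,z)$ in the complex variable, and simply evaluate them at $z = x \pm i0$. Under (\ref{eq5.3}) the coefficients transform as $E_{\mu,s}(\beta) \mapsto E_{\mu,s}(\mp i\gamma)$ and $\xi^{-s} \mapsto (\pm i\eta)^{-s}$; using the established parity of $E_{\mu,s}$ (even for even $s$, odd for odd $s$), one sees $E_{\mu,2j}(\mp i\gamma) = E_{\mu,2j}(i\gamma)$ while $E_{\mu,2j-1}(\mp i\gamma) = \mp E_{\mu,2j-1}(i\gamma)$; combining this with $(\pm i\eta)^{-s}$ and the $(-1)^s$ already present in (\ref{eq3.24fff})--(\ref{eq3.24ggg}) should reproduce exactly the combinations $\tilde{\mathcal{F}}_{\mu,s}(x) = E_{\mu,s}(i\gamma) - (-i)^s a_{\mu+1,s}/(s\eta^s)$ and $\mathcal{F}_{\mu,s}(x)$ defined in (\ref{eq5.7})--(\ref{eq5.8}). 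The conversion of $\cos$ to $\cosh$ (and $\sin$ to $\sinh$) in going from (\ref{eq3.24fff}) to (\ref{eq5.12}) comes from the purely imaginary argument that the odd-order sum acquires. Because $A_{\mu}(i\tau,z)$ is analytic at $z=1$ (established in \cref{thm.conical}) and the expansion (\ref{eq3.24fff}) holds uniformly for $|\arg(z)|\le\frac12\pi$, $|z-1|\ge\delta$, restricting to the segment $z = x\pm i0$ with $0\le x\le 1-\delta$ preserves uniformity automatically.

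The step I expect to be the main obstacle is the careful tracking of branch choices and phase factors so that the two analytic continuations from above and below the cut agree and give the manifestly real structure asserted in (\ref{eq5.11})--(\ref{eq5.13}). In particular one must reconcile the $e^{\mu\pi i/2}$ prefactor, the square-root phase in $\xi^{1/2}=(\pm i\eta)^{1/2}$, and the Hankel/Bessel connection formulas so that no spurious imaginary part survives; the earlier remark following \cref{thm.conical} (concerning $e^{\pm\mu\pi i/2}L^\mu_{-\frac12+i\tau}\Gamma(\tfrac12\mp\mu+i\tau)$ being real up to an exponentially small imaginary term) will be needed to confirm consistency with the known reality of $\mathrm{P}^{-\mu}_{-\frac12+i\tau}(x)$. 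A secondary, purely mechanical point is verifying that the sign in front of $i\eta I_{\mu+1}$ in (\ref{eq5.11}) and the sign of the $i/\eta$ prefactor in (\ref{eq5.13}) are consistent; this is best checked by evaluating both sides as $x\to 1^-$ against the limiting form (\ref{eq5.2a}) using the small-argument behaviour of $I_\mu$ from \cite[Eq. 10.25.2]{NIST:DLMF}, exactly as was done for the $z\to1$ check after \cref{thm:nularge}.
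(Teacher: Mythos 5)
Your proposal follows essentially the same route as the paper: the paper obtains (\ref{eq5.11}) precisely from (\ref{eq5.1}), (\ref{eq9.1b}), (\ref{eq5.3}) and the $J$--$I$ connection formula \cite[Eq. 10.27.6]{NIST:DLMF}, and derives (\ref{eq5.12})--(\ref{eq5.13}) by evaluating (\ref{eq3.24fff}) and (\ref{eq3.24ggg}) on the cut via (\ref{eq5.3}) and matching with the definitions (\ref{eq5.7})--(\ref{eq5.8}), exactly as you describe. Your additional checks (consistency of the $x\pm i0$ branches, reality, and the $x\to 1^{-}$ limit) are sound but not needed beyond what the paper's short argument already supplies.
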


\begin{proof}
From (\ref{eq5.1}), (\ref{eq9.1b}), (\ref{eq5.3}) and \cite[Eq. 10.27.6]{NIST:DLMF} one obtains (\ref{eq5.11}). The expansions (\ref{eq5.12}) and (\ref{eq5.13}) are derived from (\ref{eq3.24fff}), (\ref{eq3.24ggg}), (\ref{eq5.3}), (\ref{eq5.7}) and (\ref{eq5.8}).
\end{proof}

Again near $x=1$ one can re-expand (\ref{eq5.12}) and (\ref{eq5.13}) in the form similar to (\ref{eq6.29}) and (\ref{eq6.30}) which provides expansions that are valid near and at $x=1$. Note that a similar relation for $\mathsf{Q}^{\mu}_{-\frac{1}{2}+i\tau}(x)$ is obtainable from (\ref{eq5.2}), (\ref{eq9.1c}) and (\ref{eq5.3}).

\subsection{Large \texorpdfstring{$\mu$}{}}

Rather than using the complex-valued expansions as we did above, we obtain expansions in terms of elementary functions (as opposed to modified Bessel functions) directly from the differential equation (\ref{eq1.2x}) with $z=x \in (-1,1)$. Removing the first derivative in a standard manner with the new dependent variable $y(x)=(1-x^2)^{1/2}w(x)$ results in it being recast in the form (see \cite[Eq. (4.1)]{Dunster:2003:ALF})
\begin{equation}
\label{eq5.14}
\frac{d^2 y}{dx^2} = \left\{
\mu^2 \mathrm{f}(\alpha,x) + \mathrm{g}(x)\right\} y,
\end{equation}
where
\begin{equation}
\label{eq5.15}
\mathrm{f}(\alpha,x)
=\frac {1-\alpha^{2} \left(1-x^{2}\right) }
{\left(1-x^{2}\right) ^{2}}, \;
 \mathrm{g}(x) = - \frac{x^{2}+3}
{4\left(1-x^{2}\right) ^{2}}.
\end{equation}
Note that $\mathrm{f}$ and $\mathrm{g}$ differ from $f$ and $g$ as given by (\ref{eq2.4}).

Next let 
\begin{equation}
\label{eq5.16}
\alpha=u/\mu=\left(\nu+\tfrac12\right)/\mu, \;
\tilde{\alpha} = \tau/\mu.
\end{equation}
We consider $\mu \to \infty$ for two cases: (i) $\nu \in \mathbb{R}$ such that $0 \leq \alpha \leq 1-\delta$, and (ii) $\nu = -\frac12 + i \tau$ with $\tau >0$ and $0 < \tilde{\alpha} \leq \tilde{\alpha}_{0}<\infty$ where $\tilde{\alpha}_{0}$ is arbitrary. Thus in both of these $|\nu|$ does not have to be bounded.

For case (i) $\mathrm{f}(\alpha,x)$ has turning points on the imaginary axis at $x=\pm i\alpha^{-1}(1-\alpha^2)^{1/2}$ which are bounded away from $(-1,1)$ for  $0 \leq \alpha \leq 1 -\delta$. For case (ii) we have from (\ref{eq5.16}) that $\alpha$ is replaced by $i\tilde{\alpha}$. Thus in (\ref{eq5.19}) $\mathrm{f}(\alpha,x)$ is replaced by
\begin{equation}
\label{eq5.26}
\mathrm{f}(i\tilde{\alpha},x)
=\frac {\tilde{\alpha}^{2} \left(1-x^{2}\right)+1 }
{\left(1-x^{2}\right) ^{2}}.
\end{equation}

As we mentioned, the ODE (\ref{eq5.14}) follows from (\ref{eq1.2x}) and it has solutions $(1-x^2)^{1/2}\mathsf{P}_{\nu}^{-\mu}(\pm x)$ which we shall approximate.  From \cite[Eqs. 14.8.1 and 14.2.3]{NIST:DLMF}
\begin{equation*}
\label{eq5.16a}
\mathscr{W}\left\{\mathsf{P}_{\nu}^{-\mu}(x),
\mathsf{P}_{\nu}^{-\mu}(-x)\right\}
=\frac{2}{\Gamma(\mu-\nu)\Gamma(\nu+\mu+1)
\left(1-x^2\right)}.
\end{equation*}
Note in both our cases described above $\nu-\mu \neq 0, 1, 2, \ldots$, and $\nu + \mu \neq -1, -2, -3, \ldots$, and thus they are linearly independent. These then form a numerically satisfactory pair of solutions for $x \in (-1,1)$ due to their respective recessive behaviour at the singularity $x= \pm 1$; see (\ref{eq5.2a}).

\begin{theorem}
Let $\alpha=(\nu+\frac12)/\mu$,
\begin{equation}
\label{eq5.20}
p=x \left\{1-\alpha^{2} \left(1-x^{2}\right)\right\}^{-1/2},
\end{equation}
and
\begin{equation}
\label{eq5.17}
\chi = \frac{1}{2}\ln\left(\frac{1+p}{1-p}\right)
+\frac{\alpha}{2}
\ln\left(\frac{1-\alpha p}{1+\alpha p}\right).
\end{equation}
Define 
\begin{equation}
\label{eq5.24}
\mathrm{E}_{s}(\alpha,p) =\left(1-\alpha^{2}\right)
\int_{0}^{p}
\frac{\mathrm{F}_{s}(\alpha,q)}
{\left(1-q^2\right)\left(1-\alpha^2 q^2\right)} d q
\quad (s=1,2,3,\ldots),
\end{equation}
where
\begin{equation}
\label{eq5.21}
\mathrm{F}_{1}(\alpha,p)
=\frac{\left(1-p^2\right)\left(1-\alpha^2 p^2\right)
\left\{\alpha^2\left(1-5 p^2\right)+1\right\}}
{8\left(1-\alpha^2\right)^2},
\end{equation}
\begin{equation}
\label{eq5.22}
\mathrm{F}_{2}(\alpha,p)
=\frac{p\left(1-p^2\right)\left(1-\alpha^2 p^2\right)
\left\{
\alpha^4 \left(15p^4 - 12p^2 + 1 \right)
+\alpha^2 \left(7-12p^2\right)+1
\right\}}
{8\left(1-\alpha^2\right)^3},
\end{equation}
and
\begin{multline}
\label{eq5.23}
\mathrm{F}_{s+1}(\alpha,p)
=\frac{\left(1-p^2\right)\left(\alpha^2 p^2-1\right)}
{2\left(1-\alpha^{2}\right)}
\frac{d\mathrm{F}_{s}(\alpha,p)}{d p}
\\
-{\frac{1}{2}}\sum\limits_{j=1}^{s-1}
\mathrm{F}_{j}(\alpha,p) \mathrm{F}_{s-j}(\alpha,p)
\quad (s=2,3,4,\ldots).
\end{multline}
Then $\mathrm{E}_{s}(\alpha,p)$ are polynomials in $p$, and
\begin{multline}
\label{eq5.25}
\mathsf{P}_{\nu}^{-\mu}(\pm x)
=\frac{\sqrt{\pi}
\left(1-\alpha^{2}\right)^{1/4} p^{1/2}}{2^{\mu}
\Gamma(\tfrac12 \nu +\tfrac12 \mu+1)
\Gamma(\tfrac12 \mu+\tfrac12-\tfrac12 \nu) x^{1/2}}
\\ \times
\exp\left\{\mp \mu \chi 
+\sum\limits_{s=1}^{N-1}
(\mp 1)^{s}\frac{\mathrm{E}_{s}(\alpha,p)}{\mu^{s}}
\right\}
\left\{1+\mathcal{O}
\left(\frac{x}{\mu^{N}}\right)
\right\}
\quad (\mu \to \infty),
\end{multline}
for arbitrary positive integer $N$, uniformly for $-1<x<1$ and $0 \leq \alpha \leq 1- \delta<1$. 

Further, let $\tilde{\alpha}=\tau/\mu$, $\tilde{p}$ be given by (\ref{eq5.20})) with $\alpha^2$ replaced by $-\tilde{\alpha}^2$, and
\begin{equation}
\label{eq5.28}
\tilde{\chi}
=\frac{1}{2}\ln\left(\frac{1+\tilde{p}}
{1-\tilde{p}}\right)
+\frac{\tilde{\alpha}}{2}
\arctan\left(\frac{2\tilde{\alpha} \tilde{p}}
{1-\tilde{\alpha}^2 \tilde{p}^2}\right).
\end{equation}
Then for $N$ again an arbitrary positive integer
\begin{multline}
\label{eq5.27}
\mathsf{P}_{-\frac12+i\tau}^{-\mu}(\pm x)
=\frac{\sqrt{\pi}
\left(1+\tilde{\alpha}^{2}\right)^{1/4} \tilde{p}^{1/2}}
{2^{\mu}|\Gamma(\tfrac12 \mu+\frac34+\frac12 i\tau)|^{2}x^{1/2}}
\\ \times
\exp\left\{\mp \mu \tilde{\chi}
+\sum\limits_{s=1}^{N-1}
(\mp 1)^{s}\frac{\mathrm{E}_{s}
(i\tilde{\alpha},\tilde{p})}{\mu^{s}}
\right\}
\left\{1+\mathcal{O}
\left(\frac{x}{\mu^{N}}\right)
\right\}
\quad (\mu \to \infty),
\end{multline}
uniformly for $-1<x<1$ with $\tilde{\alpha} \in (0,\infty)$ fixed.
\end{theorem}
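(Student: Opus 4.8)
The plan is to treat the final theorem as a \emph{direct} application of the Liouville--Green (LG) expansions of exponential form \cite{Dunster:2020:LGE} to the differential equation (\ref{eq5.14}) on the interval $(-1,1)$, in the same spirit as the rest of the paper. The key structural observation is that on $(-1,1)$ the equation (\ref{eq5.14}) is turning-point free: the zeros of $\mathrm{f}(\alpha,x)$ in (\ref{eq5.15}) lie at $x=\pm i\alpha^{-1}(1-\alpha^{2})^{1/2}$, bounded away from the interval when $0\le\alpha\le1-\delta$, while in case (ii) the zeros of $\mathrm{f}(i\tilde\alpha,x)$ in (\ref{eq5.26}) lie at $x=\pm(1+\tilde\alpha^{-2})^{1/2}$, outside $[-1,1]$ for any fixed $\tilde\alpha$. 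Moreover $\mathrm{f}$ and $\mathrm{g}$ have double poles at $x=\pm1$ with matching principal parts, so — as explained in the introduction — the relevant LG error integrals converge at the endpoints and the resulting expansions are uniformly valid on the whole open interval.

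First I would introduce the Liouville variable $\chi=\int\mathrm{f}^{1/2}(\alpha,x)\,dx$, normalised so that $\chi=0$ at $x=0$ (and $\chi\to+\infty$ as $x\to1^{-}$), and verify via the substitution $p=x\{1-\alpha^{2}(1-x^{2})\}^{-1/2}$ of (\ref{eq5.20}) that $\chi$ has the closed form (\ref{eq5.17}); the same substitution gives $1-x^{2}=(1-p^{2})/(1-\alpha^{2}p^{2})$ and $1-\alpha^{2}(1-x^{2})=(1-\alpha^{2})/(1-\alpha^{2}p^{2})$, hence $d\chi=(1-\alpha^{2})\{(1-p^{2})(1-\alpha^{2}p^{2})\}^{-1}dp$, which is exactly the weight in (\ref{eq5.24}). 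Feeding $\mathrm{f},\mathrm{g}$ into the LG recursion \cite[Eq. (1.10)]{Dunster:2020:LGE} and rewriting each successive derivative through $d/dp=(d\chi/dp)^{-1}d/d\chi$ then reproduces the functions $\mathrm{F}_{s}(\alpha,p)$ of (\ref{eq5.21})--(\ref{eq5.23}), the coefficient in front of $d/dp$ in (\ref{eq5.23}) being precisely $-\tfrac12(d\chi/dp)^{-1}$, while $\mathrm{E}_{s}(\alpha,p)=\int_{0}^{p}\mathrm{F}_{s}\,d\chi$ is (\ref{eq5.24}). That each $\mathrm{E}_{s}$ is a polynomial in $p$ follows by induction exactly as for $E_{\mu,s}(\beta)$ earlier: every $\mathrm{F}_{s}$ is divisible by $(1-p^{2})(1-\alpha^{2}p^{2})$ (the factor sits both in the coefficient of the derivative term and in each product $\mathrm{F}_{j}\mathrm{F}_{s-j}$), so the integrand in (\ref{eq5.24}) is polynomial; moreover $\mathrm{F}_{s}$ has parity $(-1)^{s+1}$ in $p$, whence $\mathrm{E}_{s}(\alpha,-p)=(-1)^{s}\mathrm{E}_{s}(\alpha,p)$ and $\mathrm{E}_{s}(\alpha,0)=0$. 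This parity, together with $\chi(\alpha,-p)=-\chi(\alpha,p)$, is what lets the single formula (\ref{eq5.25}) cover both $\mathrm{P}_{\nu}^{-\mu}(x)$ and $\mathrm{P}_{\nu}^{-\mu}(-x)$ via $x\mapsto-x$.

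With the formal expansion and its error bounds from \cite[Thm. 1.1]{Dunster:2020:LGE} in hand — the reference point taken at $x=1$, where $\mathrm{P}_{\nu}^{-\mu}$ is recessive by (\ref{eq5.2a}), and $\chi$ monotone along $(-1,1)$ — one gets $\mathrm{P}_{\nu}^{-\mu}(x)=C'\,\mathrm{f}^{-1/4}\exp\{-\mu\chi+\sum_{s=1}^{N-1}(-1)^{s}\mathrm{E}_{s}(\alpha,p)/\mu^{s}\}\{1+O(1/\mu^{N})\}$ uniformly on $(-1,1)$, the sharpening to $O(x/\mu^{N})$ coming from the vanishing of $\chi$ and all the $\mathrm{E}_{s}$ at $x=0$ (the expansion is exact there, consistent with the closed form \cite[Eq. 14.5.1]{NIST:DLMF} for $\mathrm{P}_{\nu}^{-\mu}(0)$; this also explains the $(1-\alpha^{2})^{1/4}p^{1/2}x^{-1/2}$ prefactor, since $\mathrm{f}^{-1/4}(\alpha,x)=(1-x^{2})^{1/2}(1-\alpha^{2}p^{2})^{1/4}(1-\alpha^{2})^{-1/4}$ and $p/x\to(1-\alpha^{2})^{-1/2}$ as $x\to0$). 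What remains — and this is where I expect the real work to lie — is pinning down $C'$. Letting $x\to1^{-}$, where $e^{-\mu\chi}\sim\{(1-\alpha^{2})(1-x)/2\}^{\mu/2}\{(1+\alpha)/(1-\alpha)\}^{\mu\alpha/2}$, one matches against (\ref{eq5.2a}); the exponentially large and small factors $(1-\alpha)^{\mu(1-\alpha)/2}(1+\alpha)^{\mu(1+\alpha)/2}$ must be absorbed into $2^{-\mu}/\{\Gamma(\tfrac12\mu(1+\alpha)+\tfrac34)\Gamma(\tfrac12\mu(1-\alpha)+\tfrac14)\}$ by Stirling's series \cite[Chap. 8, Eq. (4.04)]{Olver:1997:ASF} applied to the two gamma arguments (both large, with $\alpha$ ranging over $[0,1-\delta]$), the subdominant part being matched against $\exp\{\sum_{s}(-1)^{s}\mathrm{E}_{s}(\alpha,1)/\mu^{s}\}$ — in complete analogy with (\ref{eq2.27})--(\ref{eq2.27b}), and consistent with the constant already obtained in \cite{Dunster:2003:ALF} from which (\ref{eq5.14}) is taken. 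Finally case (ii) follows by continuation in the parameter $\alpha\mapsto i\tilde\alpha$ (equivalently $\alpha^{2}\mapsto-\tilde\alpha^{2}$): the Liouville variable becomes $\tilde\chi$ of (\ref{eq5.28}) on using $\arctan\{2t/(1-t^{2})\}=2\arctan t$, the gamma product collapses to $|\Gamma(\tfrac12\mu+\tfrac34+\tfrac12 i\tau)|^{2}$, and the regions of validity are re-examined — now immediate since $\mathrm{f}(i\tilde\alpha,x)>0$ has no finite turning points — giving (\ref{eq5.27}) uniformly for fixed $\tilde\alpha\in(0,\infty)$.
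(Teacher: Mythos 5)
Your construction of the expansion itself is essentially the paper's own proof: you recast the problem via (\ref{eq5.14})--(\ref{eq5.15}), observe that on $(-1,1)$ there are no turning points for $0\le\alpha\le 1-\delta$ (and after $\alpha^{2}\mapsto-\tilde\alpha^{2}$ for fixed $\tilde\alpha$), pass to the variable $p$ so that $d\chi/dp=(1-\alpha^{2})/\{(1-p^{2})(1-\alpha^{2}p^{2})\}$, generate $\mathrm{F}_{s}$ and $\mathrm{E}_{s}$ from the exponential-form LG recursion of \cite{Dunster:2020:LGE}, prove polynomiality and parity by induction on the factor $(1-p^{2})(1-\alpha^{2}p^{2})$, and handle the conical case by the substitution $\alpha^{2}\mapsto-\tilde\alpha^{2}$ together with the $\arctan$ rewriting of $\tilde\chi$. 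All of this matches the paper.

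Where you genuinely diverge, and where your plan has a gap, is the determination of the proportionality constant and of the $\mathcal{O}(x/\mu^{N})$ error factor. You propose to fix the constant by matching as $x\to1^{-}$ against (\ref{eq5.2a}) and then invoking Stirling's series, with the subdominant series matched against $\exp\{\sum_{s}(-1)^{s}\mathrm{E}_{s}(\alpha,1)/\mu^{s}\}$ ``in complete analogy with (\ref{eq2.27})''. But (\ref{eq2.27}) is itself a nontrivial identity imported from \cite{Dunster:2024:AEG}, where it had to be proven; the analogue you need here --- that the Stirling expansion of $2^{\mu}\Gamma\bigl(\tfrac12\mu(1+\alpha)+\tfrac34\bigr)\Gamma\bigl(\tfrac12\mu(1-\alpha)+\tfrac34\bigr)/\{\sqrt{\pi}\,\Gamma(\mu+1)\}$, combined with the algebraic powers of $1\pm\alpha$ coming from $e^{-\mu\chi}$, reproduces $\exp\{\sum_{s}(-1)^{s}\mathrm{E}_{s}(\alpha,1)/\mu^{s}\}$ uniformly in $\alpha\in[0,1-\delta]$ --- is asserted, not established, and is of comparable difficulty to the step it is meant to carry out. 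The paper sidesteps this entirely: since $\chi$ and every $\mathrm{E}_{s}$ vanish at $x=0$, and $\mathrm{P}_{\nu}^{-\mu}(0)$ is known exactly from \cite[Eq. 14.5.1]{NIST:DLMF}, the constant is fixed \emph{exactly} by matching at $x=0$, and it is precisely this normalization that produces the factor $x$ in the error term (the relative error becomes a difference of LG error terms that vanishes at $x=0$). Your own justification of the $\mathcal{O}(x/\mu^{N})$ form --- that the expansion is exact at $x=0$ --- holds only under that $x=0$ normalization, not under your $x\to1^{-}$ matching, where the constant is recovered only to relative accuracy $1+\mathcal{O}(\mu^{-N})$. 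So either adopt the evaluation at $x=0$ via 14.5.1 (which makes the Stirling matching unnecessary), or supply a proof of the uniform gamma-ratio identity; as written, the exact gamma-function prefactor in (\ref{eq5.25})--(\ref{eq5.27}) and the $x$-dependence of the error term are not justified.
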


\begin{proof}
As in \cref{lem.LGconical} the expansions on the RHS of (\ref{eq5.25}) come from the LG asymptotic solutions of (\ref{eq5.14}) provided by \cite[Eqs. (1.1), (1.3), (1.4), (1.6), (1.10) - (1.12), (1.14) - (1.23)]{Dunster:2020:LGE}, with $z$ and $u$ of that reference replaced by $x$ and $\mu$, respectively. On replacing $\xi$ with $\chi$ in \cite[Eq. (1.3)]{Dunster:2020:LGE} (to avoid confusion with (\ref{eq2.13a})) the Liouville variable in the expansions is given by
\begin{equation}
\label{eqchi}
\chi = \int_{0}^{x} \mathrm{f}^{1/2}(\alpha,t) dt.
\end{equation}
The variable $p$ defined by (\ref{eq5.20}) yields simpler expressions (see \cite[Eq. (4.11)]{Dunster:2003:ALF}). Thus on using this and (\ref{eq5.15}) one obtains (\ref{eq5.17}) upon explicit integration. 

The coefficients in the LG expansions are given by \cite[Eqs. (1.10) - (1.12)]{Dunster:2020:LGE}, with the first two intermediary coefficients given in general by 
\begin{equation}
\label{eq5.19}
\mathrm{F}_{1}
=\tfrac12 \psi, \;
\mathrm{F}_{2}
=-\tfrac14 d\psi/d\chi.
\end{equation}
From (\ref{eq5.15}) and \cite[Eq. (1.6)]{Dunster:2020:LGE} the Schwarzian derivative $\psi=\psi(\alpha,x)$ here is explicitly given by
\begin{multline}
\label{eq5.18}
\psi(\alpha,x) =\frac{4 \mathrm{f}(\alpha,x) 
\mathrm{f}^{\prime \prime }(\alpha,x) 
-5 \mathrm{f}^{\prime 2}(\alpha,x) }
{16 \mathrm{f}^{3}(\alpha,x) }
+\frac{\mathrm{g}(x) }{\mathrm{f}(\alpha,x) }
\\
=\frac{\left(1-x^2\right)
\left\{1- 4\alpha^2 x^2-\alpha^4 (1-x^2 ) \right\}}
{4\left\{1 - \alpha^2\left(1-x^2\right) \right\}^3},
\end{multline}
with primes denoting derivatives with respect to $x$. This is analytic at $x=\pm 1$; this is a consequence of (\ref{eq5.15}) and \cite[Chap. 10, Thm. 4.1]{Olver:1997:ASF}. Now from (\ref{eq5.15}) and (\ref{eq5.17})
\begin{equation}
\label{eq5.20a}
\frac{dp}{d\chi}
=\frac{\left(1-p^2\right)\left(1-\alpha^2 p^2\right)}
{1-\alpha^{2}}.
\end{equation}
Consequently from (\ref{eq5.20}), (\ref{eq5.17}) and (\ref{eq5.19}) - (\ref{eq5.20a}) we obtain the coefficients (\ref{eq5.21}) and (\ref{eq5.22}). The subsequent $\mathrm{F}_{s}$ coefficients come from (\ref{eq5.20a}) and \cite[Eq. (1.12)]{Dunster:2020:LGE}, and in terms of the variable $p$ are the polynomials given recursively by (\ref{eq5.23}). By induction it is readily verifiable that they all have the factor  $(1-p^2)(\alpha^2 p^2-1)$. Thus the coefficients $\mathrm{E}_{s}(\alpha,p)$ are then also evidently polynomials in $p$ as given by (\ref{eq5.24}), which itself follows from (\ref{eq5.20a}) and \cite[Eq. (1.10)]{Dunster:2020:LGE}.

Next on matching solutions that are recessive at $x=\pm 1$ one arrives at (\ref{eq5.25}). The proportionality constants here come from matching solutions at $x=0$ and referring to \cite[Eq. 14.5.1]{NIST:DLMF}, which also gives the term $x$ appearing in the order term in (\ref{eq5.25}). 

Finally, when $\nu = -\frac12 + i \tau$ the only change is that $\mathrm{f}(\alpha,x)$ is replaced by $\mathrm{f}(i\tilde{\alpha},x)$ as given by (\ref{eq5.26}). Hence in (\ref{eq5.20}) - (\ref{eq5.25}) we replace $\alpha^2$ by $-\tilde{\alpha}^2$. Moreover, equation (\ref{eq5.14}) now has turning points at $x=\pm \tilde{\alpha}^{-1}(1+\tilde{\alpha}^2)^{1/2}$ which are bounded away from $(-1,1)$ for $\tau >0$ and $0 < \tilde{\alpha} \leq \tilde{\alpha}_{0}<\infty$ where $\tilde{\alpha}_{0}$ is arbitrary. This confirms that (\ref{eq5.27}) is valid for this parameter range.
\end{proof}

\section{Numerical results}
\label{sec.numerics}

From \cite[Eqs. 14.2.11, 14.3.10 and 14.9.14]{NIST:DLMF} the functions $R_{\nu}^{\mu}(z)$, $\mathrm{R}_{\nu}^{\mu}(x)$ and $\mathrm{S}_{\nu}^{\mu}(x)$ are all identically equal to $1$, where for $z \in \mathbb{C}$
\begin{equation}
\label{eq6.1}
R_{\nu}^{\mu}(z)
=\Gamma(\nu+\mu+2)\left\{
(\mu-\nu+1)P_{\nu}^{-\mu}(z)
\boldsymbol{Q}_{\nu+1}^{\mu}(z)
+P_{\nu+1}^{-\mu}(z)\boldsymbol{Q}_{\nu}^{\mu}(z)
\right\},
\end{equation}
and for $x \in (-1,1)$
\begin{equation}
\label{eq6.2}
\mathrm{R}_{\nu}^{\mu}(x)
=\frac{\Gamma(\nu+\mu+2)}{\Gamma(\nu-\mu+1)}
\left\{\mathsf{P}_{\nu+1}^{-\mu}(x)
\mathsf{Q}_{\nu}^{-\mu}(x)
-\mathsf{P}_{\nu}^{-\mu}(x)
\mathsf{Q}_{\nu+1}^{-\mu}(x)\right\},
\end{equation}
\begin{equation}
\label{eq6.3}
\mathrm{S}_{\nu}^{\mu}(x)
=\tfrac12 \Gamma(\nu+\mu+2)\Gamma(\mu-\nu)
\left\{\mathsf{P}_{\nu+1}^{-\mu}(x)
\mathsf{P}_{\nu}^{-\mu}(-x)
+\mathsf{P}_{\nu}^{-\mu}(x)
\mathsf{P}_{\nu+1}^{-\mu}(-x)\right\}.
\end{equation}

We shall insert our truncated asymptotic approximations for the associated Legendre and Ferrers functions in all three of these to determine how close to 1 they are for some chosen values of the parameters. 

Starting with (\ref{eq6.1}), and based on \cref{thm:nularge}, for positive integer $N$ define
\begin{equation}
\label{eq6.4}
A_{\mu}(N,u,\beta) =
\exp \left\{
\sum\limits_{s=1}^{N}
\frac{\tilde{\mathcal{E}}_{\mu,2s}(z)}{u^{2s}}
\right\}
\cosh \left\{ \sum\limits_{s=0}^{N}
\frac{\tilde{\mathcal{E}}_{\mu,2s+1}(z)}{u^{2s+1}}
\right\},
\end{equation}
and
\begin{equation}
\label{eq6.5}
B_{\mu}(N,u,\beta) = 
\frac{1}{\xi}\exp \left\{
\sum\limits_{s=1}^{N}
\frac{\mathcal{E}_{\mu,2s}(z)}{u^{2s}}
\right\}
\sinh \left\{ \sum\limits_{s=0}^{N}
\frac{\mathcal{E}_{\mu,2s+1}(z)}{u^{2s+1}}
\right\},
\end{equation}
where here and in (\ref{eq3.34}) and (\ref{eq3.34a}) $\xi$ is given in terms of $\beta$ by (\ref{eq2.13a}). Then from (\ref{eq3.24e}) and (\ref{eq3.24ee}) for large $u$
\begin{multline}
\label{eq6.6}
P_{\nu}^{-\mu}(z) \approx I_{\mu}(N,u,\beta)
:= L_{u-\frac12}^{\mu} \, \Gamma(u-\mu+\tfrac12)
 \left(\beta^2-1\right)^{1/4}
\\ \times
 \xi^{1/2} \left\{I_{\mu}(u \xi)A_{\mu}(N,u,\beta)
+\xi I_{\mu+1}(u \xi)B_{\mu}(N,u,\beta)\right\},
\end{multline}
and
\begin{multline}
\label{eq6.7}
\boldsymbol{Q}^{\mu}_{\nu}(z) 
\approx K_{\mu}(N,u,\beta)
:= L_{u-\frac12}^{\mu} \, 
\left(\beta^2-1\right)^{1/4} 
\\ \times
\xi^{1/2}
\left\{K_{\mu}(u \xi)A_{\mu}(N,u,\beta)
-\xi K_{\mu+1}(u \xi)B_{\mu}(N,u,\beta)\right\}.
\end{multline}

From (\ref{eq6.1}) we test how close the approximation $R_{\mu}(N,u,\beta)$ is to the value 1, where on recalling $u=\nu+\frac12$,
\begin{multline}
\label{eq6.8}
R_{\mu}(N,u,\beta)
=\Gamma\left(u+\mu+\tfrac32 \right) 
\\ \times
\left\{
(\mu-u+\tfrac32)I_{\mu}(N,u,\beta)
K_{\mu}(N,u+1,\beta)
+I_{\mu}(N,u+1,\beta)K_{\mu}(N,u,\beta)
\right\}.
\end{multline}
Thus let
\begin{equation}
\label{eq6.8a}
\Delta_{\mu}(N,u,\beta)
=R_{\mu}(N,u,\beta)-1,
\end{equation}
and consider the absolue value of this function along the semi-circle, line segment, and unbounded line in the $z$ plane, as depicted in \cref{fig:Fig1} connecting the points labelled $\mathsf{A}$ $\mathsf{B}$, $\mathsf{C}$ and $\mathsf{D}$; the corresponding curves in the $\beta$ plane are shown in \cref{fig:Fig2}. Here the point labelled $\mathsf{D}$ in the $z$ plane is assumed to be arbitrarily large along the positive imaginary axis, and correspondingly in the $\beta$ plane the point is arbitrarily close to $\beta=1$. 

\begin{figure}[H]
 \centering
 \includegraphics[
 width=0.9\textwidth,keepaspectratio]{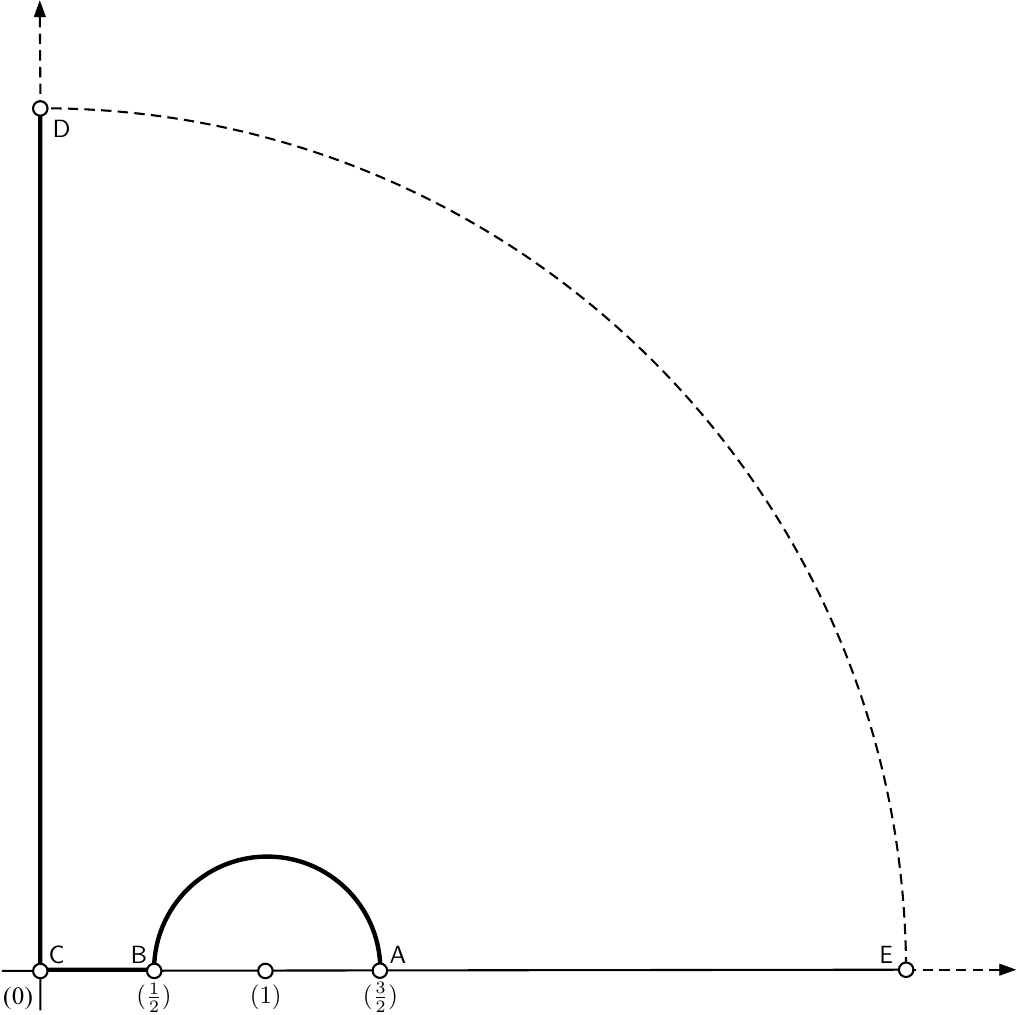}
 \caption{$z$ plane.}
 \label{fig:Fig1}
\end{figure}

We choose the values $u=20.8$, $\mu=4.2$ and $N=11$, and in \cref{fig:Fig3,fig:Fig4,fig:Fig5} graphs of $|\Delta_{4.2}(11,20.8,\beta)|$ along these three curves are shown. From these we computed that 
\begin{equation}
\label{eq6.8b}
|\Delta_{4.2}(11,20.8,\beta)|
\leq 1.18724 \cdots
\times 10^{-16},
\end{equation}
with the maximum attained on the curve $\mathsf{A}\mathsf{B}$ at the point corresponding to $z=1+0.5e^{i\theta}$ with $\theta = 0.90632\cdots$ ($z \approx 1.30832+0.39362 \,i$, $\beta \approx 1.22484 - 0.30671 \,i$). From the maximum modulus theorem, and the Schwarz reflection principle, we conclude that the bound (\ref{eq6.8b}) holds for $|z-1| \geq \frac12$ with $|\arg(z)|\leq \frac12 \pi$.

\begin{figure}
 \centering
 \includegraphics[
 width=0.7\textwidth,keepaspectratio]{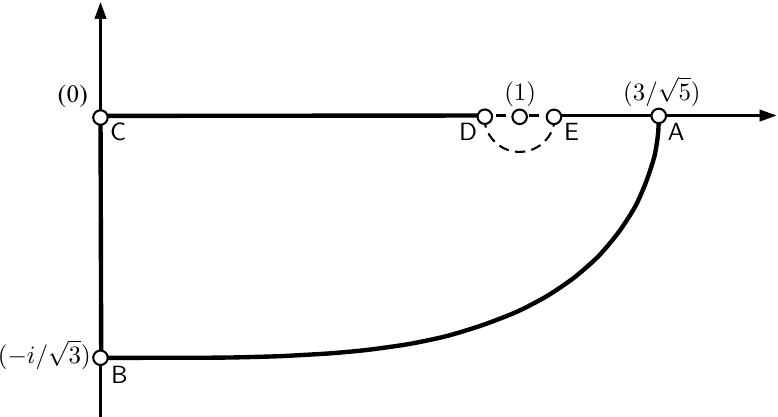}
 \caption{$\beta$ plane.}
 \label{fig:Fig2}
\end{figure}

\begin{figure}
 \centering
 \includegraphics[
 width=0.7\textwidth,keepaspectratio]{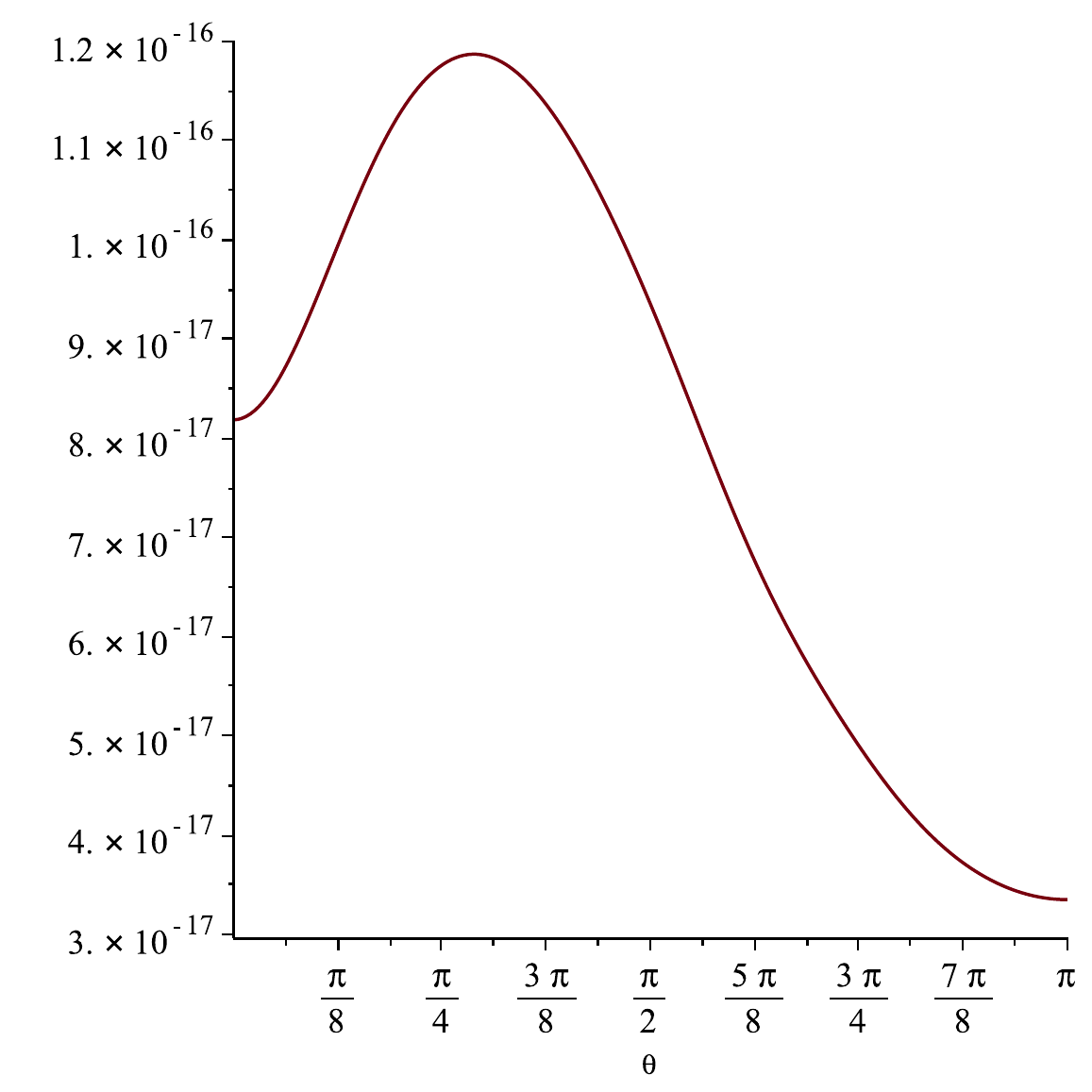}
 \caption{Graph of $|\Delta_{4.2}(11,20.8,\beta)|$ for $\beta$ lying on the curve $\mathsf{A}\mathsf{B}$ corresponding to $z=1+0.5e^{i\theta}$, $0 \leq \theta \leq \pi$.}
 \label{fig:Fig3}
\end{figure}

\begin{figure}
 \centering
 \includegraphics[
 width=0.7\textwidth,keepaspectratio]{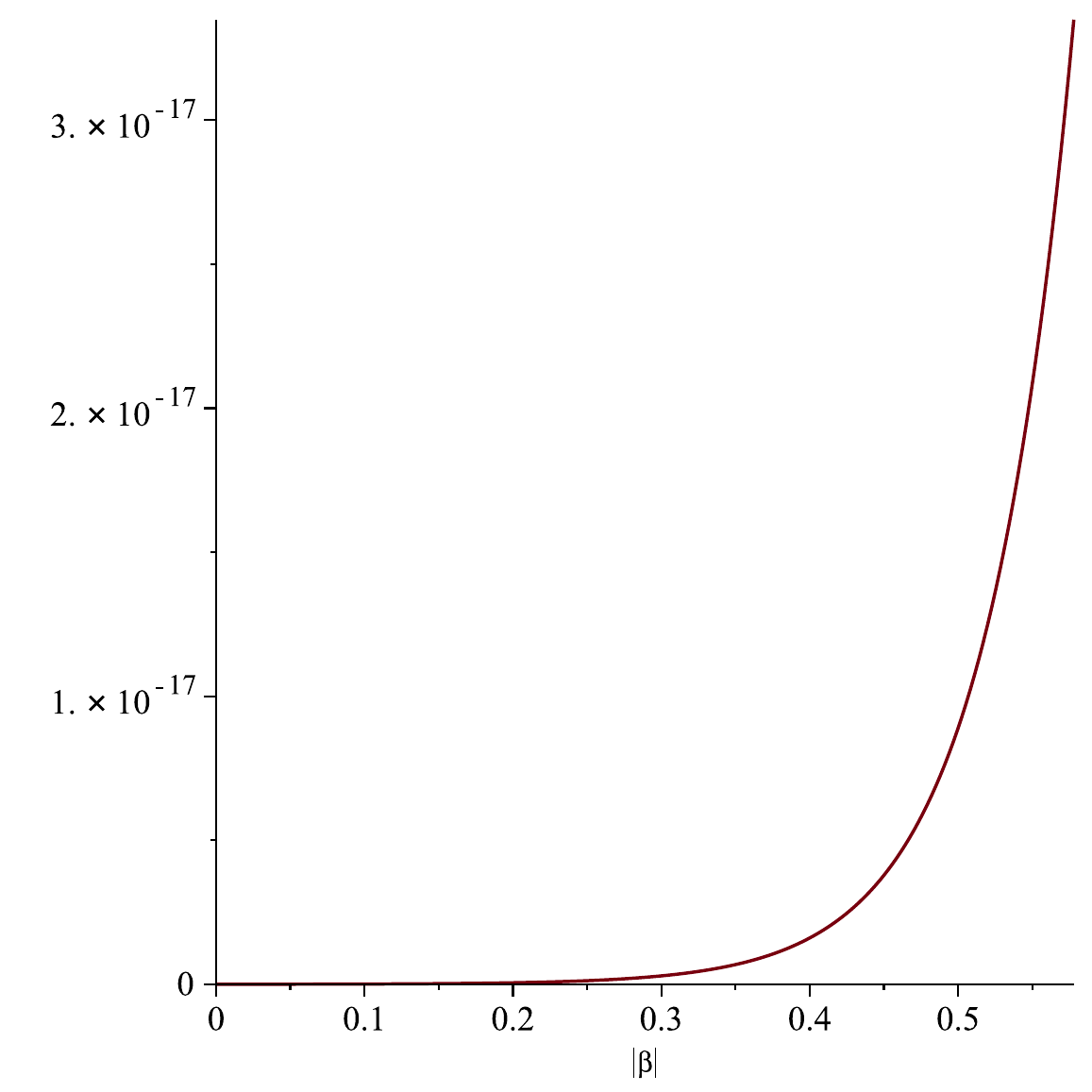}
 \caption{Graph of $|\Delta_{4.2}(11,20.8,\beta)|$ for $\beta=-iy$, $0\leq y \leq 1/\sqrt{3}$.}
 \label{fig:Fig4}
\end{figure}

\begin{figure}
 \centering
 \includegraphics[
 width=0.7\textwidth,keepaspectratio]{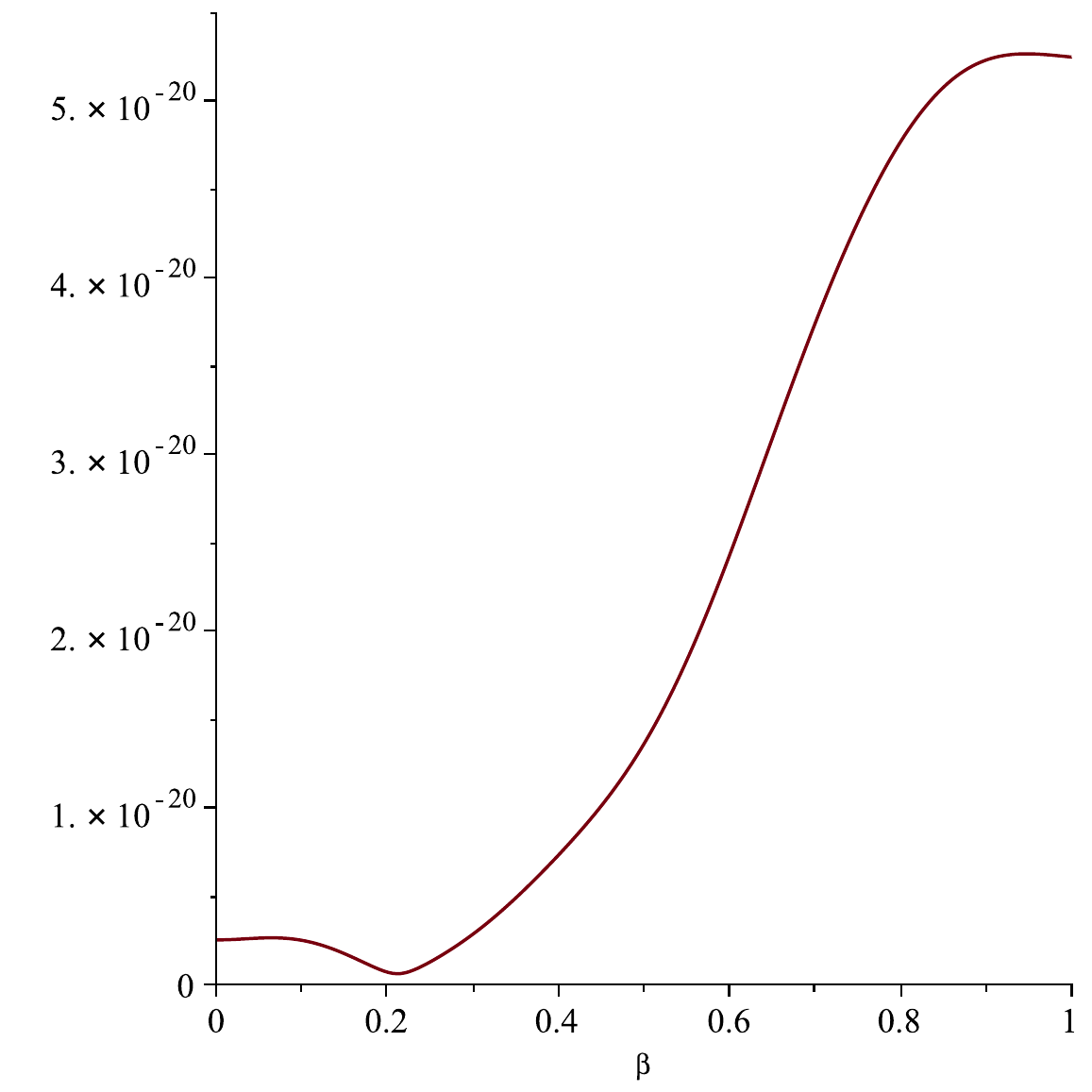}
 \caption{Graph of $|\Delta_{4.2}(11,20.8,\beta)|$ for $0\leq \beta \leq 1$.}
 \label{fig:Fig5}
\end{figure}

For $|z-1| \leq \frac12$ we return to using $z$ instead of $\beta$ and test the identity
\begin{equation}
\label{eq6.20}
R_{\nu}^{\mu}(z)
=\frac{1}{2\pi i}\oint_{\mathscr{C}} 
\frac{R_{\nu}^{\mu}(t)}{t-z}dt = 1,
\end{equation}
where $\mathscr{C}$ must enclose $z$ and lie in the half-plane $|\arg(t)|\leq \frac12 \pi$, which we choose to be the unit circle centred at $t=1$, orientated positively.

We again replace $P$ and $\boldsymbol{Q}$ in (\ref{eq6.1}) by their approximations $I_{\mu}(N,u,\beta)$ and $K_{\mu}(N,u,\beta)$, these being given by (\ref{eq6.6}) and (\ref{eq6.7}), and we regard this as a function of $z$ rather than $\beta$ (see (\ref{eq2.12})). Let us denote this approximation by $\hat{R}_{\mu}(N,u,z)$. Then from (\ref{eq6.20}) consider the following approximation to $R_{\nu}^{\mu}(z)$
\begin{equation}
\label{eq6.24}
\frac{1}{2\pi i}\oint_{\mathscr{C}} 
\frac{\hat{R}_{\mu}(N,u,t)}{t-z}dt = 
1+\hat{\delta}_{\mu}(N,u,z),
\end{equation}
where
\begin{equation}
\label{eq6.25}
\hat{\delta}_{\mu}(N,u,z)
=\frac{1}{2\pi i}\oint_{\mathscr{C}} 
\frac{\hat{R}_{\mu}(N,u,t)-1}{t-z}dt.
\end{equation}
In order to bound this error term parameterise $t=1+e^{i\theta}$ ($-\pi \leq \theta \leq \pi$) and then define
\begin{equation}
\label{eq6.22}
\hat{\Delta}_{\mu}(N,u,\theta)
=\hat{R}_{\mu}\left(N,u,1+e^{i \theta}\right)-1.
\end{equation}
We seek the maximum of the absolute value of this, and in doing so only need to consider $0 \leq \theta \leq \pi$ by virtue of Schwarz symmetry. Accordingly, again with $u=20.8$, $\mu=4.2$ and $N=11$, we find
\begin{equation}
\label{eq6.23}
|\hat{\Delta}_{4.2}(11,20.8,\theta)|
\leq 1.305412279 \cdots \times 10^{-19}
\quad (-\pi \leq \theta \leq \pi),
\end{equation}
with the maximum achieved at $\theta=\pm 0.47449\cdots$; see \cref{fig:Fig6}.

\begin{figure}
 \centering
 \includegraphics[
 width=0.7\textwidth,keepaspectratio]{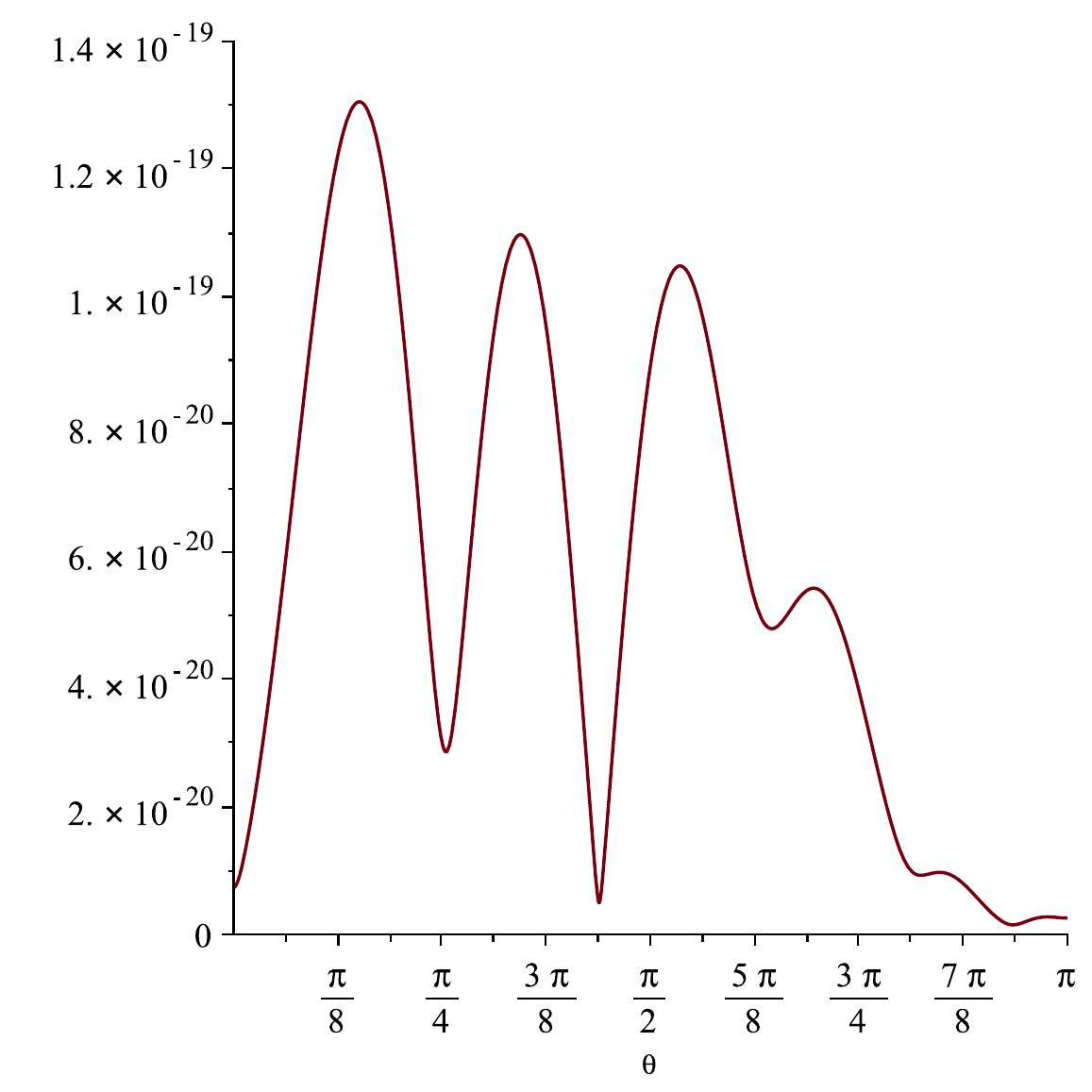}
 \caption{Graph of $|\hat{\Delta}_{4.2}(11,20.8,\theta)|$ for  $0\leq \theta \leq \pi$.}
 \label{fig:Fig6}
\end{figure}

It then follows from (\ref{eq6.22}), (\ref{eq6.23}) and (\ref{eq6.25}) for $u=20.8$, $\mu=4.2$, $N=11$ and $|z-1| \leq \frac12$
\begin{equation}
\label{eq6.26}
|\hat{\delta}_{4.2}(11,20,z)|
< \frac{1.30541228 \times 10^{-19} l_{0}(z)}{2\pi},
\end{equation}
where
\begin{equation}
\label{eq6.27}
l_{0}(z)=
\oint_{\mathscr{C}} 
\left|\frac{dt}{t-z}\right|
=\int_{-\pi}^{\pi}
\frac{d\theta}{\left|e^{i\theta}+1-z\right|}.
\end{equation}

This can be expressed as an elliptic integral, and analysed accordingly (see \cite[Lemma 4.1]{Dunster:2021:SEB}), but here we have explicit values for the parameters in the cited lemma and hence we take a simpler approach. It is straightforward to show that $l_{0}(z)$ is constant along any circle centred at $z=1$, so to maximise it in the disk $|z-1| \leq \frac12$ it suffices to consider $z=x \in [\frac12,\frac32]$. Then we find numerically that the maximum is attained at both end points of this interval, with the value at both being $6.74300141925 \cdots$. Thus from this value inserted into (\ref{eq6.26}) and from (\ref{eq6.20}) we arrive at our desired bound
\begin{equation}
\label{eq6.28}
\left|\frac{1}{2\pi i}\oint_{\mathscr{C}} 
\frac{\hat{R}_{4.2}(11,20,t)}{t-z}dt-1\right|
< 1.40095 \times 10^{-19}
\quad (|z-1| \leq \tfrac12).
\end{equation}

Consider next $\mathrm{R}_{\nu}^{\mu}(x)$ given by (\ref{eq6.2}), which is also identically equal to 1. First we re-expand (\ref{eq5.5}) and (\ref{eq5.6}) as in (\ref{eq6.29}) and (\ref{eq6.30}), taking $N$ terms in both sums, where as noted the coefficients have a removable singularity at $x=1$. Next in (\ref{eq5.9}) and (\ref{eq5.10}) we replace $A_{\mu}(u,x)$ and $B_{\mu}(u,x)$ by these approximations, and then insert these into (\ref{eq6.2}). If we denote this by $\mathrm{R}_{\mu}(N,u,x)$ we find for $u=20.8$, $\mu=4.2$ and $N=5$ that (see \cref{fig:Fig7})

\begin{equation}
\label{eq6.31}
\sup_{0\leq x \leq 1}|\mathrm{R}_{4.2}(5,20.8,x)-1|
= |\mathrm{R}_{4.2}(5,20.8,1)-1|
=4.626048 \cdots \times 10^{-11}.
\end{equation}

Finally, to check an approximation to $\mathrm{S}_{\nu}^{\mu}(x)$ given by (\ref{eq6.3}) (which again is identically equal to 1), we set the $\mathcal{O}(x\mu^{-N})$ term in (\ref{eq5.25}) to zero and insert these approximations into $\mathrm{S}_{\nu}^{\mu}(x)$, and denote this by $\mathrm{S}_{\nu}^{\mu}(N,x)$. We compute for $\nu=4.8$, $\mu=20.3$ and $N=10$ that
\begin{equation}
\label{eq6.32}
\sup_{0\leq x \leq 1}|\mathrm{S}_{4.8}^{20.3}(10,x)-1|
=9.884448 \cdots \times 10^{-12},
\end{equation}
with the supremum attained at $x = 0.331819 \cdots$ (see \cref{fig:Fig8}).

\begin{figure}[H]
 \centering
 \includegraphics[
 width=0.7\textwidth,keepaspectratio]{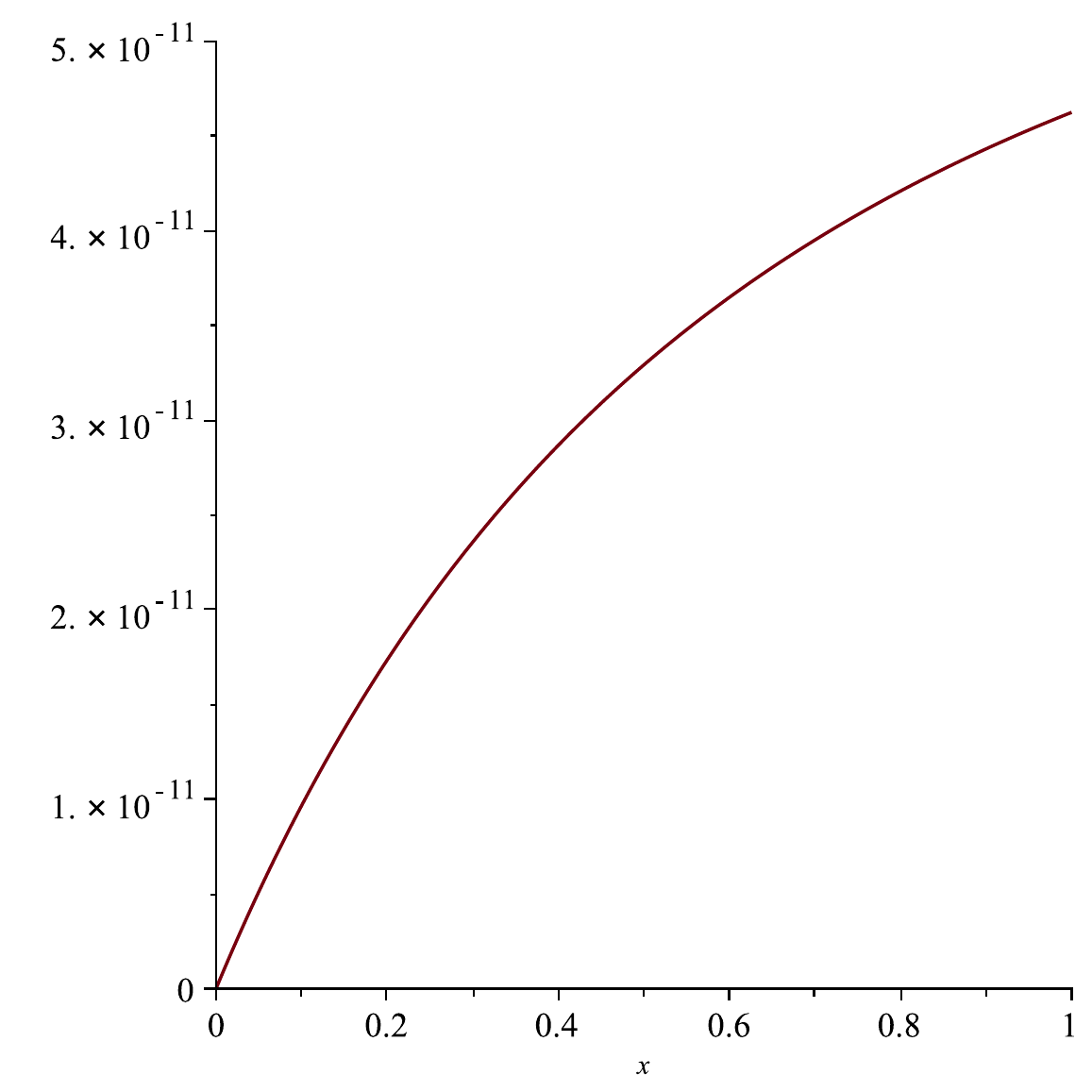}
 \caption{Graph of $|\mathrm{R}_{4.2}(5,20.8,x)-1|$ for  $0\leq x \leq 1$.}
 \label{fig:Fig7}
\end{figure}

\begin{figure}[H]
\centering
\includegraphics[
width=0.7\textwidth,keepaspectratio]{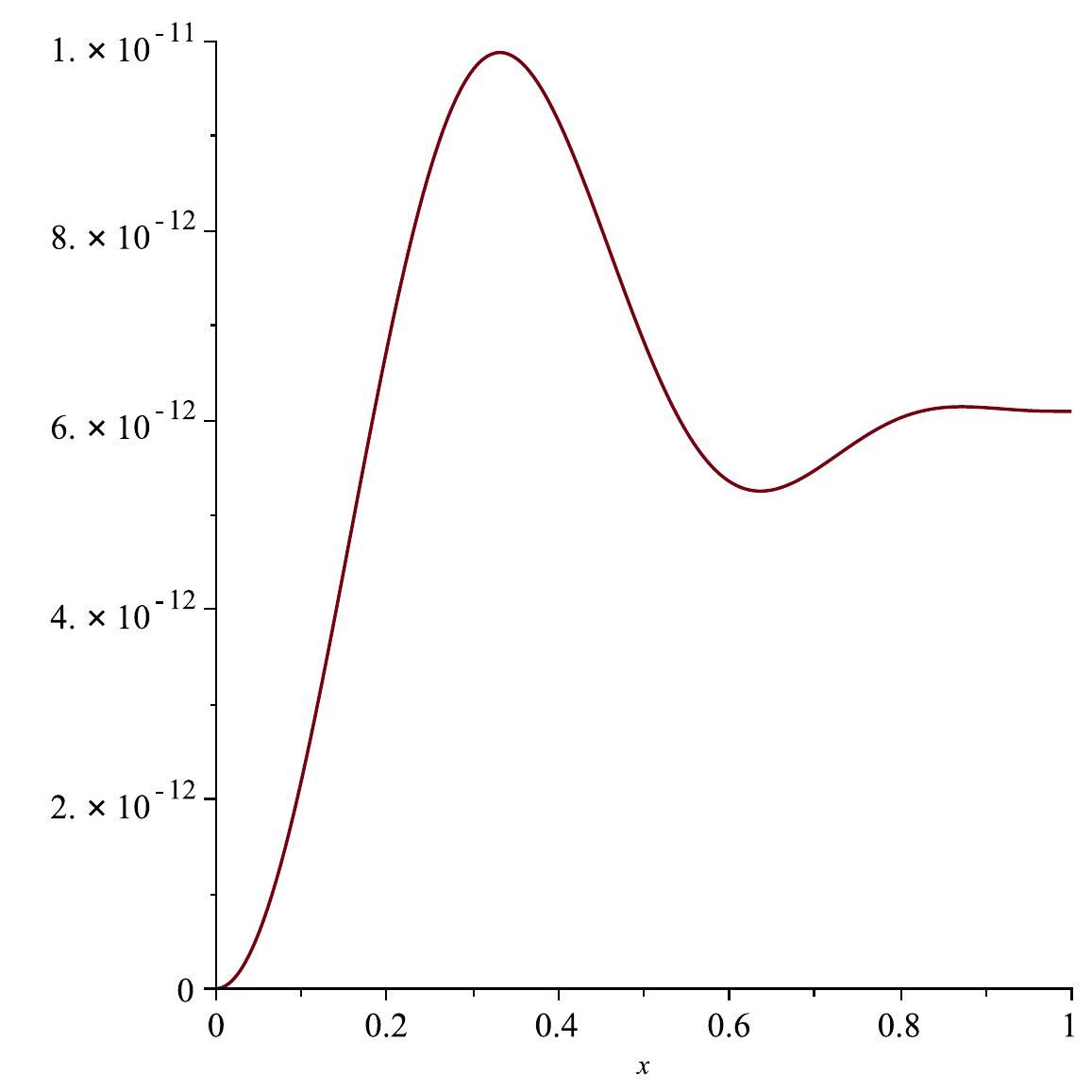}
\caption{Graph of $|\mathrm{S}_{4.8}^{20.3}(10,x)-1|$ for $0\leq x \leq 1$.}
\label{fig:Fig8}
\end{figure}

\section*{Acknowledgments}
I thank the anonymous referees for helpful comments. Financial support from Ministerio de Ciencia e Innovaci\'on, Spain, project PID2021-127252-NB-I00 (MCIU/AEI/FEDER, UE) is acknowledged.

\bibliographystyle{siamplain}
\bibliography{biblio}

\begin{thebibliography}{10}

\bibitem{Boyd:1990:CSP}
{\sc W.~G.~C. Boyd}, {\em Asymptotic expansions for the coefficient functions
  associated with linear second-order differential equations: the simple pole
  case}, in Asymptotic and {C}omputational {A}nalysis (Winnipeg, MB, 1989),
  vol.~124 of Lecture {N}otes in {P}ure and {A}pplied {M}athematics, Dekker,
  New York, 1990, pp.~53--73, \url{https://doi.org/10.1201/9781003072584}.

\bibitem{Boyd:1986:TPS}
{\sc W.~G.~C. Boyd and T.~M. Dunster}, {\em Uniform asymptotic solutions of a
  class of second-order linear differential equations having a turning point
  and a regular singularity, with an application to {L}egendre functions}, SIAM
  J. Math. Anal., 17 (1986), pp.~422--450,
  \url{https://doi.org/10.1137/0517033}.

\bibitem{Cohl:2018:FSG}
{\sc H.~S. Cohl, T.~H. Dang, and T.~M. Dunster}, {\em Fundamental solutions and
  {G}egenbauer expansions of {H}elmholtz operators in {R}iemannian spaces of
  constant curvature}, SIGMA, 14 (2018),
  \url{https://doi.org/10.3842/SIGMA.2018.136}.

\bibitem{NIST:DLMF}
{\em {\it NIST Digital Library of Mathematical Functions}}.
\newblock Release 1.1.6 of 2022-06-30, \url{http://dlmf.nist.gov/}.
\newblock F.~W.~J. Olver, A.~B. {Olde Daalhuis}, D.~W. Lozier, B.~I. Schneider,
  R.~F. Boisvert, C.~W. Clark, B.~R. Miller, B.~V. Saunders, H.~S. Cohl, and
  M.~A. McClain, eds.

\bibitem{Dunster:1991:CFW}
{\sc T.~M. Dunster}, {\em Conical functions with one or both parameters large},
  Proc. Roy. Soc. Edinburgh Sect. A, 119 (1991), pp.~311--327,
  \url{https://doi.org/10.1017/S0308210500014864}.

\bibitem{Dunster:2003:ALF}
{\sc T.~M. Dunster}, {\em Uniform asymptotic expansions for associated
  {L}egendre functions of large order}, Proc. Roy. Soc. Edinburgh Sec. A, 133
  (2003), pp.~807--827, \url{https://doi.org/10.1017/S0308210500002687}.

\bibitem{Dunster:2004:CSP}
{\sc T.~M. Dunster}, {\em Convergent expansions for solutions of linear
  ordinary differential equations having a simple pole, with an application to
  associated {L}egendre functions}, Stud. Appl. Math., 113 (2004),
  pp.~245--270, \url{https://doi.org/10.1111/j.0022-2526.2004.01525.x}.

\bibitem{Dunster:2013:CFP}
{\sc T.~M. Dunster}, {\em Conical functions of purely imaginary order and
  argument}, Proc. Roy. Soc. Edinburgh Sect. A, 143 (2013), pp.~929--955.

\bibitem{Dunster:2020:LGE}
{\sc T.~M. Dunster}, {\em Liouville-{G}reen expansions of exponential form,
  with an application to modified {B}essel functions}, Proc. Roy. Soc.
  Edinburgh Sec. A, 150 (2020), pp.~1289--1311,
  \url{https://doi.org/10.1017/prm.2018.117}.

\bibitem{Dunster:2021:NKF}
{\sc T.~M. Dunster}, {\em Nield-{K}uznetsov functions and {L}aplace transforms
  of parabolic cylinder functions}, SIAM J. Math. Anal., 53 (2021),
  pp.~5915--5947, \url{https://doi.org/10.1137/21M1401590}.

\bibitem{Dunster:2024:AEG}
{\sc T.~M. Dunster}, {\em Uniform asymptotic expansions for {G}egenbauer
  polynomials and related functions via differential equations having a simple
  pole}, Constr. Approx., 59 (2024), p.~419–456,
  \url{https://doi.org/10.1007/s00365-023-09645-1}.

\bibitem{Dunster:2017:COA}
{\sc T.~M. Dunster, A.~Gil, and J.~Segura}, {\em Computation of asymptotic
  expansions of turning point problems via {C}auchy's integral formula: Bessel
  functions}, Constr. Approx., 46 (2017), pp.~645--675,
  \url{https://doi.org/10.1007/s00365-017-9372-8}.

\bibitem{Dunster:2021:SEB}
{\sc T.~M. Dunster, A.~Gil, and J.~Segura}, {\em Simplified error bounds for
  turning point expansions}, Anal. Appl., 19 (2021), pp.~647--678,
  \url{https://doi.org/10.1142/S0219530520500104}.

\bibitem{Frenzen:1990:EBU}
{\sc C.~L. Frenzen}, {\em Error bounds for a uniform asymptotic expansion of
  the {L}egendre function ${Q}_n^{-m}(\cosh z)$}, SIAM J. Math. Anal., 21
  (1990), pp.~523--535, \url{https://doi.org/10.1137/0521028}.

\bibitem{Gil:2000:CTF}
{\sc A.~Gil, J.~Segura, and N.~M. Temme}, {\em Computing toroidal functions for
  wide ranges of the parameters}, J. Comput. Phys., 161 (2000), pp.~204--217,
  \url{https://doi.org/10.1006/jcph.2000.6498}.

\bibitem{Jones:2001:AHF}
{\sc D.~S. Jones}, {\em Asymptotics of the hypergeometric function}, Math.
  Methods Appl. Sci., 24 (2001), pp.~369--389,
  \url{https://doi.org/10.1002/mma.208}.

\bibitem{Nemes:2020:LPA}
{\sc G.~Nemes and A.~B. Olde~Daalhuis}, {\em Large-parameter asymptotic
  expansions for the {L}egendre and allied functions}, SIAM J. Math. Anal., 52
  (2020), pp.~437--470, \url{https://doi.org/10.1137/19M1262498}.

\bibitem{Olver:1975:LFW}
{\sc F.~W.~J. Olver}, {\em Legendre functions with both parameters large},
  Philos. Trans. Roy. Soc. London Ser. A, 278 (1975), pp.~175--185,
  \url{https://doi.org/10.1098/rsta.1975.0024}.

\bibitem{Olver:1997:ASF}
{\sc F.~W.~J. Olver}, {\em Asymptotics and special functions}, AKP Classics, A
  K Peters Ltd., Wellesley, MA, 1997.
\newblock Reprint of the 1974 original [Academic Press, New York].

\bibitem{Shivakumar:1988:EBU}
{\sc P.~Shivakumar and R.~Wong}, {\em Error bounds for a uniform asymptotic
  expansion of the {L}egendre function ${P}_n^{-m}(\cosh z)$}, Q. Appl. Math.,
  46 (1988), pp.~473--488, \url{https://doi.org/10.1090/qam/963583}.

\bibitem{Sneddon:1972:UIT}
{\sc I.~Sneddon}, {\em The Use of Integral Transforms}, New York: McGraw-Hill,
  1972.

\bibitem{Temme:2015:AMF}
{\sc N.~M. Temme}, {\em Asymptotic methods for integrals}, vol.~6 of Series in
  Analysis, World Scientific Publishing Co. Pte. Ltd., Hackensack, NJ, 2015.

\bibitem{Ursell:1984:ILP}
{\sc F.~Ursell}, {\em Integrals with a large parameter: {L}egendre functions of
  large degree and fixed order}, Math. Proc. Camb. Philos. Soc., 95 (1984),
  pp.~367--380, \url{https://doi.org/10.1017/S0305004100061648}.

\bibitem{Wong:1989:AAI}
{\sc R.~Wong}, {\em Asymptotic Approximations of Integrals}, Academic Press
  Inc., Boston-New York, 1989.

\end{thebibliography}

\end{document}